\renewcommand*\libertine@figurestyle{LF}
\renewcommand*\libertine@figurestyle{OsF}
\def\csname ver@etex.sty\endcsname{3000/12/31}
\crefname{lemma}{lemma}{lemmata}
\Crefname{lemma}{Lemma}{Lemmata}
\crefname{subsection}{subsection}{subsections}
\Crefname{subsection}{Subsection}{Subsections}
\newtheorem{theorem}{Theorem}[section]
\newtheorem{lemma}[theorem]{Lemma}
\newtheorem{proposition}[theorem]{Proposition}
\theoremstyle{definition}
\newtheorem{definition}[theorem]{Definition}
\newtheorem{remark}[theorem]{Remark}
\newtheorem{notation}[theorem]{Notation}
\newcommand{\mb}[1]{\mathbb{#1}} 
\newcommand{\mf}[1]{\mathfrak{#1}}
\newcommand{\mc}[1]{\mathcal{#1}}
\newcommand{\N}{\mb{N}} 
\newcommand{\C}{\mb{C}} 
\newcommand{\Z}{\mb{Z}}
\newcommand{\<}{\langle}
\renewcommand{\>}{\rangle}
\DeclareMathOperator{\ad}{ad}
\newcommand{\bvs}[4]{\varsigma ' \left(\begin{smallmatrix} #1 & #2 \\ #3 & #4 \end{smallmatrix}\right)}
\newcommand{\dcor}[1]{\left\langle #1 \right\rangle }
\begin{document}
\title[Wall-crossing formulae and strong polynomiality for mixed Hurwitz numbers]{Wall-crossing formulae and strong piecewise polynomiality for mixed Grothendieck dessins d'enfant, monotone, and double simple Hurwitz numbers}
\author[M.~A.~Hahn]{Marvin Anas Hahn}
\address{M.~A.~H.: Mathematisches Institut, Universit\"at T\"ubingen, Auf der Morgenstelle 10, 72076 T\"ubingen, Germany.}
\email{marvinanashahn@gmail.com}
\author[R.~Kramer]{Reinier Kramer}
\address{R.~K.: Korteweg-de Vries Instituut voor Wiskunde, Universiteit van Amsterdam, P.O. Box 94248, 
1090 GE Amsterdam, Netherlands.}
\email{r.kramer@uva.nl}
\author[D.~Lewanski]{Danilo Lewanski}
\address{D.~L.: Max Planck Institut f\"{u}r Mathematik, Vivatsgasse 7, 53111 Bonn, Germany.}
\email{ilgrillodani@mpim-bonn.mpg.de}
\thanks{}

\begin{abstract}
We derive explicit formulae for the generating series of mixed Grothendieck dessins d'enfant/monotone/simple Hurwitz numbers, via the semi-infinite wedge formalism. This reveals the strong piecewise polynomiality in the sense of Goulden--Jackson--Vakil, generalising a result of Johnson, and provides a new explicit proof of the piecewise polynomiality of the mixed case. Moreover, we derive wall-crossing formulae for the mixed case. These statements specialise to any of the three types of Hurwitz numbers, and to the mixed case of any pair.
\end{abstract}
\maketitle
\tableofcontents


\section{Introduction}
Hurwitz numbers have been introduced by Adolf Hurwitz in \cite{Hurwitz} as the count of  genus $g$, degree $d$ branched coverings of the Riemann sphere with given ramification profiles over a number of given fixed points. In the last two decades there have been many developments in the theory of Hurwitz numbers in different branches of mathematics and physics, including enumerative algebraic geometry, differential geometry, tropical geometry, combinatorics, representation theory,  integrable systems, and random matrix models. For a recent textbook on Hurwitz theory we refer to \cite{CM}.\par
Some Hurwitz numbers have proven to be of particular interest. Double simple Hurwitz numbers $h_{g, \mu, \nu}$ count coverings of genus $g$ and degree $d$ of the Riemann sphere with two fixed ramification profiles $\mu$ and $\nu$ over the points $0$ and $\infty$, and over the other $b$ fixed points on $\mathbb{P}^1$, the ramification profile must be simple. Hence $\mu, \nu$ are partitions of $d$ and, by the Riemann-Hurwitz formula, $b = 2g - 2 +\ell(\mu) + \ell(\nu)$.  There exist several modifications of the condition of simplicity on the intermediate ramifications, whose corresponding numbers also provide rich structures.\par
Two of those will be important in this paper. Labelling the sheets of the covering from $1$ to $d$, every intermediate simple ramification corresponds to a transposition $(a_i, b_i)_{i=1, \dots, b}$ that can be written such that $a_i < b_i$. The double monotone Hurwitz numbers are defined as the same count of the double simple Hurwitz numbers with the extra requirement that the coverings should satisfy the condition $b_i \leq b_{i+1}$. The strictly monotone Hurwitz case requires that $b_i < b_{i+1}$. For each of these three definitions, specialising $\nu$ to the trivial partition $(1^d)$ one obtains the single version of the corresponding double Hurwitz number.

\subsubsection*{Single simple Hurwitz numbers}

It was observed in \cite{Goulden1997} that the Hurwitz numbers in genus zero exhibit polynomiality in the entries of the partition $\mu$, up to a combinatorial prefactor. The generalisation in any genus could later be derived from the celebrated  Ekedahl--Lando--Shapiro--Vainshtein (ELSV) formula \cite{ekedahl2001hurwitz}, that expresses single simple Hurwitz numbers in terms of the intersection theory of the Deligne--Mumford compactification of the moduli spaces of curves. It has moreover been proved that the single simple Hurwitz numbers satisfy the topological recursion \cite{EMS} in the sense of Chekhov--Eynard--Orantin (CEO) \cite{EO}. It is a general fact \cite{DOSS, E} that numbers satisfying CEO recursion admit an expression in terms of the intersection theory of the moduli space of curves, although this expression may be hard to derive.

\subsubsection*{Double simple Hurwitz numbers}
In \cite{Okounkov}, Okounkov proved that the double simple Hurwitz numbers can be expressed in terms of the semi-infinite wedge formalism by exhibiting explicit operators. This rephrasing implies a relation to integrable systems of Kadomtsev-Petviashvili (KP) type---more precisely, the partition function of the double Hurwitz numbers is a tau-function of the KP integrable hierarchy.\par
A combinatorial approach to the double simple Hurwitz numbers appears in the foundational paper of Goulden, Jackson, and Vakil  \cite{GJV}, in which it is proved that the double simple Hurwitz numbers are piecewise polynomial in the entries of $\mu$ and $\nu$. Roughly speaking, relative conditions on $\mu$ and $\nu$ determine hyperplanes (walls) in the configuration space of these partitions. The complement of the walls is divided in several distinct connected components, which are called chambers. The piecewise polynomiality property means that, inside each chamber, there exist a polynomial depending on the chamber whose evaluations at the entries of $\mu$ and $\nu$ coincide with the Hurwitz numbers under examination. Moreover, in the same paper they proposed a conjecture of strong piecewise polynomiality,  proposing a lower bound on the degree of the polynomial. This lower bound is considered an indication of the connection with intersection theory of moduli spaces, as it shows up as consequence of the ELSV formula in the case of single simple Hurwitz numbers.\par
The chamber structure and wall-crossing formulae in genus zero for double Hurwitz numbers have been studied with algebro-geometric methods by Shadrin, Shapiro, and Vainshtein \cite{SSV}. A tropical approach to double Hurwitz numbers has been developed by Cavalieri, Johnson, and Markwig \cite{CJM}. This approach led the same authors to determine the chamber structure and wall crossing formulae in any genera \cite{CJMa}. Finally, the strong piecewise polynomiality conjecture has been proved by Johnson in \cite{johnson2015}. He used the operator language of \cite{Okounkov} to derive an explicit algorithm to compute the chamber polynomials and the wall-crossing formulae. A precise conjecture concerning CEO topological recursion for double Hurwitz numbers appears in \cite{NuovoNormanMax}, whereas an ELSV formula for double Hurwitz numbers remains an open problem.

 \subsubsection*{Monotone Hurwitz numbers}
The monotone Hurwitz numbers have been introduced in \cite{GGN14} as a combinatorial interpretation of the asymptotic expansion of the Harish-Chandra--Itzykson--Zuber (HCIZ) random matrix model. The CEO topological recursion for the simple case was proved in \cite{do2014topological} and its generalisation to the orbifold case appears in \cite{DK}. An ELSV formula for the simple case is derived in \cite{ALS} and \cite{DK}, whereas an ELSV formula for the double case is still an active topic of research. A tropical approach for the monotone case is developed in \cite{DK} and in \cite{Marvinmixed}.

\subsubsection*{Grothendieck dessins d'enfant or strictly monotone Hurwitz numbers}
Dessins d'enfant have been introduced by Grothendieck in \cite{Grothendieck}. Their enumeration counts Hurwitz coverings of genus $g$ and degree $d$ over the Riemann sphere, with two ramifications $\mu$ and $\nu$ over $0$ and $\infty$, and a single further ramification over $1$, whose length is determined by the Riemann--Hurwitz formula.\par
The CEO recursion for the $r=2$ orbifold case (i.e. $\nu = (2)^{d/2}$) was proved in \cite{PaulRibbon,DMSS} and is known as enumeration of ribbon graphs. In \cite{DoManescu} the CEO recursion was conjectured for the general $r$-orbifold case (i.e. $\nu = (r)^{d/r}$), which was then proved in \cite{DOPS} by combinatorial methods. Moreover, CEO recursion was proved in \cite{KazarianZograf} for the case of two consecutive intermediate ramifications of fixed lengths instead of one, together with a proof of the KP integrability and the Virasoro constraints for the same case. ELSV formulae for simple, orbifold or double cases are still not known. The connection between strictly monotone numbers and dessins d'enfant counting is explained in the following. 

\subsubsection*{Mixed cases} It is natural to interpolate several Hurwitz enumerative problems, by allowing different conditions on different blocks of intermediate ramifications. 
In fact hypergeometric tau functions for the 2D Toda integrable hierarchy have been proved to have several explicit combinatorial interpretations \cite{HO2}---one of them is in terms of mixed double strictly monotone/weakly monotone Hurwitz numbers, another one involves a mixed case of combinatorial problems, in which the part relative to the strictly monotone ramifications can be interpreted in terms of Grothendieck dessins d'enfant. This implies indirectly that the enumeration of Grothendieck dessins and strictly monotone numbers coincide. A direct proof of this fact through the Jucys correspondence \cite{Jucys} is derived in \cite{ALS}.\par
A combinatorial study of the mixed double monotone--simple case can be found in \cite{zbMATH06586291}, in which piecewise polynomiality is proved. 
A tropical interpretation providing an algorithm to compute the chamber polynomials and wall-crossing formulae via Erhart theory is developed in \cite{Marvinmixed}. Further developments on CEO topological recursion for general Hurwitz enumerative geometric problems appear in \cite{alexandrov2016weighted}. This study confirms the existence of an ELSV-type formula for mixed Hurwitz enumerative problems.

 \subsection{Results}
We derive explicit formulae for the generating functions of mixed double Grothendieck/\allowbreak{}monotone/\allowbreak{}simple  Hurwitz numbers. As a corollary, this provides the strong polynomiality statement for the mixed monotone/simple case (generalising a result of \cite{johnson2015}), and furthermore its generalisation to the mixed monotone/Grothendieck/simple case. In particular, this provides a new explicit proof of the piecewise polynomiality of the mixed case, and the obtained expressions allow us to derive wall-crossing formulae. These results specialise to the three types of Hurwitz numbers and to the mixed case of any pair, hence in particular this generalises the wall-crossing formulae derived in \cite{Marvinmixed}.

Our methods rely on the application of the algorithm introduced by Johnson in \cite{johnson2015}, that we taylor slightly for our use. The new key ingredients to run the algorithm in this case are the operators for the monotone and the strictly monotone ramifications, derived in \cite{ALS}.

\subsection{Organisation of the paper}
In \cref{sec:preliminaries}, we define the Hurwitz numbers used and give their operator representation in the semi-infinite wedge formalism. In section \ref{sec:Johnson} we recall Johnson's algorithm and adapt it to our purpose, deriving our first main result, \cref{theorem:Joh_alg}. The main structural results of the paper are described in the last three sections. In section \ref{sec:piecewise} we apply \cref{theorem:Joh_alg} to obtain piecewise polynomiality results. \Cref{piecewisemonotone} deals with the cases of double monotone and double Grothendieck's Hurwitz numbers, and \cref{lowestdegree} checks whether the lowest degree in the polynomials is really non-zero. In \cref{piecewisemixed} we treat the mixed monotone/Grothendieck/simple case, and we derive the strong piecewise polynomiality statement. Section \ref{sec:wall} is devoted to the derivation of the wall-crossing formulae. Finally, \cref{HypGeomTau} gives a polynomiality result for hypergeometric tau functions of the 2D-Toda hierarchy.

\subsection{Acknowledgements} 
We are indebted to  H.~Markwig for her careful proofreading and many helpful suggestions on an early version of this paper. We thank G.~Borot, N.~Do, E.~Garcia-Failde, M.~Karev, A.~Popolitov, and S.~Shadrin for useful discussions, and the Institut Henri Poincar\'{e} in Paris that hosted the trimester ``\textit{Combinatorics and interactions}'', where this collaboration started.
 R. K. and D. L. want to thank S.~Shadrin for introducting them to the semi-infinite wedge formalism. M. A. H. gratefully acknowledges partial support by DFG SFB-TRR 195 ``Symbolic tool in mathematics and their applications'', project A 14 ``Random matrices and Hurwitz numbers'' (INST 248/238-1). R. K. and D. L. were supported by a VICI grant of the Netherlands Organization for Scientific Research.


\section{Hurwitz numbers and the semi-infinite wedge formalism}\label{sec:preliminaries}

In this section, we recall the basic notions required for our work, for which we need the following conventions. We write \( \Z' \coloneqq \Z + \frac{1}{2} \). For partitions \( \mu, \nu \), we set \( m \coloneqq \ell (\mu ) \), and \( n \coloneqq \ell (\nu )\). We also define the functions \( \varsigma (z) \coloneq e^{z/2} -e^{-z/2} \) and \( \mc{S} (z) \coloneq \frac{\varsigma (z)}{z} \).

\subsection{Triply mixed Hurwitz numbers}
Initially, Hurwitz numbers were introduced as topological invariant counting ramified coverings between Riemann spheres.
\begin{definition}[Double simple Hurwitz numbers]
\label{def:hurwitz}
 Let $d$ be a positive integer, $\mu,\nu$ two ordered partitions of $d$ and let $g$ be a non-negative integer. Moreover, let $q_1,\dots,q_b$ be distinct points in $\mathbb{P}^1$, where $b=2g-2+m+n$. We define a \emph{simple Hurwitz cover} of type $(g,\mu,\nu)$ to be a map $\pi:C\to\mathbb{P}^1$, such that:
  \begin{enumerate}
  \item $C$ is a (not necessarily connected) genus $g$ curve;
  \item $\pi$ is a degree $d$ map, with ramification profile $\mu$ over $0$, $\nu$ over $\infty$, and $(2,1,\dots,1)$ over $q_i$ for all $i=1,\dots,b$;
  \item $\pi$ is unramified everywhere else;
  \item the pre-images of $0$ and $\infty$ are labeled, such that the point labeled $i$ in $\pi^{-1}(0)$ (respectively $\pi^{-1}(\infty)$) has ramification index $\mu_i$ (respectively $\nu_i)$.  
 \end{enumerate}
 A simple Hurwitz cover is \emph{connected} if its domain is.\par
We define an isomorphism between two covers $\pi_1:C_1\to\mathbb{P}^1$ and $\pi_2:C_2\to\mathbb{P}^1$ to be a morphism $\varphi:C_1\to C_2$ respecting the labels, such that the following diagram commutes:
\begin{equation}
\begin{tikzcd}
C_1 \arrow[r, "\varphi"] \arrow[d, "\pi_1"]
& C_2 \arrow[d, "\pi_2"] \\
\mathbb{P}^1 \arrow[r, "\mathrm{id}"]
& \mathbb{P}^1
\end{tikzcd}\,.
\end{equation}
Then we define the \emph{(not neccessarily connected) double simple Hurwitz numbers} as follows: 
\begin{equation}
h_{g;\mu,\nu}^\bullet =\sum \frac{1}{|\mathrm{Aut}(\pi)|}\,,
\end{equation}
where the sum goes over all isomorphism classes of Hurwitz covers of type $(g,\mu,\nu)$. This number does not depend on the position of the $q_i$. The degree is implicit in the notation $h_{g;\mu,\nu}$, as $d=\sum \mu_i=\sum \nu_j$. The number $b$ of simple branch points is determined by the Riemann-Hurwitz formula, so $b=2g-2+m+n$ as above.\par
We define the \emph{connected double simple Hurwitz numbers} \( h^\circ_{g;\mu,\nu}\) in the same way, but summing only over connected Hurwitz covers.
\end{definition}
Generically, the numbers \( h^\bullet_{g;\mu,\nu} \) and \( h^\circ_{g;\mu,\nu}\) agree: disconnected covers only exist if there are non-trivial subpartitions of \( \mu \) and \( \nu \) of equal size, corresponding to the ramification profiles of one of the connected components. This condition defines a number of codimension one subspaces in the space of all pairs of partitions, see \cref{hyparr}. As most of this paper considers Hurwitz numbers outside this subspace, we will often neglect mentioning whether we consider connected or disconnected Hurwitz numbers.\par
\vspace{11pt}
Hurwitz numbers can also be defined via decompositions of the identity in the symmetric group. For $\sigma\in S_d$, we denote its cycle type by $\mathcal{C}(\sigma)\vdash d$. We define the following factorisation counting problem in the symmetric group:
\begin{definition}[Factorisations in the symmetric group]
\label{def:hursym}
Let $d,g,\mu,\nu$ be as in \cref{def:hurwitz}. We call $\left(\sigma_1,\tau_1,\dots,\tau_b,\sigma_2\right)$ a \emph{factorisation of type $(g,\mu,\nu)$} if:
\begin{enumerate}
\item $\sigma_1,\ \sigma_2,\ \tau_i\in\mathcal{S}_d$;
\item $\sigma_2\cdot\tau_b\cdot\dots\cdot\tau_1\cdot\sigma_1=\mathrm{id}$;
\item $b=2g-2+m+n$;
\item $\mathcal{C}(\sigma_1)=\mu,\ \mathcal{C}(\sigma_2)=\nu$ and $\mathcal{C}(\tau_i)=(2,1,\dots,1)$;
\item the disjoint cycles of $\sigma_1$ and $\sigma_2$ are labeled, such that the cycle $i$ has length $\mu_i$.
\end{enumerate}
We denote the set of all factorisations of type $(g,\mu,\nu)$ by $\mathcal{F}(g,\mu,\nu)$.
\end{definition}
A well-known fact is the following theorem, which is essentially due to Hurwitz.
\begin{theorem}
Let $g,\mu,\nu$ and $h_{g;\mu,\nu}$ and $\mathcal{F}(g,\mu,\nu)$ be as in the previous definition. Then
\begin{equation}
h _{g;\mu,\nu}=\frac{1}{d!}\left|\mathcal{F} (g,\mu,\nu)\right|\,.
\end{equation}
\end{theorem}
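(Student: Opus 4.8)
The plan is to use the classical monodromy correspondence supplied by the Riemann existence theorem, and then to produce the normalising factor $\tfrac{1}{d!}$ through an orbit--stabiliser computation. First I would fix the $b+2$ branch points $0,\infty,q_1,\dots,q_b$, choose a base point $\ast\in\P^1$ away from them, and identify the central fibre $\pi^{-1}(\ast)$ with $\{1,\dots,d\}$. Choosing standard loops $\gamma_0,\gamma_\infty,\delta_1,\dots,\delta_b$ encircling the respective branch points, the fundamental group $\pi_1\bigl(\P^1\setminus\{0,\infty,q_1,\dots,q_b\},\ast\bigr)$ is generated by them subject to the single relation $\gamma_\infty\,\delta_b\cdots\delta_1\,\gamma_0=1$. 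Lifting paths along a degree $d$ cover yields the monodromy homomorphism, sending each loop to the permutation of the fibre induced by analytic continuation; writing $\sigma_1,\tau_1,\dots,\tau_b,\sigma_2$ for the images of $\gamma_0,\delta_1,\dots,\delta_b,\gamma_\infty$ turns the group relation into $\sigma_2\cdot\tau_b\cdots\tau_1\cdot\sigma_1=\id$. Since the local ramification profile over a branch point equals the cycle type of the corresponding local monodromy, the profiles $\mu$, $\nu$, $(2,1,\dots,1)$ translate into $\mathcal{C}(\sigma_1)=\mu$, $\mathcal{C}(\sigma_2)=\nu$, $\mathcal{C}(\tau_i)=(2,1,\dots,1)$. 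The Riemann existence theorem guarantees that this construction is a bijection between isomorphism classes of degree $d$ covers carrying a labelling of $\pi^{-1}(\ast)$ and tuples of permutations of $\{1,\dots,d\}$ satisfying these conditions and the product-one relation.

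Next I would transport the remaining labels. The points of $\pi^{-1}(0)$ correspond canonically to the cycles of $\sigma_1$, with ramification index equal to cycle length, and likewise $\pi^{-1}(\infty)$ corresponds to the cycles of $\sigma_2$. Hence a labelling of the preimages of $0$ and $\infty$ compatible with the orderings of $\mu$ and $\nu$ (condition (4) of \cref{def:hurwitz}) is precisely a labelling of the cycles of $\sigma_1$ and $\sigma_2$ (condition (5) of \cref{def:hursym}). This upgrades the previous bijection to one between central-fibre-labelled covers equipped with the $0$- and $\infty$-labels and the set $\mathcal{F}(g,\mu,\nu)$.

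Finally I would quotient out the auxiliary choice of central-fibre labelling. The group $S_d$ acts on $\mathcal{F}(g,\mu,\nu)$ by simultaneous conjugation --- which corresponds exactly to relabelling $\pi^{-1}(\ast)$ --- and its orbits are the isomorphism classes of covers in the sense of \cref{def:hurwitz}, since a label-respecting isomorphism is the analytic incarnation of a conjugating permutation fixing each labelled cycle. The stabiliser of a tuple is the group of deck transformations preserving the $0$- and $\infty$-labels, which is canonically $\Aut(\pi)$, so by orbit--stabiliser the orbit of $[\pi]$ has cardinality $d!/|\Aut(\pi)|$. Summing over orbits then gives
\begin{equation}
\frac{1}{d!}\,\bigl|\mathcal{F}(g,\mu,\nu)\bigr|
=\frac{1}{d!}\sum_{[\pi]}\frac{d!}{|\Aut(\pi)|}
=\sum_{[\pi]}\frac{1}{|\Aut(\pi)|}
=h_{g;\mu,\nu},
\end{equation}
which is the claim; disconnected covers are automatically included, as no transitivity is imposed on the image of the monodromy. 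I expect the main obstacle to be the careful bookkeeping of the three layers of labelling (central fibre, $0$-fibre, $\infty$-fibre) against the automorphisms: one must verify that the stabiliser under conjugation genuinely matches the label-respecting automorphism group, so that the factor $\tfrac{1}{d!}$ appears with the correct multiplicity even when $\mu$ or $\nu$ have repeated parts. The analytic input (Riemann existence) is classical and I would simply cite it.
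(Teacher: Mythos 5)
Your proof is correct: the paper itself gives no proof of this theorem, stating it as a well-known fact ``essentially due to Hurwitz,'' and your argument --- the monodromy/Riemann-existence correspondence for covers with a labelled central fibre, followed by an orbit--stabiliser count for the $S_d$-action by simultaneous conjugation, with stabilisers identified with label-preserving deck groups --- is precisely the classical proof underlying that attribution. The one subtlety you flag is resolved exactly as you indicate: the stabiliser of a labelled tuple is the centraliser of the monodromy image intersected with the subgroup fixing each labelled cycle of $\sigma_1$ and $\sigma_2$ setwise, and this is canonically the group of automorphisms of $\pi$ respecting the labels of $\pi^{-1}(0)$ and $\pi^{-1}(\infty)$, so the factor $d!/|\Aut(\pi)|$ per isomorphism class is correct even when $\mu$ or $\nu$ have repeated parts.
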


\begin{remark}
The connected double simple Hurwitz numbers $h^{\circ}_{g;\mu,\nu}$ may be computed in the symmetric group as well. This is accomplished by adding to \cref{def:hursym} the condition that the subgroup of $\mathcal{S}_d$ generated by $\sigma_1,\sigma_2,\tau_1,\dots,\tau_b$ acts transitively on the set $\{1,\dots,d\}$.
\end{remark}


The following definition is a natural generalisation of the notion of mixed Hurwitz numbers studied in \cite{zbMATH06586291}.

\begin{definition}[Triply mixed Hurwitz numbers]
Let $g$, $p$, $q$, $r$ be non-negative integers and let $\mu$ and $\nu$ be ordered partitions, such that $b \coloneq p+q+r=2g-2+m+n$. We call a tuple $(\sigma_1,\tau_1,\dots,\tau_b,\sigma_2)$, a \textit{triply mixed factorisation of type } $(g,\mu,\nu,p,q,r)$ if it is a factorisation of type $(g,\mu,\nu)$ and for $\tau_i=(r_i\;s_i)$, where $r_i>s_i$ we have
\begin{enumerate}
\item [(6)] $s_{i+1}\ge s_{i}$ for $i=p+1,\dots,p+q$,
\item [(7)] $s_{i+1}> s_{i}$ for $i=p+q+1,\dots,b$.
\end{enumerate}
We denote the set of all triply mixed factorisations of type $(g,\mu,\nu,p,q,r)$ by $\mathcal{F}_{p,q,r;\mu,\nu}^{(2),\le,<}$ and we define the \emph{triply mixed Hurwitz numbers}
\begin{align}
h_{p,q,r;\mu,\nu}^{(2),\le,<}\coloneq \frac{1}{d!}\left|\mathcal{F}_{p,q,r;\mu,\nu}^{(2),\le,<}\right|\,.
\end{align}
\end{definition}

\begin{remark}
Triply mixed Hurwitz numbers can be thought of as a two-dimensional combinatorial interpolation between different Hurwitz-type counts:
\begin{enumerate}
\item For $q=r=0$, we obtain the double simple Hurwitz numbers.
\item For $p=q=0$, we obtain the double strictly monotone Hurwitz numbers, denoted by $h^{<}_{g;\mu,\nu}$. In \cite{ALS}, it was proved that this number is equivalent to the Grothendieck dessins d'enfant count as explained in the introduction.
\item For $p=r=0$, we obtain the double monotone Hurwitz numbers, denoted by $h^{\le}_{g;\mu,\nu}$.
\end{enumerate}
Triply mixed Hurwitz numbers are a generalisation of the notion of mixed double Hurwitz numbers introduced in \cite{zbMATH06586291}, which corresponds to the one-dimensional interpolation between double simple and monotone Hurwitz numbers, i.e. $r=0$.
\end{remark}

It is natural to ask whether Hurwitz-type counts behave polynomially in some sense. In particular, we define the subspace 
\begin{equation}
\mathcal{H}(m,n)=\Big\{ (\underline{M},\underline{N})\, \Big| \, \underline{M} \in \mb{N}^m,\underline{N} \in \mb{N}^n \textrm{, such that} \medop\sum_{i=1}^m M_i = \medop\sum_{j=1}^n N_j \Big\} \subset \mb{N}^m \times \mb{N}^n \,,
\end{equation}
where $\underline{M} = (M_1, \dots, M_m)$ and $\underline{N} = (N_1, \dots, N_n)$ and view triply mixed Hurwitz numbers as a function in the following sense
\begin{equation}
h_{p,q,r}^{(2),\le,<}\colon \mathcal{H}(m,n)\to\mathbb{Q} \colon (\mu,\nu)\mapsto h_{p,q,r;\mu,\nu}^{(2),\le,<}\,.
\end{equation}
\begin{definition}\label{hyparr}
We define the \emph{hyperplane arrangement} $\mathcal{W}(m,n)\subset\mathcal{H}(m,n)$ induced by the family of linear equations $\sum_{i\in I}M_i=\sum_{j\in J}N_j$ for $I\subset[m]$, $J\subset[n]$, where the variables $M_i$ correspond to $\mathbb{N}^{m}$ and the variables $N_j$ correspond to $\mathbb{N}^{n}$. We call the hyperplanes induced by each equation \textit{the walls of the hyperplane arrangement} and the  sets of all \( (\underline{M},\underline{N})\) at the same side of each wall \textit{the chambers of the hyperplane arrangement}.
\end{definition}
Recall that in the chambers of the hyperplane arrangement, the connected and disconnected Hurwitz numbers agree.\par
In \cite{zbMATH06586291}, the following theorem was proved:

\begin{theorem}[\cite{zbMATH06586291}]
\label{theorem:mixhur}
Let $g$, $p$, $q$ be non-negative integers and let $\mu$ and $\nu$ be partitions such that $p+q=2g-2+m+n$. Then for each chamber $\mf{c}$ of $\mathcal{W}(m,n)$ there exists a polynomial $P_{p,q,0;\mu,\nu}^{(2),\le,<}\in\mathbb{Q}[M_i,N_j]$, such that\begin{align}
h_{p,q,0;\mu,\nu}^{(2),\le,<}=P_{p,q,0;\mu,\nu}^{(2),\le,<}
\end{align}
for all $(\mu,\nu)\in \mf{c}$.
\end{theorem}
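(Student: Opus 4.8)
The plan is to bypass the combinatorics of \cite{zbMATH06586291} and instead read off piecewise polynomiality directly from a semi-infinite wedge expression, in the spirit of Okounkov \cite{Okounkov} and Johnson \cite{johnson2015} (and systematised by the present paper). First I would reduce to the disconnected numbers: since connected and disconnected counts agree on the interior of every chamber of $\mc{W}(m,n)$ (see \cref{hyparr}), it is enough to produce a chamber-wise polynomial for the disconnected count, and that is the one admitting a clean operator formula. With $r=0$ (so $p$ simple and $q$ weakly monotone steps) I would write
\[
h^{(2),\le,<}_{p,q,0;\mu,\nu}\;=\;\frac{1}{\prod_i\mu_i\,\prod_j\nu_j}\,\Big\langle\,\prod_{i=1}^{m}\alpha_{\mu_i}\;\mc{F}_2^{\,p}\,\mc{G}_q\;\prod_{j=1}^{n}\alpha_{-\nu_j}\,\Big\rangle,
\]
where $\mc{F}_2$ is the cut-and-join operator for a simple branch point and $\mc{G}_q$ encodes $q$ weakly monotone steps.

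Both operators are diagonal in the basis $|\lambda\rangle$: the operator $\mc{F}_2$ acts by the sum of the contents of $\lambda$, and—by Jucys' theorem \cite{Jucys} identifying symmetric functions of the Jucys–Murphy elements with central elements—$\mc{G}_q$ acts by the complete homogeneous symmetric function $h_q$ evaluated at the contents of $\lambda$. The operator $\mc{G}_q$ is precisely the new ingredient furnished by \cite{ALS}, and substituting it for a power of $\mc{F}_2$ is the only structural change from the classical simple case.

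The technical core is to evaluate the vacuum expectation by commuting the creation operators $\alpha_{-\nu_j}$ to the right past $\mc{F}_2^{\,p}\mc{G}_q$. Passing through the generating operators $\mc{E}_a(z)$ and using
\[
[\mc{E}_a(z),\mc{E}_b(w)]\;=\;\varsigma(aw-bz)\,\mc{E}_{a+b}(z+w),
\]
the iterated commutators collapse the expectation into a finite sum, indexed by set partitions and pairings of the parts, of terms of the shape $\prod\varsigma(\text{linear form})\big/\prod(\text{linear form})$, each linear form being an integral combination $\sum_{i\in I}M_i-\sum_{j\in J}N_j$. The denominators are thus exactly the defining forms of the walls of $\mc{W}(m,n)$, while the $p$ simple branch points are recovered by a coefficient extraction in the auxiliary variables carried by the $\mc{E}_a(z)$ (using $\varsigma(z)=z\,\mc{S}(z)$ with $\mc{S}$ entire).

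Finally I would extract the statement. On the interior of a fixed chamber $\mf{c}$ no wall-form vanishes and the sign of each is constant, so the coefficient/residue extraction selects a fixed set of contributions, each restricting to a genuine polynomial in the $M_i,N_j$; summing the finitely many contributions yields the chamber polynomial $P^{(2),\le,<}_{p,q,0;\mu,\nu}\in\Q[M_i,N_j]$. The main obstacle is exactly this last bookkeeping: organising the iterated commutators so that the expectation visibly collapses to $\varsigma$-ratios with wall-form denominators, and then proving that the set of surviving residues is constant across the interior of each chamber—equivalently, that crossing a wall is the only event that can change the polynomial. Getting the monotone commutators coming from $\mc{G}_q$ into the same $\varsigma$-ratio normal form as the simple ones is the genuinely new point relative to \cite{johnson2015}.
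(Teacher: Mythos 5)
Your overall architecture is the same as the paper's: the theorem is recovered there as the $r=0$ specialisation of \cref{thm:mixedpiecewise}, proved by combining the diagonal operator of \cite{ALS} (your $\mc{G}_q$, acting by $h_q$ of the contents), the conjugation lemmata (\cref{lem:ConjF2,lem:ConjDE}), and Johnson's algorithm (\cref{theorem:Joh_alg}). However, your mechanism for why the answer is chamber-wise polynomial is not correct. The iterated commutators of the $\mc{E}$-operators never produce denominators equal to wall forms $\sum_{i\in I}M_i-\sum_{j\in J}N_j$: each commutation contributes a factor $\varsigma(\cdot)$ whose argument is bilinear in the energies (linear in $\mu,\nu$) and the auxiliary variables, and the \emph{only} denominator in the entire computation is the single $1/\varsigma(z_{[n]})$ (resp.\ $1/\varsigma(X\nu_{[n]}+y_{[n]}+z_{[n]})$) coming from the energy-zero operator at the end of each commutation pattern, which is cancelled by the $\varsigma$ produced in the last commutator. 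The walls enter in a different way: they are the loci where an intermediate operator $\mc{E}_{\mu_K-\nu_L}(\cdot)$ changes the sign of its energy, so what is constant on a chamber is the \emph{set of commutation patterns} $CP^{\mf{c}}$, i.e.\ the branching of the algorithm. Note also that your residue picture would not establish the claim even on its own terms: a rational function whose denominators are nonvanishing on the interior of a chamber is still not a polynomial there, so "no wall-form vanishes on $\mf{c}$" cannot by itself yield a polynomial $P\in\mathbb{Q}[M_i,N_j]$.

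There is a second, related gap: you never return to the prefactor $1/(\prod_i\mu_i\prod_j\nu_j)$, nor to the coefficients created by conjugating $\mc{G}_q$ past the $\alpha_{-\nu_j}$, namely the factorial ratios $(v_j+\nu_j-1)!/(\nu_j-1)!$ and the coefficients of $\mc{S}(z_j)^{\nu_j-1}$ (one must check these are polynomials in $\nu_j$, which the paper does). At best your argument shows that $\prod_i\mu_i\prod_j\nu_j\cdot h^{(2),\le,<}_{p,q,0;\mu,\nu}$ agrees with a chamber polynomial $Q$; the theorem asserts this for the Hurwitz number itself, and $Q/\prod_i M_i\prod_j N_j$ is a polynomial only after one proves divisibility. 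The paper's proof does exactly this: every $\mc{E}_{\mu_i}(0)$ must eventually be commuted against a negative-energy operator, producing a factor $\varsigma(\mu_i\,\cdot)$ and hence a factor $\mu_i$; and for each $j$, either $v_j\neq 0$, in which case the factorial ratio supplies a factor $\nu_j$, or $v_j=0$, in which case the operator $\mc{E}_{-\nu_j}(\cdot)$ yields a factor $\varsigma(\nu_j\,\cdot)$ in its first commutation. Without these divisibility arguments (and the polynomiality in $\nu_j$ of the factorial and $\mc{S}$-coefficients), the proposal does not reach the statement; with them added, it becomes precisely the paper's proof.
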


\subsection{Semi-infinite wedge formalism and the operators for Hurwitz numbers}\label{WedgeAndOperators}
We introduce the operators needed for the derivation of our results. For a self-contained introduction to the infinite wedge space formalism, we refer the reader to \cite{OP,johnson2015}, where most relevant objects are defined. 
\par
Let $V = \bigoplus_{i \in \Z'} \C \underline{i}$ be an infinite-dimensional complex vector space with a basis labeled by half-integers, written as \( \underline{i}\). The semi-infinite wedge space \( \mc{V} \coloneq \bigwedge^{\frac{\infty}{2}} V \) is the space spanned by vectors
\begin{equation}
\underline{k_1} \wedge \underline{k_2} \wedge \underline{k_3} \wedge \cdots
\end{equation}
such that for large \( i\), \( k_i + i -\frac{1}{2} \) equals a constant, called the charge, imposing that \( \wedge \) is antisymmetric. The charge-zero sector 
\begin{equation}
\mathcal{V}_0 = \bigoplus_{n \in \mathbb{N} } \bigoplus_{\lambda\,  \vdash\, n} \C v_{\lambda}
\end{equation}
is then the span of all of the semi-infinite wedge products \( v_{\lambda} = \underline{\lambda_1 - \frac{1}{2}} \wedge \underline{\lambda_2 - \frac{3}{2}} \wedge \cdots \) for integer partitions~$\lambda$. The space $\mathcal{V}_0$ has a natural inner product $(\cdot,\cdot )$ defined by declaring its basis elements to be orthonormal. 
The element corresponding to the empty partition $v_{\emptyset}$ is called the vacuum vector and denoted by $|0\rangle$. Similarly, we call the covacuum vector its dual in \( \mc{V}_0^* \), and denote it by $\langle0|$. If $\mathcal{P}$ is an operator acting on $\mathcal{V}_0$, we denote with $ \langle \mathcal{P}\rangle$ the evaluation $ \langle 0 |  \mathcal{P} |0\rangle$.

For $k$ half-integer, define the operator $\psi_k$ by $\psi_k : (\underline{i_1} \wedge \underline{i_2} \wedge \cdots) \ \mapsto \ (\underline{k} \wedge \underline{i_1} \wedge \underline{i_2} \wedge \cdots)$, and let $\psi_k^{\dagger}$ be its adjoint operator with respect to~$(\cdot,\cdot)$. The normally ordered products of $\psi$-operators
\begin{equation}
E_{i,j} \coloneqq \begin{cases}\psi_i \psi_j^{\dagger}, & \text{ if } j > 0 \\
-\psi_j^{\dagger} \psi_i & \text{ if } j < 0 \end{cases} 
\end{equation}
are well-defined operators on $\mathcal{V}_0$. 
For $n$ any integer, and $z$ a formal variable, define the operators
\begin{equation}
\mathcal{E}_n(z) = \sum_{k \in \Z + \frac12} e^{z(k - \frac{n}{2})} E_{k-n,k} + \frac{\delta_{n,0}}{\varsigma(z)}, \qquad \qquad \alpha_n = \mathcal{E}_n(0) = \sum_{k \in \Z + \frac12} E_{k-n,k}.
\end{equation}
Their commutation formulae are known to be
\begin{equation}
  \left[\mathcal{E}_a(z),\mathcal{E}_b(w)\right] =
\varsigma\left(\det  \left[
\begin{matrix}
  a & z \\
b & w
\end{matrix}\right]\right)
\,
\mathcal{E}_{a+b}(z+w), 
\qquad \qquad 
[\alpha_k, \alpha_l ] = k \delta_{k+l,0}.
\end{equation}
 We will also use the $\mathcal{E}$-operator without the correction in energy zero, i.e.
\begin{equation}
\tilde{\mathcal{E}}_0(z) = \sum_{k \in \Z + \frac12} e^{zk} E_{k,k} = \sum_{r=0}^{\infty} \mathcal{F}_r z^r, 
\qquad \qquad
 \mathcal{F}_r \coloneqq \sum_{k\in\Z+\frac12} \frac{k^r}{r!} E_{k,k} 
\end{equation}
The operator \( C = \mc{F}_0 \) is called the \emph{charge operator}, as its eigenvalues on basis vectors are given by the charge. In particular, it acts as zero on \( \mc{V}_0 \). The operator $E = \mathcal{F}_1$ is called the \emph{energy operator}. We are now ready to express the Hurwitz numbers in terms of the semi-infinite wedge formalism.\par
The monotone Hurwitz numbers have the following expression, derived in \cite{ALS}: 
\begin{equation}
h_{g;\mu,\nu}^{\le}=\frac{[u^b]}{\prod\mu_i\prod\nu_j}\left\langle \prod_{i=1}^m \alpha_{\mu_i} \mathcal{D}^{(h)}(u) \prod_{j=1}^n\alpha_{-\nu_j}\right\rangle\,,
\end{equation}
where the operator $\mathcal{D}^{(h)}(u)$ has the vectors $v_{\lambda}$ as eigenvectors with the generating series for the complete homogeneous polynomials $h$ evaluated at the content $\mathbf{cr}^{\lambda}$ of the Young tableau $\lambda$ as eigenvalues:
\begin{equation}
\mathcal{D}^{(h)}(u).v_{\lambda} = \sum_{v=0} h_v(\mathbf{cr}^{\lambda}) u^v v_{\lambda}\,.
\end{equation}
Remember that the content of a box \( (i,j) \) in the Young tableau of a partition is given by \( \mathbf{cr}_{(i,j)} = j-i \), and the content \( \mathbf{cr}^\lambda \) of a partition \( \lambda \) is the multiset of all contents of boxes in its Young diagram (\( \mathbf{cr} \) stands for \textbf{c}olumn-\textbf{r}ow). For example, the partition \( (3,2) \) has boxes \( (1,1)\), \( (1,2)\), \( (1,3)\), \((2,1)\), and \( (2,2) \), so \( \mathbf{cr}^{(3,2)} = \{ 0,1,2,-1,0\} \).\par 
Explicitly, the operator $\mathcal{D}^{(h)}$ can be expressed in terms of the operators $\mathcal{E}$ as
\begin{equation}
\mathcal{D}^{(h)}(u) = \exp\left( \left[ \frac{\tilde{\mathcal{E}}_0\left(u^2 \frac{d}{du}\right)}{\varsigma \left(u^2 \frac{d}{du}\right)} - E \right].\log(u) \right)\,.
\end{equation}
 Let $\mathcal{O}^{(h)}_{x}(u)$ indicate the conjugation $\mathcal{D}^{(h)}(u)\alpha_{x}\mathcal{D}^{(h)}(u)^{-1}$. Since $\mathcal{D}^{(h)}(u)^{-1}.|0\rangle = |0\rangle$, one can insert the operator $\mathcal{D}^{(h)}(u)^{-1}$ on the right and insert $1 = \mathcal{D}^{(h)}(u)\mathcal{D}^{(h)}(u)^{-1}$ between every consecutive pair of operators $\alpha_{-\nu_i}$, obtaining
\begin{equation}
h_{g;\mu,\nu}^{\le}=\frac{[u^b]}{\prod\mu_i\prod\nu_j}\left\langle \prod_{i=1}^m\mathcal{E}_{\mu_i}(0)\prod_{j=1}^n\mathcal{O}^{(h)}_{-\nu_j}(u)\right\rangle\,.
\end{equation}
The operators $\mathcal{O}^{(h)}$ have been computed in \cite{kramer2016quasi} to be equal to  
\begin{equation}
\mathcal{O}^{(h)}_{-\nu}(u)=\sum_{v=0}^{\infty}\frac{(v+\nu-1)!}{(\nu-1)!}[z^v]\mathcal{S}(uz)^{\nu-1}\mathcal{E}_{-\nu}(uz)\,,\label{eq:h_operators}
\end{equation}
so the result is the following lemma, which is the first key observation for this paper.

\begin{lemma}
\label{eq:h_as_wedge}
Let $g$ be a non-negative number, $\mu$ and $\nu$ partitions of the same positive integer. The monotone Hurwitz number corresponding to these data can be computed as
\begin{equation}
h_{g;\mu,\nu}^{\le} \! = \frac{[u^b]}{\prod \mu_i} \!\! \sum_{\substack{v \vdash b \\ \ell(v)=n}} \prod_{j=1}^n\frac{(v_j\! +\! \nu_j \!- \! 1)!}{\nu_j!}[z^{v_1}_1\dots z^{v_n}_n]\prod_{j=1}^n\mathcal{S}(uz_j)^{\nu_j-1}\left\langle\prod_{i=1}^m\mathcal{E}_{\mu_i}(0)\prod_{j=1}^n\mathcal{E}_{-\nu_j}(uz_j)\right\rangle\,.
\end{equation}
\end{lemma}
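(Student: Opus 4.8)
\section*{Proof proposal}

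The plan is to prove the lemma by directly substituting the closed form \eqref{eq:h_operators} for the operators $\mathcal{O}^{(h)}_{-\nu_j}(u)$ into the vacuum expectation
\[
h_{g;\mu,\nu}^{\le}=\frac{[u^b]}{\prod\mu_i\prod\nu_j}\left\langle \prod_{i=1}^m\mathcal{E}_{\mu_i}(0)\prod_{j=1}^n\mathcal{O}^{(h)}_{-\nu_j}(u)\right\rangle
\]
derived above, and then running a degree-counting argument that produces the partition constraint. First I would introduce a \emph{separate} formal variable $z_j$ for the $j$-th factor, writing
\[
\mathcal{O}^{(h)}_{-\nu_j}(u)=\sum_{v_j=0}^{\infty}\frac{(v_j+\nu_j-1)!}{(\nu_j-1)!}[z_j^{v_j}]\,\mathcal{S}(uz_j)^{\nu_j-1}\,\mathcal{E}_{-\nu_j}(uz_j)\,.
\]

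Next I would expand $\prod_{j=1}^n\mathcal{O}^{(h)}_{-\nu_j}(u)$ into a single sum over tuples $(v_1,\dots,v_n)$ of non-negative integers. The factorials, the coefficient-extraction operators $[z_j^{v_j}]$, and the scalar series $\mathcal{S}(uz_j)^{\nu_j-1}$ act only on the formal-variable dependence (with distinct variables for distinct factors), so they may be collected in front; only the operators $\mathcal{E}_{-\nu_j}(uz_j)$ retain their original ordering inside the product. Since the independent extractions $[z_j^{v_j}]$ combine into a multivariate $[z_1^{v_1}\cdots z_n^{v_n}]$, and the vacuum bracket is linear, pulling the scalars out yields
\[
h_{g;\mu,\nu}^{\le}=\frac{[u^b]}{\prod\mu_i\prod\nu_j}\sum_{v_1,\dots,v_n\ge 0}\prod_{j=1}^n\frac{(v_j+\nu_j-1)!}{(\nu_j-1)!}[z_1^{v_1}\cdots z_n^{v_n}]\prod_{j=1}^n\mathcal{S}(uz_j)^{\nu_j-1}\left\langle\prod_{i=1}^m\mathcal{E}_{\mu_i}(0)\prod_{j=1}^n\mathcal{E}_{-\nu_j}(uz_j)\right\rangle\,.
\]
The factor $1/\prod\nu_j$ is then absorbed into the factorials via $\nu_j\cdot(\nu_j-1)!=\nu_j!$, turning the coefficient into $\prod_j(v_j+\nu_j-1)!/\nu_j!$ as in the statement.

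The crucial—and only genuinely nontrivial—step is to explain why the summation collapses onto tuples with $\sum_j v_j=b$. This rests on a homogeneity observation: since $\nu_j\ge 1$, the operator $\mathcal{E}_{-\nu_j}(uz_j)=\sum_{k}e^{uz_j(k+\nu_j/2)}E_{k+\nu_j,k}$ carries no energy-zero correction and depends on $u$ only through the combination $uz_j$; the same holds for the scalar $\mathcal{S}(uz_j)^{\nu_j-1}$, while $\mathcal{E}_{\mu_i}(0)=\alpha_{\mu_i}$ is $u$-independent. Hence the operator $\prod_j\mathcal{E}_{-\nu_j}(uz_j)$, its vacuum expectation against $\prod_i\alpha_{\mu_i}$, and finally the whole summand are power series in the variables $uz_1,\dots,uz_n$, so every monomial has the shape $u^{a_1+\cdots+a_n}z_1^{a_1}\cdots z_n^{a_n}$. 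Extracting $[z_1^{v_1}\cdots z_n^{v_n}]$ therefore pins the accompanying power of $u$ to $\sum_j v_j$, and the outer $[u^b]$ annihilates every term with $\sum_j v_j\ne b$. This restricts the sum to tuples $(v_1,\dots,v_n)$ of non-negative integers with $\sum_j v_j=b$, written $v\vdash b$, $\ell(v)=n$.

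I expect this degree-matching argument to be the main obstacle, since it requires verifying that $u$ never appears unpaired with some $z_j$—a point that would fail had an energy-zero correction term survived. Everything else is bookkeeping following from linearity of the bracket, the commutativity of the scalar coefficient-extractions in distinct variables, and the elementary factorial identity; these combine to give precisely the claimed expression.
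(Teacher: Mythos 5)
Your proposal is correct and follows essentially the same route as the paper: the lemma is obtained there precisely by substituting the explicit form \eqref{eq:h_operators} of $\mathcal{O}^{(h)}_{-\nu_j}(u)$ into the conjugated correlator and expanding, with the restriction to tuples $v$ with $\sum_j v_j = b$ left implicit. Your homogeneity argument (every factor depends on $u$ only through $uz_j$, and no energy-zero correction with its $1/\varsigma$ pole appears, so $[u^b]$ forces $\sum_j v_j = b$) is exactly the bookkeeping the paper omits, and the absorption of $1/\prod\nu_j$ into the factorials is the same elementary identity.
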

Similarly, the strictly monotone Hurwitz numbers can be expressed in the same way, substituting for $\mathcal{D}^{(h)}$ the operator $\mathcal{D}^{(\sigma)}(u) := \mathcal{D}^{(h)}(-u)^{-1}$. This reads
\begin{equation}
h_{g;\mu,\nu}^{<}=\frac{[u^b]}{\prod\mu_i\prod\nu_j}\left\langle \prod_{i=1}^m\mathcal{E}_{\mu_i}(0)\prod_{j=1}^n\mathcal{O}^{(\sigma)}_{-\nu_j}(u)\right\rangle\,,
\end{equation}
where
\begin{equation}
\mathcal{O}^{(\sigma)}_{-\nu}(u)=\sum_{v=0}^{\nu}\frac{\nu!}{(\nu-v)!}[z^v]\mathcal{S}(uz)^{-\nu-1}\mathcal{E}_{-\nu}(uz)\,,\label{eq:sigma_operators}
\end{equation}
and we obtain the following lemma in a fashion analogous to \cref{eq:h_as_wedge}
\begin{lemma}
\label{eq:sigma_as_wedge}
Let $g$ be a non-negative number, $\mu$ and $\nu$ partitions of the same positive integer. The strictly monotone Hurwitz number corresponding to these data can be computed as
\begin{equation}
 h_{g;\mu,\nu}^{<}=\frac{[u^b]}{\prod\mu_i}\!\! \sum_{\substack{v \vdash b\\ 0 \leq v_j \leq \nu_j }}\prod_{i=1}^n\frac{(\nu_j-1)!}{(\nu_j-v_j)!}[z^{v_1}_1\dots z^{v_n}_n]\prod_{j=1}^n\mathcal{S}(uz_j)^{-\nu_j-1}\left\langle\prod_{i=1}^m\mathcal{E}_{\mu_i}(0)\prod_{j=1}^n\mathcal{E}_{-\nu_j}(uz_j)\right\rangle\,.
\end{equation}
\end{lemma}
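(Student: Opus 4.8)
The plan is to reproduce the derivation of \cref{eq:h_as_wedge} almost verbatim, the only change being that the monotone operator is replaced by its strictly monotone counterpart $\mathcal{O}^{(\sigma)}_{-\nu}$ of \cref{eq:sigma_operators}. First I would start from the operator expression
\begin{equation}
h_{g;\mu,\nu}^{<}=\frac{[u^b]}{\prod\mu_i\prod\nu_j}\left\langle \prod_{i=1}^m\mathcal{E}_{\mu_i}(0)\prod_{j=1}^n\mathcal{O}^{(\sigma)}_{-\nu_j}(u)\right\rangle
\end{equation}
and substitute \cref{eq:sigma_operators} into each of the $n$ factors $\mathcal{O}^{(\sigma)}_{-\nu_j}(u)$, taking care to use an \emph{independent} dummy variable $z_j$ in the $j$-th factor. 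Because each coefficient extraction $[z_j^{v_j}]$ lives in its own variable, the product over $j$ of the single sums $\sum_{v_j=0}^{\nu_j}$ combines into one sum over tuples $(v_1,\dots,v_n)$ with $0\le v_j\le\nu_j$, and the $n$ individual extractions merge into the joint extraction $[z_1^{v_1}\cdots z_n^{v_n}]$.

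Next I would move every scalar factor out of the bracket. The combinatorial coefficients $\nu_j!/(\nu_j-v_j)!$ and the powers $\mathcal{S}(uz_j)^{-\nu_j-1}$ are formal power series in $u$ and the $z_j$, hence commute with all operators and may be pulled outside $\langle\,\cdot\,\rangle$. Distributing the prefactor $1/\prod_j\nu_j$ across the product turns each coefficient $\nu_j!/(\nu_j-v_j)!$ into $(\nu_j-1)!/(\nu_j-v_j)!$, which is exactly the combinatorial weight appearing in the claimed formula. At this stage the expression agrees with the statement except for the precise description of the summation range.

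Finally I would justify the constraint $v\vdash b$ (with $\ell(v)=n$), that is $\sum_j v_j=b$. The key observation is that each variable $z_j$ enters the integrand only through the combination $uz_j$---both in $\mathcal{S}(uz_j)^{-\nu_j-1}$ and in $\mathcal{E}_{-\nu_j}(uz_j)$, while the factors $\mathcal{E}_{\mu_i}(0)$ carry no $u$ at all---so extracting $[z_1^{v_1}\cdots z_n^{v_n}]$ produces exactly the monomial $u^{v_1+\cdots+v_n}$ times a quantity independent of $u$. The outer extraction $[u^b]$ then selects precisely the tuples with $\sum_j v_j=b$, which together with $0\le v_j\le\nu_j$ is the range indicated in the statement. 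This bookkeeping of $u$-degrees is the only point requiring any care; everything else is a direct substitution parallel to the monotone case, so I expect no genuine obstacle.
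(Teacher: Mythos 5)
Your proposal is correct and follows exactly the route the paper intends: the paper's own proof of this lemma is just the remark that it is obtained ``in a fashion analogous to'' the monotone case, i.e.\ by substituting the explicit operator $\mathcal{O}^{(\sigma)}_{-\nu_j}(u)$ of \cref{eq:sigma_operators} into the correlator, pulling the scalar series $\mathcal{S}(uz_j)^{-\nu_j-1}$ and the factorial weights outside, absorbing $1/\prod\nu_j$ into the coefficients, and letting the $[u^b]$ extraction enforce $\sum_j v_j=b$. Your spelled-out version of this substitution, including the $u$-degree bookkeeping that every $z_j$ appears only through $uz_j$, is exactly the paper's (implicit) argument, so there is nothing to add.
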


\begin{remark}
As seen in \cite{OP}, the double simple Hurwitz number can be computed as \begin{equation}
h_{g;\mu,\nu}=\frac{[u^b]}{\prod\mu_i\prod\nu_j}\left\langle\prod_{i=1}^m\mathcal{E}_{\mu_i}(0)\prod_{j=1}^n\mathcal{E}_{-\nu_j}(u\nu_j)\right\rangle.
\end{equation}
We see that the double monotone and strictly monotone Hurwitz numbers are computed as linear combinations of vacuum expectations similar to the ones appearing in the equation for double simple Hurwitz numbers.
\end{remark}

\subsection{The triply mixed Hurwitz case}

In order to apply the semi-infinite wedge formalism and Johnson's algorithm to the triply mixed case, it is best to consider a generating function. By the previous discussion, we can express such a generation function for the triply mixed Hurwitz numbers as follows:
\begin{equation}
\sum_{p,q,r =0}^\infty h_{p,q,r;\mu, \nu}^{(2),\leq,<} \frac{X^p}{p!} Y^q Z^r = 
 \frac{1}{\prod\mu_i\prod\nu_j } \left\langle\prod_{i=1}^m\mathcal{E}_{\mu_i}(0) e^{X \mathcal{F}_2} \mathcal{D}^{(h)}(Y) \mathcal{D}^{(\sigma)}(Z)\prod_{j=1}^n\mathcal{E}_{-\nu_j}(0)\right\rangle\,.
\end{equation}
In the same way as in \cref{WedgeAndOperators}, we can rearrange the correlator as
 \begin{equation}\label{TriplyMixedGeneratingConj}
\frac{1}{\prod\mu_i\prod\nu_j } \left\langle\prod_{i=1}^m\mathcal{E}_{\mu_i}(0) \prod_{j=1}^n e^{X \mathcal{F}_2} \mathcal{D}^{(h)}(Y) \mathcal{D}^{(\sigma)}(Z)\mathcal{E}_{-\nu_j}(0) \mathcal{D}^{(\sigma)}(Z)^{-1} \mathcal{D}^{(h)}(Y)^{-1} e^{-X \mathcal{F}_2}\right\rangle\,.
\end{equation}

In order to use this expression, we should calculate the conjugations in this correlator. this we do in the following two lemmata.

\begin{lemma}\label{lem:ConjF2} 
The conjugation with the exponential of $\mathcal{F}_2$ acts on the operator $\mathcal{E}_{-\nu}(z)$ by shifting the variable $z$ by the opposite of the energy. Explicitly:
\begin{equation}
e^{u \mathcal{F}_2} \mathcal{E}_{-\nu}(A) e^{-u \mathcal{F}_2} = \mathcal{E}_{-\nu}(A + u\nu)\,.
\end{equation}
\end{lemma}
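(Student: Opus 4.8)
The plan is to conjugate by means of the standard adjoint (Hadamard) expansion
\begin{equation}
e^{u\mathcal{F}_2}\,\mathcal{E}_{-\nu}(A)\,e^{-u\mathcal{F}_2} = \sum_{n\ge 0}\frac{u^n}{n!}\,(\ad_{\mathcal{F}_2})^n\,\mathcal{E}_{-\nu}(A)\,,
\end{equation}
so that the entire computation reduces to understanding the single commutator $[\mathcal{F}_2,\mathcal{E}_{-\nu}(A)]$ and then iterating it. The reason to hope this is tractable is that $\mathcal{F}_2$ is an energy-type diagonal operator extracted from $\tilde{\mathcal{E}}_0$, and the bracket of any $\tilde{\mathcal{E}}_0$ against an $\mathcal{E}$-operator is governed by the $\mathcal{E}$-commutation formula already recorded in the excerpt.

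First I would write $\mathcal{F}_2 = [z^2]\,\tilde{\mathcal{E}}_0(z)$, using $\tilde{\mathcal{E}}_0(z)=\sum_r \mathcal{F}_r z^r$. Since $\tilde{\mathcal{E}}_0(z)=\mathcal{E}_0(z)-\varsigma(z)^{-1}$ and the scalar $\varsigma(z)^{-1}$ is central, I have $[\tilde{\mathcal{E}}_0(z),\mathcal{E}_{-\nu}(A)]=[\mathcal{E}_0(z),\mathcal{E}_{-\nu}(A)]$. Applying the $\mathcal{E}$-commutation relation with $a=0$, $b=-\nu$, $w=A$, and noting $\det\left[\begin{smallmatrix}0 & z\\ -\nu & A\end{smallmatrix}\right]=\nu z$, this gives
\begin{equation}
[\tilde{\mathcal{E}}_0(z),\mathcal{E}_{-\nu}(A)] = \varsigma(\nu z)\,\mathcal{E}_{-\nu}(A+z)\,.
\end{equation}
Extracting the coefficient of $z^2$ and combining $\varsigma(\nu z)=\nu z+O(z^3)$ with the Taylor expansion $\mathcal{E}_{-\nu}(A+z)=\sum_k \tfrac{z^k}{k!}\,\partial_A^k\mathcal{E}_{-\nu}(A)$, the only surviving term is $(\nu z)\cdot(z\,\partial_A\mathcal{E}_{-\nu}(A))$, whence
\begin{equation}
[\mathcal{F}_2,\mathcal{E}_{-\nu}(A)] = \nu\,\partial_A\,\mathcal{E}_{-\nu}(A)\,.
\end{equation}

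The key structural observation is that the right-hand side is again a $\partial_A$-derivative of the same operator, and $\mathcal{F}_2$ carries no dependence on the spectral parameter $A$, so $\ad_{\mathcal{F}_2}$ commutes with $\partial_A$. An immediate induction then yields $(\ad_{\mathcal{F}_2})^n\,\mathcal{E}_{-\nu}(A)=\nu^n\,\partial_A^n\,\mathcal{E}_{-\nu}(A)$, and feeding this back into the adjoint expansion resums into a Taylor shift,
\begin{equation}
e^{u\mathcal{F}_2}\,\mathcal{E}_{-\nu}(A)\,e^{-u\mathcal{F}_2} = \sum_{n\ge 0}\frac{(u\nu)^n}{n!}\,\partial_A^n\,\mathcal{E}_{-\nu}(A) = \mathcal{E}_{-\nu}(A+u\nu)\,,
\end{equation}
which is exactly the assertion.

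I expect the main difficulty to be bookkeeping rather than conceptual. One must treat $z$, $u$, and $A$ as formal variables and check that each manipulation — extracting $[z^2]$, exchanging $\ad_{\mathcal{F}_2}$ with $\partial_A$, and resumming the adjoint series — is legitimate at the level of formal power series of operators acting on $\mathcal{V}_0$, where on each basis vector only finitely many $E_{k,k}$ contribute and convergence is not an issue. The one point that genuinely needs care is the sign inside the determinant of the $\mathcal{E}$-commutation relation: it is precisely $\det\left[\begin{smallmatrix}0 & z\\ -\nu & A\end{smallmatrix}\right]=\nu z$ (rather than $-\nu z$) that produces the positive shift $+u\nu$. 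I would also confirm that the $\delta_{n,0}$ correction term plays no role here, since the output operator $\mathcal{E}_{-\nu}$ has index $-\nu\neq 0$ for a genuine part $\nu\ge 1$.
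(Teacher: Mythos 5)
Your proposal is correct, and while it shares the same outer skeleton as the paper's proof (both start from the Hadamard expansion $e^{u\ad_{\mathcal{F}_2}}\mathcal{E}_{-\nu}(A)=\sum_k \frac{u^k}{k!}\ad_{\mathcal{F}_2}^k\mathcal{E}_{-\nu}(A)$), the computational core is genuinely different. The paper immediately opens up the mode expansion $\mathcal{E}_{-\nu}(A)=\sum_{l\in\Z'}e^{A(l+\nu/2)}E_{l+\nu,l}$ and observes that each $E_{l+\nu,l}$ is an eigenvector of $\ad_{\mathcal{F}_2}$ with eigenvalue $\frac{(l+\nu)^2-l^2}{2}=\nu\bigl(l+\frac{\nu}{2}\bigr)$, so the exponential series resums mode-by-mode into the shifted exponential $e^{(A+u\nu)(l+\nu/2)}$. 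You instead never look inside the operator: you extract $\mathcal{F}_2=[z^2]\tilde{\mathcal{E}}_0(z)$, use the already-recorded commutation formula (with the correct determinant $\nu z$ and the correct observation that the central $\varsigma(z)^{-1}$ correction drops out) to get $[\mathcal{F}_2,\mathcal{E}_{-\nu}(A)]=\nu\,\partial_A\mathcal{E}_{-\nu}(A)$, and then resum $\ad_{\mathcal{F}_2}$ as the Taylor-shift operator $e^{u\nu\partial_A}$. Your route is more structural: it uses only the $\mathcal{E}$-commutation relation, so it would apply verbatim to any operator satisfying the same bracket, and it explains conceptually why conjugation by $e^{u\mathcal{F}_2}$ is a shift of the spectral parameter. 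The paper's route is more elementary and self-contained: it requires no care about exchanging $\ad_{\mathcal{F}_2}$ with $\partial_A$ or about coefficient extraction in a product of formal series, since everything reduces to scalar eigenvalue arithmetic on each mode. Both arguments are complete; your bookkeeping caveats (formal-series legitimacy, the sign in the determinant, the irrelevance of the $\delta_{n,0}$ correction since $-\nu\neq 0$) are exactly the right points to flag, and each checks out.
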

\begin{proof}
This is a fairly straightforward computation. By a standard Lie theory result, the left-hand side is equal to 
 \begin{align}
e^{u \mathcal{F}_2} \mathcal{E}_{-\nu}(A) e^{-u \mathcal{F}_2} &= e^{u\ad \mc{F}_2} \mc{E}_{-\nu}(A) = \sum_{k=0}^\infty \frac{u^k}{k!} \ad_{\mc{F}_2}^k \mc{E}_{-\nu}(A)\\
&= \sum_{k=0}^\infty \frac{u^k}{k!} \sum_{l\in \Z'} \Big( \frac{(l+\nu )^2-l^2}{2}\Big)^k e^{A(l+\frac{\nu}{2})} E_{l+\nu, l}\\
&= \sum_{l\in \Z'} \sum_{k=0}^\infty \frac{u^k (l\nu + \frac{\nu^2}{2})^k}{k!} e^{A(l+\frac{\nu}{2})} E_{l+\nu, l}\\
&= \sum_{l\in \Z'} e^{u\nu (l + \frac{\nu}{2})} e^{A(l+\frac{\nu}{2})} E_{l+\nu, l}= \sum_{l\in \Z'} e^{(A+u\nu)(l+\frac{\nu}{2})} E_{l+\nu, l}\,,
 \end{align}
 which coincides with the right-hand side by definition.
\end{proof}
\begin{remark}
For $A=0$, this lemma recovers \cite[Equation 2.14]{OP}.
\end{remark}

\begin{lemma}\label{lem:ConjDE}
\begin{align}
\mathcal{D}^{(h)}(u) \mathcal{E}_{-\nu}(A) \mathcal{D}^{(h)}(u)^{-1} & =\sum_{v=0}^{\infty}\frac{(v+\nu-1)!}{(\nu-1)!}[z^v]\mathcal{S}(uz)^{\nu-1}\mathcal{E}_{-\nu}(A + uz)\,;\\
\mathcal{D}^{(\sigma)}(u) \mathcal{E}_{-\nu}(A) \mathcal{D}^{(\sigma)}(u)^{-1} & =\sum_{v=0}^{\nu}\frac{\nu!}{(\nu-v)!}[z^v]\mathcal{S}(uz)^{-\nu-1}\mathcal{E}_{-\nu}(A + uz)\,.
\end{align}
\end{lemma}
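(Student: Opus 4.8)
The plan is to reduce both identities to the already-established $A=0$ cases. Indeed, since $\alpha_{-\nu} = \mathcal{E}_{-\nu}(0)$, the formulae \eqref{eq:h_operators} and \eqref{eq:sigma_operators} for $\mathcal{O}^{(h)}_{-\nu}(u)=\mathcal{D}^{(h)}(u)\alpha_{-\nu}\mathcal{D}^{(h)}(u)^{-1}$ and $\mathcal{O}^{(\sigma)}_{-\nu}(u)$ are precisely the two claimed right-hand sides specialised at $A=0$. It therefore suffices to understand how the extra argument $A$ propagates through the conjugation. The structural fact that makes this clean is that $\mathcal{D}^{(h)}(u)$, being assembled solely from $\tilde{\mathcal{E}}_0$ and $E=\mathcal{F}_1$, is a \emph{diagonal} operator on $\mathcal{V}_0$ (it scales each $v_\lambda$); the same holds for $\mathcal{D}^{(\sigma)}(u)=\mathcal{D}^{(h)}(-u)^{-1}$. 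In particular both commute with every diagonal operator, and hence with $\mathcal{F}_2$.

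First I would rewrite $\mathcal{E}_{-\nu}(A)$ as an $\mathcal{F}_2$-conjugate of $\alpha_{-\nu}$. Applying \cref{lem:ConjF2} with its parameter $u$ replaced by $A/\nu$ (legitimate since $\nu\geq 1$) to the case $A=0$ yields $e^{\frac{A}{\nu}\mathcal{F}_2}\,\alpha_{-\nu}\,e^{-\frac{A}{\nu}\mathcal{F}_2}=\mathcal{E}_{-\nu}(A)$. Substituting this into the left-hand side and using that $\mathcal{D}^{(h)}(u)$ commutes with $e^{\frac{A}{\nu}\mathcal{F}_2}$ lets me move the $\mathcal{D}^{(h)}$-conjugation inside:
\begin{equation}
\mathcal{D}^{(h)}(u)\,\mathcal{E}_{-\nu}(A)\,\mathcal{D}^{(h)}(u)^{-1} = e^{\frac{A}{\nu}\mathcal{F}_2}\,\bigl(\mathcal{D}^{(h)}(u)\,\alpha_{-\nu}\,\mathcal{D}^{(h)}(u)^{-1}\bigr)\,e^{-\frac{A}{\nu}\mathcal{F}_2} = e^{\frac{A}{\nu}\mathcal{F}_2}\,\mathcal{O}^{(h)}_{-\nu}(u)\,e^{-\frac{A}{\nu}\mathcal{F}_2}\,,
\end{equation}
into which the known expression \eqref{eq:h_operators} can now be inserted.

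It then remains to conjugate the right-hand side of \eqref{eq:h_operators} by $e^{\frac{A}{\nu}\mathcal{F}_2}$. As the coefficient extraction $[z^v]$ and the scalar factor $\mathcal{S}(uz)^{\nu-1}$ do not involve the wedge variables, this conjugation touches only $\mathcal{E}_{-\nu}(uz)$, and a second application of \cref{lem:ConjF2} (again with parameter $A/\nu$) sends $\mathcal{E}_{-\nu}(uz)$ to $\mathcal{E}_{-\nu}(uz+A)=\mathcal{E}_{-\nu}(A+uz)$, which is exactly the asserted formula. The strictly monotone identity follows by the verbatim same argument, with $\mathcal{D}^{(h)}$ replaced by $\mathcal{D}^{(\sigma)}$ and \eqref{eq:h_operators} replaced by \eqref{eq:sigma_operators}.

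The only genuinely delicate point is justifying that the conjugation may be performed termwise inside the formal sum over $v$ and the coefficient extraction in $z$. This is safe because \cref{lem:ConjF2} is an operator identity valid for each fixed value of the formal variable, and $\mathcal{F}_2$ commutes both with $[z^v]$ and with multiplication by the scalar series $\mathcal{S}(uz)^{\pm\nu\mp 1}$. I would also want to record explicitly the diagonality of $\mathcal{D}^{(h)}(u)$: for each fixed $u$ it is a function of the commuting family $\{E_{k,k}\}$, since $\tilde{\mathcal{E}}_0$ and $E$ are diagonal and the operator $u^2\frac{d}{du}$ acts on the scalar $\log(u)$. I expect this commutativity check to be the main, though mild, obstacle, with the remainder being routine bookkeeping.
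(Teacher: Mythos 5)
Your proof is correct, and it takes a genuinely different route from the paper's. The paper proves \cref{lem:ConjDE} by direct computation: its proof is essentially a pointer to \cite[Lemmata 4.1 and 4.3]{kramer2016quasi} (the sources of the $A=0$ formulae \cref{eq:h_operators,eq:sigma_operators}), asserting that those eigenvalue computations go through with the extra argument $A$ carried along. You instead treat the $A=0$ formulae as a black box and bootstrap the general case from them: you write $\mathcal{E}_{-\nu}(A)=e^{\frac{A}{\nu}\mathcal{F}_2}\,\alpha_{-\nu}\,e^{-\frac{A}{\nu}\mathcal{F}_2}$ via \cref{lem:ConjF2} with parameter $A/\nu$, use that $\mathcal{D}^{(h)}(u)$, $\mathcal{D}^{(\sigma)}(u)$ and $e^{\frac{A}{\nu}\mathcal{F}_2}$ are simultaneously diagonal on the basis $\{v_\lambda\}$ (hence commute), and then push the $\mathcal{F}_2$-conjugation through the sum over $v$, the coefficient extraction $[z^v]$ and the scalar factors $\mathcal{S}(uz)^{\pm\nu\mp 1}$ by linearity, where a second application of \cref{lem:ConjF2} shifts $\mathcal{E}_{-\nu}(uz)$ to $\mathcal{E}_{-\nu}(A+uz)$. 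All the ingredients are legitimately available in the paper: the diagonality of $\mathcal{D}^{(h)}(u)$ is part of its definition (it is specified by its eigenvalues on the $v_\lambda$), $\nu\geq 1$ makes $A/\nu$ a harmless rescaling of the formal variable, and there is no circularity, since the $A=0$ case is quoted from the literature rather than derived from the lemma being proved. What your route buys is independence from the internals of \cite{kramer2016quasi}: nothing has to be ``modified slightly''; only the quoted statements \cref{eq:h_operators,eq:sigma_operators} and the paper's own \cref{lem:ConjF2} are invoked, which makes the argument self-contained at the level of the paper and explains structurally why turning on $A$ is a pure translation of the argument of the $\mathcal{E}$-operator. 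What the paper's route buys is robustness: a direct computation does not require the conjugated operator to be expressible as an $\mathcal{F}_2$-conjugate of $\alpha_{-\nu}$, so it would also apply in settings where no such factorisation is available.
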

\begin{proof}
This is again a straightforward computation. It can obtained by modifying slightly the proofs of \cite[Lemmata 4.1 and 4.3]{kramer2016quasi}.
\end{proof}

By these two lemmata, we can express the generating function in \cref{TriplyMixedGeneratingConj} as a linear combination of correlators purely in terms of the $\mathcal{E}$-operators and we obtain the following proposition generalising \cref{eq:h_as_wedge,eq:sigma_as_wedge}.

\begin{proposition}
\label{eq:mixedhurwitzgen}
Let $g$ be a non-negative number, $\mu$ and $\nu$ partitions of the same positive integer, and $p$, $q$, $r$ non-negative integers, such that $p+q+r=b$. The triply mixed Hurwitz number corresponding to these data can be computed as
 \begin{align}
 h_{p,q,r;\mu, \nu}^{(2),\leq,<} &= p! [X^p Y^q Z^r] \sum_{v,w \in \N^n} \prod_{j=1}^n \frac{(\nu_j+v_j -1)!}{\mu_j (\nu_j-w_j)!} \cdot \\ 
&\qquad [\underline{y}^v \underline{z}^w] \prod_{j=1}^n \frac{\mc{S}(Yy_j)^{\nu_j-1}}{\mc{S}(Zz_j)^{\nu_j+1}} \bigg\< \prod_{i=1}^m \mc{E}_{\mu_i}(0) \prod_{j=1}^n \mc{E}_{-\nu_j} (X \nu_j +Yy_j+Zz_j) \bigg \> \,.
 \end{align}
 \end{proposition}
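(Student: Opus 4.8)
The plan is to start from the rearranged generating series in \cref{TriplyMixedGeneratingConj} and to evaluate, term by term, the conjugations appearing inside the correlator using the two lemmata just established. Each factor in the product $\prod_{j=1}^n$ is a conjugation of $\mc{E}_{-\nu_j}(0)$ by the single operator $e^{X\mc{F}_2}\mc{D}^{(h)}(Y)\mc{D}^{(\sigma)}(Z)$; applying the lemmata turns each such block into a scalar-weighted sum of shifted operators $\mc{E}_{-\nu_j}(\,\cdot\,)$, so the scalar weights pull out of the correlator and the computation reduces to evaluating one block, carried out identically for every $j$. The reason \cref{lem:ConjDE,lem:ConjF2} were stated for an arbitrary shift $A$ is precisely that, once an inner conjugation has displaced the argument of $\mc{E}_{-\nu_j}$ away from $0$, the next conjugation must act on a shifted operator.

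I would work from the inside out, following the nesting in \cref{TriplyMixedGeneratingConj}. First, the second identity of \cref{lem:ConjDE} with $u=Z$ and $A=0$ conjugates $\mc{E}_{-\nu_j}(0)$ by $\mc{D}^{(\sigma)}(Z)$, producing a sum over $w_j$ with weight $\tfrac{\nu_j!}{(\nu_j-w_j)!}$, the series $\mc{S}(Zz_j)^{-\nu_j-1}$, and the shifted operator $\mc{E}_{-\nu_j}(Zz_j)$. Next, the first identity of \cref{lem:ConjDE} with $u=Y$ and $A=Zz_j$ conjugates by $\mc{D}^{(h)}(Y)$, introducing a sum over $v_j$ with weight $\tfrac{(v_j+\nu_j-1)!}{(\nu_j-1)!}$, the series $\mc{S}(Yy_j)^{\nu_j-1}$, and a further shift to $Yy_j+Zz_j$. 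Finally, \cref{lem:ConjF2} with $u=X$ and $A=Yy_j+Zz_j$ conjugates by $e^{X\mc{F}_2}$, shifting the argument to its final form $X\nu_j+Yy_j+Zz_j$ and contributing no new weight.

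Reassembling the product inside the correlator of \cref{TriplyMixedGeneratingConj}, each index $j$ carries the weight $\tfrac{1}{\nu_j}\cdot\tfrac{\nu_j!}{(\nu_j-w_j)!}\cdot\tfrac{(v_j+\nu_j-1)!}{(\nu_j-1)!}$, where the first factor comes from the $\tfrac{1}{\prod\nu_j}$ in \cref{TriplyMixedGeneratingConj}; this collapses to $\tfrac{(\nu_j+v_j-1)!}{(\nu_j-w_j)!}$, and only the overall $\tfrac{1}{\prod\mu_i}$ survives, exactly as in \cref{eq:h_as_wedge,eq:sigma_as_wedge}. Pulling the sums over $v,w\in\N^n$ and the coefficient extractions $[\underline{y}^v\underline{z}^w]$ out of the correlator gives the generating-series form of the identity, and extracting $p!\,[X^pY^qZ^r]$ yields the stated formula. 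I expect no conceptual difficulty: the argument is a bookkeeping computation, and the only points requiring care are respecting the inside-out order of the three conjugations, so that the running shift is fed correctly into each successive lemma, and checking the clean cancellation of the $\nu_j$-factors in the combined weight.
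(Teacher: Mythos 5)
Your proposal is correct and follows essentially the same route as the paper: starting from the rearranged correlator in \cref{TriplyMixedGeneratingConj}, evaluating the nested conjugations inside-out via \cref{lem:ConjDE} (first with $u=Z$, $A=0$, then $u=Y$, $A=Zz_j$) and \cref{lem:ConjF2} (with $u=X$), and collapsing the weights $\tfrac{1}{\nu_j}\cdot\tfrac{\nu_j!}{(\nu_j-w_j)!}\cdot\tfrac{(v_j+\nu_j-1)!}{(\nu_j-1)!}=\tfrac{(\nu_j+v_j-1)!}{(\nu_j-w_j)!}$ before extracting $p!\,[X^pY^qZ^r]$. The paper compresses all of this into one sentence ("by these two lemmata\dots"), so your write-up is simply a more explicit version of the same argument, and your reading of the prefactor as an overall $\tfrac{1}{\prod_i\mu_i}$ is the intended one.
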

 
 Let us analyse this expression. First, the variable \( u\) has been omitted and is replaced by three variables, \( X\), \( Y\), and \( Z\), that count one kind of ramification each. Furthermore, \( Y\) always occurs together with a \( y_j \), and similarly for \( Z\). Hence, the parameter \( q \) on the left-hand side corresponds to \( \sum_{i=1}^n v_n\) on the right-hand side and similarly \( r \) corresponds to \( \sum_{j=1}^n w_n \).

\section{Johnson's algorithm for (strictly) monotone Hurwitz numbers}\label{sec:Johnson}
In this section we apply an algorithm described in \cite{johnson2015} to evaluate the vacuum expectations expressing monotone and strictly monotone Hurwitz numbers.
For $I,K\subset [m]$ and $J,L \subset [n]$, where $[n]= \{1, \dots, n\}$, define
\begin{equation}
\bvs{I}{J}{K}{L}=\varsigma\left(\text{det}\begin{bmatrix} |\mu_I|-|\nu_J| & z_J \\ |\mu_K|-|\nu_L| & z_L \end{bmatrix}\right)\,.\label{eq:linearvarsigma}
\end{equation}
where $\mu_I = \sum_{i \in I} \mu_i$ for a partition $\mu$, and similarly $z_I = \sum_{i \in I} z_i$ for the variables $z_i$.
Define moreover
\begin{equation}
\mathcal{E}'(I, J) = \mathcal{E}_{|\mu_I| - |\nu_J|}(z_J)
\end{equation}
and observe that
\begin{equation}
[\mathcal{E}'(I, J), \mathcal{E}'(K, L)] = \bvs{I}{J}{K}{L} \mathcal{E}'(I \cup K, J \cup L)\,.\label{Ecomm}
\end{equation}

Following \cite[Section 3]{johnson2015}, we choose a chamber \( \mf{c} \) of the hyperplane arrangement \( \mc{W}(m,n)\), and consider the expression
\begin{equation}
\left\langle\prod_{i=1}^{m}\mathcal{E}_{\mu_i}(0)\prod_{j=1}^n\mathcal{E}_{-\nu_j}(z_j)\right\rangle
\end{equation}
there. The idea of the algorithm is to commute all positive-energy operators to the right and all negative-energy operators to the left, where they will annihilate the vacuum and the covacuum, respectively. In doing so, we pick up correlators, reducing the total amount of operators in the correlator. This ensures the algorithm terminates.\par
More explicitly, suppose we have a term of the form
\begin{equation}
\bigg\< \prod_{i=1}^k \mc{E}'(I_i,J_i)\bigg\>\,,\label{corrshape}
\end{equation}
where the product is ordered. Take the left-most negative-energy operator, \( \mc{E}'(I_i,J_i)\). If it is next to the covacuum, the term is zero. Otherwise, commute it to the left. By \cref{Ecomm}, this commutation results in two new terms: one where the factors \( \mc{E}(I_{i-1},J_{i-1}) \) and \( \mc{E}'(I_i,J_i )\) are switched, and one where they are replaced by \( \mc{E}' (I_{i-1} \cup I_i,J_{i-1}\cup J_i )\). Both of these terms are again of shape \cref{corrshape}, so the algorithm can continue.\par
In the end we get the following formula:
\begin{equation}
\left\langle\prod_{i=1}^{m}\mathcal{E}_{\mu_i}(0)\prod_{j=1}^n\mathcal{E}_{-\nu_j}(z_j)\right\rangle = \frac{1}{\varsigma(z_{[n]})} \sum_{P \in CP^{\mathfrak{c}}}^{\text{finite}} \prod_{\ell=1}^{m+n-1} \bvs{I^P_\ell}{J^P_\ell}{K^P_\ell}{L^P_\ell},\label{eq:Joh_decomp}
\end{equation}
where $CP^{\mf{c}}$ is a finite set of \emph{commutation patterns} that only depends on the chamber $\mathfrak{c}$ of the hyperplane arrangement $\mc{W}(m,n)$: the chamber determines the sign of the energy of the \( \mc{E}' \)-operators obtained from the commutators, and hence the operators to be commuted in future steps. The \( I^P_\ell\), \( J^P_\ell\), \( K^P_\ell\), and \( L^P_\ell \) are the four partitions involved in the \( \ell \)-th step of commutation pattern \( P \).\par
Note that the only difference between the correlators on the left-hand side of  \cref{eq:Joh_decomp} and the ones used in Johnson's paper is in the arguments of the $\mathcal{E}'$-operators with negative energy. This difference only affects slightly the definition of the functions $\varsigma '$ and the prefactor $1/\varsigma(z_{[n]})$.\par
Combining \cref{eq:Joh_decomp} with \ref{eq:h_as_wedge} and \ref{eq:sigma_as_wedge} and substituting $u z_j\mapsto z_j$, we have just proved the first main theorem of this paper from which we will derive \cref{thm:piecewise} and \cref{theorem:wallcrossingmon}.

\begin{theorem}
\label{theorem:Joh_alg}
Let $g$ be a non-negative integer and let $m$, $n$ be positive integers such that $(g, n+m) \neq (0,2)$. Let $\mf{c}$ be a chamber of the hyperplane arrangement $\mc{W}(m,n)$. For each $\mu,\nu\in\mf{c}$, we have
\begin{equation}
h_{g;\mu,\nu}^{\le} \! = \frac{1}{\prod \mu_i} \!\! \sum_{\substack{v \vdash b \\ \ell(v)=n}} \prod_{j=1}^n\frac{(v_j\! +\! \nu_j \!- \! 1)!}{\nu_j!}[z^{v_1}_1\dots z^{v_n}_n]\prod_{j=1}^n\mathcal{S}(z_j)^{\nu_j-1}\frac{1}{\varsigma(z_{[n]})} \sum_{P \in CP^{\mathfrak{c}}}^{\textup{finite}} \prod_{\ell=1}^{m+n-1} \bvs{I^P_\ell}{J^P_\ell}{K^P_\ell}{L^P_\ell}\,
\end{equation}
and
\begin{equation}
 h_{g;\mu,\nu}^{<}=\frac{1}{\prod\mu_i}\!\! \sum_{\substack{v \vdash b\\ 0 \leq v_j \leq \nu_j }}\prod_{i=1}^n\frac{(\nu_j-1)!}{(\nu_j-v_j)!}[z^{v_1}_1\dots z^{v_n}_n]\prod_{j=1}^n\mathcal{S}(z_j)^{-\nu_j-1}\frac{1}{\varsigma(z_{[n]})} \sum_{P \in CP^{\mathfrak{c}}}^{\textup{finite}} \prod_{\ell=1}^{m+n-1} \bvs{I^P_\ell}{J^P_\ell}{K^P_\ell}{L^P_\ell}\,.
\end{equation}
\end{theorem}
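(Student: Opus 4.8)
The plan is to read off both identities directly from the operator expressions of \cref{eq:h_as_wedge,eq:sigma_as_wedge} by inserting into them the commutation decomposition \cref{eq:Joh_decomp}, after a single bookkeeping step that disposes of the auxiliary variable $u$. I will carry out the monotone case; the strictly monotone one is the identical argument with \cref{eq:sigma_as_wedge} in place of \cref{eq:h_as_wedge}, the factor $\mathcal{S}(z_j)^{-\nu_j-1}$ in place of $\mathcal{S}(z_j)^{\nu_j-1}$, and the summation range $0\le v_j\le\nu_j$ in place of $\ell(v)=n$.

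First I would record how $u$ enters the summand of \cref{eq:h_as_wedge}: it occurs only inside the factors $\mathcal{S}(uz_j)^{\nu_j-1}$ and inside the arguments of the operators $\mathcal{E}_{-\nu_j}(uz_j)$, and in both places it is bound to $z_j$ as the single product $uz_j$. Expanding the summand as a power series in these products, the coefficient of $z_1^{v_1}\cdots z_n^{v_n}$ is a number times $u^{v_1+\cdots+v_n}$, so $[u^b]\,[z_1^{v_1}\cdots z_n^{v_n}]$ is nonzero exactly when $\sum_j v_j=b$. Since the summation range already forces $\sum_j v_j=b$, the operator $[u^b]$ is redundant on every summand and amounts to setting $u=1$; this is precisely the substitution $uz_j\mapsto z_j$. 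After it the prefactors read $\mathcal{S}(z_j)^{\nu_j-1}$ and the correlator becomes $\langle\prod_{i=1}^m\mathcal{E}_{\mu_i}(0)\prod_{j=1}^n\mathcal{E}_{-\nu_j}(z_j)\rangle$, matching the left-hand side of \cref{eq:Joh_decomp} exactly.

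Next I would invoke \cref{eq:Joh_decomp}. Since $\mu,\nu\in\mf{c}$, the chamber fixes the set of commutation patterns $CP^{\mf{c}}$, and the decomposition replaces the correlator by $\frac{1}{\varsigma(z_{[n]})}\sum_{P\in CP^{\mf{c}}}\prod_{\ell=1}^{m+n-1}\bvs{I^P_\ell}{J^P_\ell}{K^P_\ell}{L^P_\ell}$. Inserting this for the correlator in the $u=1$ form of \cref{eq:h_as_wedge}, and leaving the remaining extraction $[z_1^{v_1}\cdots z_n^{v_n}]$ and the factorial prefactors untouched, reproduces the asserted formula for $h^{\le}_{g;\mu,\nu}$ exactly; the strictly monotone identity drops out of \cref{eq:sigma_as_wedge} in the same manner.

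The one point that genuinely needs attention, and which I expect to be the main—if modest—obstacle, is the compatibility of the coefficient extraction with the apparent pole in the decomposition. The factor $1/\varsigma(z_{[n]})$ is a Laurent series, whereas the correlator it stands for is an honest power series in the $z_j$, the pole being cancelled by the vanishing order of the $\varsigma'$-product; this cancellation, guaranteed by \cref{eq:Joh_decomp}, is what makes $[z_1^{v_1}\cdots z_n^{v_n}]$ of the full expression well defined and equal, term by term, to the extraction carried out before decomposing. This is also where the hypothesis $(g,m+n)\neq(0,2)$ is used: it keeps us in the stable range, excluding the degenerate two-point situation $b=0$ in which there are no intermediate operators and neither the preceding lemmas nor the chamber-indexed decomposition are meaningful. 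Finally I would note that inside $\mf{c}$ the connected and disconnected Hurwitz numbers agree, so the fact that the vacuum expectations compute the disconnected counts introduces no ambiguity.
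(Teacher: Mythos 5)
Your proposal is correct and takes essentially the same route as the paper: the paper's proof consists precisely of deriving \cref{eq:Joh_decomp} via Johnson's commutation algorithm and then combining it with \cref{eq:h_as_wedge,eq:sigma_as_wedge} under the substitution $uz_j\mapsto z_j$. Your explicit justification that the $[u^b]$ extraction is redundant once $[z_1^{v_1}\cdots z_n^{v_n}]$ is taken with $|v|=b$ (since $u$ only ever appears bound to $z_j$) is just a careful spelling-out of that one-line substitution, and your remark on the cancellation of the $1/\varsigma(z_{[n]})$ pole matches the observation the paper makes when it later proves polynomiality.
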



\section{Piecewise polynomiality for double Hurwitz numbers}\label{sec:piecewise}

In this section we begin approaching the problem of piecewise polynomiality of triply mixed Hurwitz numbers. We use a  semi-infinite wedge approach to this problem inspired by Johnson's work in \cite{johnson2015}. To be more precise, we begin by deriving piecewise polynomiality for monotone Hurwitz numbers (recovering \cref{theorem:mixhur} for $p=0$) and for strictly monotone Hurwitz numbers directly from the expression in \cref{theorem:Joh_alg}. This shows that triply mixed Hurwitz numbers are piecewise polynomial for the extremal cases of $p=b$, $q=b$, and $r=b$.

\subsection{Piecewise polynomiality for monotone and strictly monotone Hurwitz numbers}\label{piecewisemonotone}

\begin{theorem}[Piecewise polynomiality]\label{thm:piecewise}
Let $g$ be a non-negative integer and let $m$, $n$ be positive integers such that $(g, n+m) \neq (0,2)$. Let $\mf{c}$ be a chamber of the hyperplane arrangement $\mc{W}(m,n)$.
Then there exist polynomials $P_{g}^{\mf{c}, \leq}$ and $P_{g}^{\mf{c}, <}$ of degree \( 4g-3+m+n\) in $m+n$ variables such that 
\begin{align}
h^{\le}_{g;\mu,\nu}&=P_{g}^{\mf{c}, \leq}(\mu,\nu)\,;\\
h^{<}_{g;\mu,\nu}&=P_{g}^{\mf{c}, <}(\mu,\nu)
\end{align}
 for all $(\mu,\nu)\in \mf{c}$.
\end{theorem}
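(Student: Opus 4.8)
The plan is to read off both claims directly from \cref{theorem:Joh_alg}. The crucial point is that for a fixed chamber $\mf{c}$ the set $CP^{\mf{c}}$ is finite and independent of $(\mu,\nu)\in\mf{c}$, so it suffices to analyse a single commutation pattern $P$ and then add up the finitely many contributions. For each $P$ I would extract the coefficient $[z_1^{v_1}\cdots z_n^{v_n}]$ of
\[
\prod_{j=1}^n\mathcal{S}(z_j)^{\pm(\nu_j\mp1)}\;\frac{1}{\varsigma(z_{[n]})}\;\prod_{\ell=1}^{m+n-1}\bvs{I^P_\ell}{J^P_\ell}{K^P_\ell}{L^P_\ell}\,,
\]
weight it by the appropriate factorial prefactor, and show that the outcome is a polynomial in $(\mu,\nu)$. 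The monotone and strictly monotone cases run in parallel: they differ only in the exponent of the $\mathcal{S}$-factors, $\nu_j-1$ versus $-\nu_j-1$, and in the prefactor, $(v_j+\nu_j-1)!/\nu_j!$ versus $(\nu_j-1)!/(\nu_j-v_j)!$.

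Polynomiality rests on three elementary expansions. Each factor $\bvs{I}{J}{K}{L}$ is $\varsigma$ of a determinant that is bilinear---of degree one in the energies $\mu_I-\nu_J$ and degree one in the $z$-variables; since $\varsigma$ is odd with leading term the identity, the coefficient of any fixed $z$-monomial in the $\varsigma'$-product is a polynomial in $(\mu,\nu)$ whose degree equals the extracted $z$-degree. Writing $\mathcal{S}(z_j)^{\pm(\nu_j\mp1)}=\exp(\pm(\nu_j\mp1)\log\mathcal{S}(z_j))$ and using that $\log\mathcal{S}$ is even with lowest term of order $z^2$, a fixed power of $z_j$ selects a polynomial in $\nu_j$. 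The factorial ratios are Pochhammer products, hence polynomials of degree $v_j-1$ in $\nu_j$; in the monotone case $\ell(v)=n$ forces $v_j\ge1$, while in the strictly monotone case the constraint $v_j\le\nu_j$ is automatic from the vanishing of $\nu!/(\nu-v)!$. Since $1/\varsigma(z_{[n]})=z_{[n]}^{-1}/\mathcal{S}(z_{[n]})$ contributes only constant coefficients, coefficient extraction produces a polynomial $Q^{\mf{c}}(\mu,\nu)$, and the Hurwitz number equals $Q^{\mf{c}}/\prod_i\mu_i$.

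For the degree, recall $b=2g-2+m+n$. To reach a monomial with $\sum_jv_j=b$ while the $\mathcal{S}$-factors sit at order $z^0$, the $\varsigma'$-product must supply $z$-degree $b+1$, because $1/\varsigma(z_{[n]})$ lowers the total by one; by the first expansion this is also its $(\mu,\nu)$-degree. The factorial prefactors add degree $\sum_j(v_j-1)=b-n$. Promoting one $\mathcal{S}$-factor to $z$-degree $2a_j$ gains at most $a_j$ in $\nu_j$ but costs $2a_j$ in the $\varsigma'$-product, a net loss, so the estimate is sharp at $a_j=0$. Hence $Q^{\mf{c}}$ has degree at most $(b+1)+(b-n)=2b+1-n$, and dividing by $\prod_i\mu_i$ gives degree $2b+1-(m+n)=4g-3+m+n$, as claimed.

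The main obstacle is exactly this final division: to know that the Hurwitz number is a genuine polynomial---and that the degree count is legitimate---one must prove $\prod_i\mu_i\mid Q^{\mf{c}}$. I expect this from the vanishing of $Q^{\mf{c}}$ along each hyperplane $\mu_i=0$. Indeed, setting $\mu_i=0$ turns $\mathcal{E}_{\mu_i}(0)=\alpha_{\mu_i}$ into $\alpha_0$, the charge operator, which annihilates the charge-zero sector $\mc{V}_0$, so the underlying correlator vanishes identically in the $z$-variables. The delicate step is to transport this vanishing through the commutation-pattern decomposition down to the extracted polynomial $Q^{\mf{c}}$, and, in the strictly monotone case, to absorb the residual $1/\nu_j$ coming from the $v_j=0$ terms of $(\nu_j-1)!/(\nu_j-v_j)!$; this bookkeeping is the technical heart of the argument.
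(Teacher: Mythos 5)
Your skeleton is the same as the paper's: start from \cref{theorem:Joh_alg}, use finiteness of $CP^{\mf{c}}$, expand the $\varsigma'$-factors, the $\mc{S}$-factors and the factorial ratios, and your degree bookkeeping ($b+1$ from the $\varsigma'$-product, $b-n$ from the factorials, minus $m$ from the division by $\prod_i\mu_i$) reproduces $4g-3+m+n$ exactly as in the paper. The first genuine gap is the one you yourself flag as ``the technical heart'': the divisibility $\prod_i\mu_i\mid Q^{\mf{c}}$. Your proposed route cannot work as stated. The identity between the correlator and $\frac{1}{\varsigma(z_{[n]})}\sum_P\prod_\ell\varsigma'$ is established only at lattice points of the \emph{open} chamber $\mf{c}$; the locus $\mu_i=0$ is itself a wall of $\mc{W}(m,n)$ (take $I=\{i\}$, $J=\emptyset$), where the operator $\mc{E}_{\mu_i}(0)$ degenerates to the zero-energy operator $\alpha_0$ and Johnson's algorithm produces a structurally different decomposition. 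Hence the vanishing of the genuine correlator at $\mu_i=0$ says nothing about the value of the chamber polynomial $Q^{\mf{c}}$ there, and there is no limiting or density argument available, since the identity cannot be approached through chamber lattice points. The paper's resolution is direct and term-by-term, and it is the idea your proof is missing: in \emph{every} commutation pattern $P$, the positive-energy operator $\mc{E}_{\mu_i}(0)$ must at some step be commuted with a negative-energy operator $\mc{E}_{\mu_K-\nu_L}(z_L)$, and by \cref{eq:linearvarsigma} this commutator carries the factor $\varsigma(\mu_i z_L)$, every $z$-coefficient of which is divisible by $\mu_i$. This proves divisibility of each summand separately (and, as a byproduct, the vanishing at $\mu_i=0$ you were after) without ever leaving the chamber.

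The second gap concerns the $v_j=0$ terms and the resulting $1/\nu_j$ poles. In the strictly monotone sum of \cref{theorem:Joh_alg} the range is explicitly $0\le v_j\le\nu_j$, and the monotone sum is also intended to include $v_j=0$ (the condition ``$\ell(v)=n$'' is abuse of notation: compare the sums over $v_j\in\mathbb{Z}_{\ge0}$ in \cref{hasrefinedgencoef}, and note that the $v=0$ term of the operator in \cref{eq:h_operators} is just $\alpha_{-\nu_j}$, which certainly contributes). For $v_j=0$ the factorial ratio equals $1/\nu_j$, not a Pochhammer polynomial, so your blanket claim that the prefactors are polynomials of degree $v_j-1$ fails, and an extra cancellation must be proved: the paper shows that for such $j$ the negative-energy operator $\mc{E}_{-\nu_j}(z_j)$ is commuted with some $\mc{E}_{\mu_K-\nu_L}(z_L)$, producing $\varsigma\big((\mu_K-\nu_L)z_j-\nu_j z_L\big)$, whose $[z_j^0]$-coefficient is $\varsigma(-\nu_j z_L)$ and hence divisible by $\nu_j$. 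Relatedly, your assertion that $1/\varsigma(z_{[n]})$ ``contributes only constant coefficients'' presupposes that $z_{[n]}$ divides each pattern's $\varsigma'$-product; without this, extracting coefficients of $z_{[n]}^{-1}$ times a power series is not even well defined, and your step ``$1/\varsigma(z_{[n]})$ lowers the total by one'' has no meaning. The paper proves this divisibility from the last step of the algorithm, which for every pattern ends with $\dcor{\mc{E}_a(z_I)\mc{E}_{-a}(z_{[n]\setminus I})}=\varsigma(az_{[n]})\dcor{\mc{E}_0(z_{[n]})}$, supplying the needed factor $\varsigma(a z_{[n]})$.
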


\begin{remark}
The case $(g, n+m) = (0,2)$ only occurs for $g=0$ and $\mu = \nu = (d)$ for some positive integer $d$, which implies that there are no intermediate ramifications ($b=0$). In this case there is, up to isomorphism, a unique covering $z \mapsto \alpha z^d$, for $\alpha \in \C^{\times}$, with automorphism group of order $d$. Hence the Hurwitz number equals $h_{0; (d), (d)} = \frac{1}{d}$ independently of the monotonicity conditions, reflecting a rational function this time, but indeed again of degree $4g - 3 +m + n = -1$.
\end{remark}
\begin{proof} Let us first prove the statement for the monotone case. We fix a chamber $\mf{c}$. By \cref{theorem:Joh_alg}, we can write the monotone Hurwitz numbers as
 \begin{equation}
  \prod_{i=1}^m \mu_i\prod_{j=1}^n \nu_j h_{g;\mu,\nu}^{\le} \! = \!\!\!\! \sum_{\substack{v \vdash b \\ \ell(v)=n}} \! \prod_{j=1}^n\frac{(v_j\! +\! \nu_j \!- \! 1)!}{(\nu_j\! -\! 1)!}[z^{v_1}_1\dots z^{v_n}_n]\prod_{j=1}^n\mathcal{S}(z_j)^{\nu_j-1}\frac{1}{\varsigma(z_{[n]})} \sum_{P \in CP^{\mathfrak{c}}}^{\text{finite}} \prod_{\ell=1}^{m+n-1} \bvs{I^P_\ell}{J^P_\ell}{K^P_\ell}{L^P_\ell}\,. \label{eq:tuamadre}
\end{equation}	
Let us first prove the following:
\begin{lemma}\label{polymonotonefactor}
For $(\mu, \nu) \in \mathfrak{c}$, each summand
\begin{align}
[z^{v_1}_1\dots z^{v_n}_n]\prod_{j=1}^n\mathcal{S}(z_j)^{\nu_j-1}\frac{1}{\varsigma(z_{[n]})} \sum_{P \in CP^{\mathfrak{c}}}^{\textup{finite}} \prod_{\ell=1}^{m+n-1} \bvs{I^P_\ell}{J^P_\ell}{K^P_\ell}{L^P_\ell}
\end{align}
is a polynomial in the entries of $\mu$ and $\nu$ of degree bounded by $2g - 1 + m + n$.
\end{lemma}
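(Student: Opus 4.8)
The plan is to carry out the degree count of \cite{johnson2015}, organised around the factorisation $\varsigma(x)=x\,\mathcal{S}(x)$, which splits every ingredient of the summand into a \emph{balanced} piece, carrying equal $z$-weight and $(\mu,\nu)$-weight, and a \emph{cheap} piece, carrying at most half as much $(\mu,\nu)$-weight as $z$-weight. First I would expand each commutator factor and the prefactor as
\[
\bvs{I^P_\ell}{J^P_\ell}{K^P_\ell}{L^P_\ell}=\varsigma(\zeta^P_\ell)=\zeta^P_\ell\,\mathcal{S}(\zeta^P_\ell),\qquad \frac{1}{\varsigma(z_{[n]})}=\frac{1}{z_{[n]}}\cdot\frac{1}{\mathcal{S}(z_{[n]})},
\]
where $\zeta^P_\ell=(|\mu_{I^P_\ell}|-|\nu_{J^P_\ell}|)\,z_{L^P_\ell}-(|\mu_{K^P_\ell}|-|\nu_{L^P_\ell}|)\,z_{J^P_\ell}$ is the determinant inside the $\varsigma'$. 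The crucial structural observation is that $\zeta^P_\ell$ is \emph{bilinear}: homogeneous of degree one in the $z_j$ and simultaneously of degree one in the entries of $(\mu,\nu)$.

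Next I would record how each block contributes to the two gradings. Since $\mathcal{S}(z)^N=\exp(N\log\mathcal{S}(z))$ and $\log\mathcal{S}(z)$ is an even series starting in degree two, the coefficient of $z_j^{2a}$ in $\mathcal{S}(z_j)^{\nu_j-1}$ is a polynomial in $\nu_j$ of degree at most $a$, so this block is cheap; the coefficients of $1/\mathcal{S}(z_{[n]})$ are pure constants; and, by bilinearity of $\zeta^P_\ell$, the coefficient of any $z$-monomial of degree $d$ in $\varsigma(\zeta^P_\ell)$ is homogeneous in $(\mu,\nu)$ of degree exactly $d$, so these blocks are balanced. In particular every coefficient in the $z$-expansion is a polynomial in the entries of $\mu$ and $\nu$, so the extracted coefficient $[z_1^{v_1}\cdots z_n^{v_n}]$ is itself such a polynomial, which already settles the polynomiality claim.

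For the degree I would set up the linear optimisation governing how the total $z$-degree $\sum_j v_j=b=2g-2+m+n$ is distributed. Writing $s_0$ for the degree drawn from $1/\varsigma(z_{[n]})$ (odd and $\ge -1$, contributing nothing to $(\mu,\nu)$-degree), $S$ for the total drawn from the cheap blocks (contributing at most $\tfrac{S}{2}$), and $D$ for the total drawn from the balanced blocks $\prod_\ell\varsigma(\zeta^P_\ell)$ (contributing exactly $D$), the constraint $s_0+S+D=b$ forces the $(\mu,\nu)$-degree to be at most $D+\tfrac{S}{2}$. Trading one unit of $D$ for one unit of $S$ at fixed $s_0+S+D$ strictly lowers this objective, so the maximum is attained at $S=0$, and then taking $s_0=-1$ gives $D=b+1$ and $(\mu,\nu)$-degree at most $b+1=2g-1+m+n$. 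The estimate is uniform in the commutation pattern $P$, hence survives the finite sum over $CP^{\mf{c}}$; the strictly monotone case is identical, the exponent $\nu_j-1$ being replaced by $-\nu_j-1$, which is still linear in $\nu_j$ and hence still cheap.

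The delicate point, and where I expect the real work to lie, is that the term $s_0=-1$ — precisely what sharpens the bound from $b-1$ up to the claimed $b+1$ — comes from the factor $1/z_{[n]}$, which is not a genuine pole: the vacuum expectation $\left\langle\prod_i\mathcal{E}_{\mu_i}(0)\prod_j\mathcal{E}_{-\nu_j}(z_j)\right\rangle$ is manifestly a power series in the $z_j$, being a matrix element of operators that are themselves power series in the $z_j$, so the factor $1/\varsigma(z_{[n]})$ must be cancelled by a compensating vanishing of $N:=\sum_P\prod_\ell\varsigma(\zeta^P_\ell)$ along $z_{[n]}=0$. To make the count rigorous I would work instead with the regular series $\varsigma(z_{[n]})\cdot\left\langle\cdots\right\rangle=N$, whose coefficient at $z$-degree $s$ is honestly homogeneous of $(\mu,\nu)$-degree $s$, and then transport the estimate across the relation $N=z_{[n]}\mathcal{S}(z_{[n]})\cdot\left\langle\cdots\right\rangle$: since multiplication by $z_{[n]}\mathcal{S}(z_{[n]})$ raises $z$-degree by one at leading order while preserving $(\mu,\nu)$-degree, the correlator coefficient at $z$-degree $b$ is controlled by that of $N$ at $z$-degree $b+1$, of $(\mu,\nu)$-degree $b+1$. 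Controlling this transport, and the attendant cancellations, uniformly over the chamber-dependent index set $CP^{\mf{c}}$ is the crux; the optimisation itself is elementary.
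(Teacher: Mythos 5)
Your degree bookkeeping coincides exactly with the paper's: the same split into balanced factors (the $\varsigma'$-factors, whose arguments are bilinear in the $z_j$ and in the entries of $(\mu,\nu)$) and cheap factors (coefficients of $\mathcal{S}(z_j)^{\nu_j-1}$, of half weight, and the constant coefficients of $1/\mathcal{S}(z_{[n]})$), with the same extremal count $s_0=-1$, $S=0$, $D=b+1$ producing the bound $2g-1+m+n$. The genuine difference is how the apparent pole $1/z_{[n]}$ is removed, and here the paper has a one-line \emph{local} argument that you missed and that makes your ``crux'' evaporate: for every commutation pattern $P$, the last step of Johnson's algorithm is the commutator of $\mathcal{E}_a(z_I)$ with $\mathcal{E}_{-a}(z_{[n]\setminus I})$, which contributes the factor $\varsigma(a z_{[n]})$ to that pattern's product of $\varsigma'$-factors; since $\varsigma(az_{[n]})=az_{[n]}\mathcal{S}(az_{[n]})$, each pattern's product is \emph{individually} divisible by $z_{[n]}$, so $\frac{1}{\varsigma(z_{[n]})}\sum_P\prod_\ell\varsigma'(\cdots)$ is a power series termwise --- no cross-pattern cancellation and no a priori regularity of the correlator are needed. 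Your global route (the correlator is a power series, hence $N=\sum_P\prod_\ell\varsigma(\zeta^P_\ell)$ must vanish along $z_{[n]}=0$, then transport the estimate through $N=z_{[n]}\mathcal{S}(z_{[n]})\cdot\langle\cdots\rangle$) is also valid and completable, but the step you flag as delicate is in fact routine: the finite sum over $CP^{\mathfrak{c}}$ is already absorbed into $N$ (so no uniformity over patterns is at stake), $N$ is balanced, and dividing a balanced series by $z_{[n]}$ is a $\mathbb{Q}$-linear operation on coefficients that preserves polynomiality in $(\mu,\nu)$ and does not raise degree; an induction on the $z$-homogeneous degree then absorbs the unit $\mathcal{S}(z_{[n]})$, giving that the $z$-degree-$t$ part of the correlator has $(\mu,\nu)$-degree at most $t+1$, which is your bound. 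In short, you trade the paper's termwise combinatorial observation for an a priori regularity argument: the paper's version is shorter and self-contained within the algorithm, while yours is more robust to how the algorithm's output is organised, at the cost of putting weight on a step that the paper's observation renders trivial.
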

\begin{proof}
Let us recall that the expansion of the function $\mathcal{S}(z)$ reads
\begin{equation}
\mathcal{S}(z)=\frac{2\sinh(z/2)}{z} = \sum_{n=0}\frac{z^{2n}}{2^{2n}(2n+1)!} = 1+\frac{z^2}{24}+\frac{z^4}{1920}+\mc{O}(z^6).
\end{equation}
Hence, the coefficient of $z_j^{2t}$ in $\mathcal{S}(z_j)^{\nu_j-1}$ is a polynomial in $\nu_j$ of degree $t$. We show that 
\begin{align}
\frac{1}{\varsigma(z_{[n]})} \sum_{P \in CP^{\mathfrak{c}}}^{\text{finite}} \prod_{\ell=1}^{m+n-1} \bvs{I^P_\ell}{J^P_\ell}{K^P_\ell}{L^P_\ell}
\end{align}
is a formal power series in $z_1, \dots, z_n$: Let $B_k$ be the $k$-th Bernoulli number. The expansion of $1/\varsigma(z)$ reads
\[ \frac{1}{\varsigma(z)} = \frac 1 z - \sum_{n=1}^{\infty} \frac{(1 - 2^{1-2n})B_{2n} z^{2n-1}}{(2n)!} =\frac 1 z - \frac{z}{24} + \frac{7z^3}{5760} + \mc{O}(z^5).\]
Therefore we need to show that $z_{[n]}$ divides the product of the functions $\varsigma'$ in \cref{eq:Joh_decomp} for each commutation pattern $P$. Indeed it suffices to observe that, for every commutation pattern $P$, in the last step of Johnson's algorithm the correlator is 
\begin{equation}
\dcor{\mathcal{E}_a(z_I)\mathcal{E}_{-a}(z_{[n] \setminus I})} = \varsigma(a z_{[n]}) \dcor{\mathcal{E}_0(z_{[n]})}
\end{equation}
for some $I$ and $a$ depending on $P$, which is divisible by $z_{[n]}$. Note that the functions $\varsigma '$, by \cref{eq:linearvarsigma}, are odd functions of either $z_i\mu_j$ or $z_i\nu_j$, for some $i$ and $j$. Therefore the coefficient of $[z^{w_1}_1\dots z^{w_n}_n]$ is a polynomial in $\mu_i$ and $\nu_j$ of degree $w_{[n]} + 1 $. This concludes the proof of the lemma.
\end{proof}

 Now observe that $\frac{(v_j+\nu_j-1)!}{(\nu_j-1)!}$ is a polynomial in $\nu_j$ of degree $v_j$ and lower degree equal to one if $v_j$ is non-zero, hence each $\prod_{j=1}^n\frac{(v_j-\nu_j-1)!}{(\nu_j-1)!}$ is a polynomial in the entries of $\nu$ of degree $2g - 2 + m + n$, and lower degree equal to the number of $v_j$ that are non-zero. 
This implies the piecewise polynomiality for \(\prod\mu_i\prod\nu_j h^{\le}_{g;\mu,\nu} \). We are left to prove that each $\mu_i$ and each $\nu_j$ divides the right-hand side of \cref{eq:tuamadre}. For the divisibility by $\mu_i$, observe that, since $\mathcal{E}_{\mu_i}(0)$ has positive energy, in any commutation pattern $P$ it happens that it is commuted with an operator of the form $\mathcal{E}_{\mu_K-\nu_L}( z_L)$ producing a factor $\varsigma(\mu_i z_L)$, which is divisible by $\mu_i$. To prove the divisibility by $\nu_j$, we distinguish two cases:
\begin{description}
\item[$v_j\neq0$] In this case the factor $\frac{(v_j+\nu_j-1)!}{(\nu_j-1)!} $ is divisible by $\nu_j$;
\item[$v_j=0$] Since the operator $\mathcal{E}_{-\nu_j}(z_j)$ has negative energy, in any commutation pattern $P$ it happens that it is commuted with an operator of the form $ \mathcal{E}_{\mu_K-\nu_L}(\sum z_L)$ producing a factor 
$$
\varsigma\left((\mu_K - \nu_L)z_j - \nu_j z_L\right),
$$
hence the coefficient of $[z_j^0]$ of the corresponding summand is divisible by $\nu_j$.
\end{description}
Note that the division by the factor $\prod \mu_i \prod \nu_j$ decreases the degree of the polynomial by $n+m$. Hence, the total upper bound for the degree of the polynomial $P^{\mf{c}, \leq}_g$ is \[(2g - 1 + m + n) + (2g - 2 + m + n) - (m + n) = 4g  - 3 + m + n, \]
while the lower bound is given by $(m+n-1) + 1 - (m + n) = 0$. This concludes the proof for the monotone case.

Let us now prove the strictly monotone case. By \cref{eq:sigma_as_wedge} we can rewrite the strictly monotone Hurwitz numbers as
 \begin{equation}
 \prod_{i=1}^m \mu_i\prod_{j=1}^n \nu_j h_{g;\mu,\nu}^{<}=\!\!\!\! \sum_{\substack{v \vdash b\\ 0 \leq v_j \leq \nu_j }} \!\! \prod_{i=1}^n\frac{(\nu_j-1)!}{(\nu_j-v_j)!}[z^{v_1}_1\dots z^{v_n}_n]\prod_{j=1}^n\mathcal{S}(z_j)^{-\nu_j-1}\frac{1}{\varsigma(z_{[n]})} \sum_{P \in CP^{\mathfrak{c}}}^{\text{finite}} \! \prod_{\ell=1}^{m+n-1} \! \bvs{I^P_\ell}{J^P_\ell}{K^P_\ell}{L^P_\ell}\,.\label{eq:tuasorella}
\end{equation}
Note that the only differences with the monotone case are in the powers of the functions $\mathcal{S}$ and in the prefactor $\frac{\nu!}{(\nu - v)!}$. However, the coefficient of $z^{2t}$ in $\mathcal{S}^{-\nu-1}$ is again a polynomial in $\nu$ of degree $t$, and the prefactor $\frac{\nu!}{(\nu - v)!}$ is again a polynomial in $\nu$ of degree $v_i$. Therefore the entire same argument applies with the same lower and upper bounds on the degrees. This concludes the proof of \cref{thm:piecewise}.
\end{proof}

\subsection{An example: computing the lowest degree for the monotone case}\label{lowestdegree}
Let us test our formula computing the lowest degree for the monotone case. Firstly, note that, because the factor $\frac{(v_j+\nu_j-1)!}{(\nu_j-1)!} $ is divisible by $\nu_j$ for $v_j \neq 0$, the lowest degree occurs for all $v_j = 0$ but one. Hence let us consider vectors $v = (0, \dots, b, \dots, 0)$, for $b$ in the $k$-th position for some $k = 1, \dots, n$. Then the expression for the monotone case then reads
 \begin{equation}  
[\deg_{\nu,\mu}\!=0] \frac{(b+\nu_k-1)!}{\prod\mu_i\prod\nu_j(\nu_k-1)!}[z^{0}_1\dots z^{b}_k \dots z^0_n]\prod_{i=1}^n\mathcal{S}(z_i)^{\nu_i-1}\left\langle\prod_{i=1}^m\mathcal{E}_{\mu_i}(0)\prod_{j=1}^n\mathcal{E}_{-\nu_j}(z_j)\right\rangle.
\end{equation}
We can therefore set $z_j=0$ for $j \neq k$. This implies that there is only one possible commutation pattern. Summing over $k$ we obtain that the total lowest degree is
\begin{equation}
[\deg_{\nu,\mu} \!=0]\sum_{k=1}^{n}(b+\nu_k - 1)\dots (\nu_k +1)  [z^{2g-2 + m +n}] \prod_{\substack{j =1 \\ j \neq k}}^n \frac{\varsigma(z \nu_j)}{\nu_j}\prod_{i = 1}^m \frac{\varsigma(z \mu_i)}{\mu_i} \frac{\mathcal{S}(z)^{\nu_k - 1}}{\varsigma(z)}
\end{equation}
In order to compute the lowest degree, we have to pick the linear term from each $\varsigma$-function at the numerator, hence we find that
\begin{equation}
 [\deg_{\nu,\mu} \!=0]h_{g;\mu,\nu}^{\le} = (b-1)! \sum_{k=1}^{n}  [\deg_{\nu} \!=0][z^{2g-2}] \mathcal{S}(z)^{\nu - 2}\,.
\end{equation}
Recall the generating series of the generalised Bernoulli polynomials \( B_k^{(n)}(x)\)\cite[p. 145]{Norlund1924} (cf. also \cite[Section 4.2.2]{Roman1984}), by
\begin{equation}
\bigg(\frac{t}{e^t-1}\bigg)^n e^{xt} \eqcolon \sum_{k=0}^\infty B_k^{(n)}(x)\frac{t^k}{k!}\,,
\end{equation}
with specific cases given by \( B_k^{(n)} \coloneq B_k^{(n)}(0)\) and the standard Bernoulli numbers \( B_k \coloneq B_k^{(1)} \) (with \( B_1 = -\frac{1}{2} \)). These are polynomial in both \( n\) and \( x\). In our case, this gives
\begin{align}
[z^{2g-2}].\mc{S}(z)^{\nu-2} &= [z^{2g-2}].\bigg( \frac{e^z-1}{z}\bigg)^{\nu-2}e^{-\frac{\nu-2}{2}z} = \frac{B_{2g-2}^{(2-\nu)}\big(\frac{2-\nu}{2}\big)}{(2g-2)!}\\
&= \frac{1}{(2g-2)!}\sum_{k=0}^{2g-2} \binom{2g-2}{k} \Big( \frac{2-\nu}{2}\Big)^{2g-2-k} B_k^{(2-\nu )}\,.
\end{align}
Taking the degree zero part in \( \nu \) corresponds to setting \( \nu =0 \), which yields, using \cite[Equation 81*]{Norlund1924},
\begin{align}
\sum_{k=0}^{2g-2} \frac{1}{k!(2g-2-k)!} B_{k}^{(2)} &= \sum_{k=0}^{2g-2} \frac{1}{k!(2g-2-k)!}\big((1-k)B_k-kB_{k-1}\big)\\
&= -\bigg( \sum_{k=0}^{2g-2} \frac{(k-1)B_k}{k!(2g-2-k)!}+\frac{B_{k-1}}{(k-1)!(2g-2-k)!} \bigg) \\
&= -\sum_{k=0}^{2g-2} \frac{(k-1)B_k}{k!(2g-2-k)!}-\sum_{k=0}^{2g-3}\frac{(2g-2-k)B_k}{k!(2g-2-k)!} \\
&=  -\frac{2g-3}{(2g-2)!}\sum_{k=0}^{2g-2} \binom{2g-2}{k}B_k= -\frac{(2g-3)B_{2g-2}}{(2g-2)!}
\end{align}

Hence, the final expression reads
\begin{equation}
[\deg_{\nu,\mu} = 0]h_{g;\mu,\nu}^{\leq} = -\frac{n \left(2g - 3 + m + n \right)!(2g-3)B_{2g-2}}{(2g-2)!} \delta_{g\geq 1},
\end{equation}
which shows that the lowest degree does not vanish for $g \geq 1$.

\subsection{Piecewise polynomiality for triply mixed Hurwitz numbers}\label{piecewisemixed}

After having developed the necessary tools in \cref{piecewisemonotone}, we use the same approach to prove piecewise polynomiality of triply mixed Hurwitz numbers in this section. We use the expression for triply mixed Hurwitz numbers in \cref{eq:mixedhurwitzgen}.

 
 \begin{theorem}[Piecewise polynomiality for triply mixed Hurwitz]\label{thm:mixedpiecewise}
Let $p,q,r$ be non-negative integers and let $m$, $n$ be positive integers such that $(g, n+m) \neq (0,2)$, where \( p+q+r = 2g-2 + m+ n\). Let $\mf{c}$ be a chamber of the hyperplane arrangement $\mc{W}(m,n)$.
Then there exist polynomials $P_{p,q,r}^{\mf{c}; (2),\leq,<}$ of degree \( 4g-3+m+n \) in $m+n$ variables such that 
\begin{equation}
h^{(2),\leq,<}_{p,q,r;\mu,\nu}=P_{p,q,r}^{\mf{c}; (2),\leq,<}(\mu,\nu)
\end{equation}
 for all $(\mu,\nu)\in \mf{c}$.
\end{theorem}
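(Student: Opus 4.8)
The plan is to run the proof of \cref{thm:piecewise} essentially verbatim, now feeding the expression of \cref{eq:mixedhurwitzgen} into Johnson's algorithm in place of \cref{eq:h_as_wedge,eq:sigma_as_wedge}. The only structural novelty is that the three kinds of intermediate ramification are now bundled into the single argument $Z_j \coloneqq X\nu_j + Yy_j + Zz_j$ of the operator $\mc E_{-\nu_j}$; I write $Z_{[n]} = \sum_{j} Z_j = Xd + Yy_{[n]} + Zz_{[n]}$ with $d = |\mu| = |\nu|$. Recall from the remark after \cref{eq:mixedhurwitzgen} that extracting $[X^pY^qZ^r\underline y^v\underline z^w]$ forces $q = |v|$ and $r = |w|$.

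\emph{Decomposition.} First I would apply \cref{eq:Joh_decomp} to the correlator $\dcor{\prod_i \mc E_{\mu_i}(0)\prod_j \mc E_{-\nu_j}(Z_j)}$ appearing in \cref{eq:mixedhurwitzgen}, after the formal substitution $z_j \mapsto Z_j$. The set of commutation patterns $CP^{\mf c}$ depends only on the signs of the energies $|\mu_I| - |\nu_J|$, hence only on the chamber $\mf c$ and not on the arguments of the operators. The decomposition is therefore literally that of \cref{theorem:Joh_alg}, writing the correlator as $\tfrac{1}{\varsigma(Z_{[n]})}\sum_{P \in CP^{\mf c}} \prod_\ell \bvs{I^P_\ell}{J^P_\ell}{K^P_\ell}{L^P_\ell}$, now with each $\varsigma'$ evaluated on the $Z_j$.

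\emph{Polynomiality.} Next I would establish polynomiality exactly as in \cref{polymonotonefactor}. The product $\prod_\ell \varsigma'$ is divisible by $Z_{[n]}$, since the last commutation step produces $\varsigma(aZ_{[n]})$ for a suitable $a$; hence $\tfrac{1}{\varsigma(Z_{[n]})}\sum_P \prod_\ell \varsigma'$ is a formal power series in the $Z_j$, and so in $X, Y, Z, \underline y, \underline z$ after substitution. Its coefficients, together with those of $\mc S(Yy_j)^{\nu_j - 1}$ and $\mc S(Zz_j)^{-\nu_j - 1}$ and the per-index prefactors $\tfrac{(\nu_j + v_j - 1)!}{(\nu_j - w_j)!}$, are polynomial in the entries of $\mu$ and $\nu$; the only exceptions are the indices $j$ with $v_j = w_j = 0$, for which the prefactor degenerates to $1/\nu_j$. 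These are cleared by divisibility precisely as the $v_j = 0$ case in \cref{thm:piecewise}: commuting $\mc E_{\mu_i}(0)$ yields a factor $\varsigma(\mu_i Z_L)$ divisible by $\mu_i$, while for $v_j = w_j = 0$ the operator is $\mc E_{-\nu_j}(X\nu_j)$, whose commutation produces $\varsigma\big(\nu_j[(\mu_K - \nu_L)X - Z_L]\big)$, so the $[\underline y^0 \underline z^0]$-part of the corresponding summand is divisible by $\nu_j$. Hence $h^{(2),\le,<}_{p,q,r;\mu,\nu}$ agrees on $\mf c$ with a genuine polynomial in the entries of $\mu$ and $\nu$.

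\emph{Degree, the main point.} The one genuinely new computation, and the step I expect to be the crux, is the degree count, which is where the coupling $X\nu_j$ bites. As in \cref{polymonotonefactor}, the coefficient of $\prod_j Z_j^{e_j}$ in $\tfrac{1}{\varsigma(Z_{[n]})}\sum_P \prod_\ell \varsigma'$ has degree at most $\sum_j e_j + 1$. Substituting $Z_j = X\nu_j + Yy_j + Zz_j$ and extracting $[X^pY^qZ^r\underline y^v\underline z^w]$, each of the $p$ factors $X$ drags along one factor $\nu_j$, so the coefficient of $[X^pY^qZ^r\underline y^v\underline z^w]$ in $\prod_j \mc S(Yy_j)^{\nu_j-1}\mc S(Zz_j)^{-\nu_j-1}\,\tfrac{1}{\varsigma(Z_{[n]})}\sum_P\prod_\ell\varsigma'$ has degree at most $2p + q + r + 1$ (the $\mc S$-factors can only lower this). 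Multiplying through by $\prod_j \nu_j$ turns the prefactor into a polynomial of degree $q + r$, so $\prod_i \mu_i \prod_j \nu_j\, h^{(2),\le,<}_{p,q,r;\mu,\nu}$ has degree at most $(2p+q+r+1)+(q+r) = 2b+1$. Dividing by $\prod_i\mu_i\prod_j\nu_j$ lowers the degree by $m+n$, which gives the claimed bound $2b + 1 - (m+n) = 4g - 3 + m + n$; the degenerate case $(g, n+m) = (0,2)$ is excluded exactly as before. I expect the only delicate point to be this bookkeeping of the $\nu_j$-weight carried by the simple-ramification variable $X$; once it is accounted for, the argument runs identically to the monotone and strictly monotone cases of \cref{thm:piecewise}.
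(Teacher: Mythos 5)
Your proposal is correct and follows essentially the same route as the paper's proof: both run Johnson's algorithm on the correlator of \cref{eq:mixedhurwitzgen} using only the chamber-dependence of the commutation patterns, cancel the $1/\varsigma$ pole against the final commutator, clear the $1/\mu_i$ and $1/\nu_j$ factors by the same divisibility arguments (including the $v_j=w_j=0$ case via the operator $\mc{E}_{-\nu_j}(X\nu_j)$), and get the degree bound $4g-3+m+n$ from the same key observation that every power of $X$ drags along a factor $\nu_j$ and hence carries weight two. The only discrepancy is an immaterial sign in your commutator factor $\varsigma\big(\nu_j[(\mu_K-\nu_L)X - Z_L]\big)$ (it should be $+Z_L$), which does not affect the divisibility by $\nu_j$.
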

\begin{remark}
Notice that \cref{thm:piecewise} is a special case of this theorem, obtained by setting \( p\) and either \( r\) or \( q\) to zero. Likewise, the mixed cases of two out of the three kinds of Hurwitz number can be obtained by setting the third parameter to zero. In particular, we recover \cref{theorem:mixhur} by setting $r=0$.
\end{remark}

\begin{proof}
In \cref{eq:mixedhurwitzgen}, let us first look at a single factor
\begin{equation}
[X^p\vec{y}^v \vec{z}^w] \prod_{j=1}^n \frac{\mc{S}(y_j)^{\nu_j-1}}{\mc{S}(z_j)^{\nu_j+1}} \bigg\< \prod_{i=1}^m \mc{E}_{\mu_i}(0) \prod_{j=1}^n \mc{E}_{-\nu_j} (X \nu_j +y_j+z_j) \bigg \>\,,
\end{equation}
where \( |v| = q\) and \( |w| = r\).\par
Because \( \mc{S} (z)\) is an even analytic function with constant term \( 1\) and non-zero coefficient of \( z^2 \), the coefficient of \( z^{2t} \) in both \( \mc{S}(z)^{\nu-1} \) and \( \mc{S}(z)^{-\nu-1} \) is a polynomial in \( \nu \) of degree \( t\). On the other hand, the commutations produce factors where every factor of \( y \) or \( z \) brings a linear polynomial in \( \nu \) and \( \mu \) and every factor of \( X\) brings a quadratic polynomial. As the final correlator of the commutation pattern still gives a factor \( \varsigma (X\nu_{[n]} + y_{[n]} + z_{[n]})^{-1} \), this complete factor gives a polynomial in \( \mu \) and \( \nu \) of degree \( 2p + q+r+1\).\par
The correlator can be calculated using Johnson's algorithm, where the set of commutation patterns is fixed by the chamber \( \mathfrak{c} \). Every commutation gives a factor of \( \varsigma \) with a certain argument linear or quadratic in \( \mu \) and \( \nu \), until we end up with 
\begin{equation}
\Big\< \mc{E}_a(X\nu_{I} + y_{I} + z_{I}) \mc{E}_{-a}(X\nu_{[n]\setminus I} + y_{[n]\setminus I} + z_{[n]\setminus I})\Big\> 
\end{equation}
for some \( a \geq 0\) and \( I \subset [n]\). By the commutation rules, this is equal to
\begin{equation}
\varsigma \big( a(X\nu_{[n]} + y_{[n]} + z_{[n]}) \big) \Big\<  \mc{E}_{0}(X\nu_{[n]} + y_{[n]} + z_{[n]})\Big\>  =\frac{\varsigma \big( a(X\nu_{[n]} + y_{[n]} + z_{[n]}) \big)}{\varsigma (X\nu_{[n]} + y_{[n]} + z_{[n]} )}\,.
\end{equation}
The possible pole coming from the denominator is cancelled by the numerator, so this entire term is polynomial in \( \mu \) and \( \nu \).\par
Furthermore, this polynomial is divisible by \( \mu_i \), as the operator \( \mc{E}_{\mu_i}(0) \) must be commuted with some negative-energy operator \( \mc{E}_{-a}(x)\), producing a factor \( \varsigma (\mu_i x)\).\par
Also, the factor \( \frac{(\nu_j + v_j-1)!}{(\nu_j -w_j)!} \) is polynomial in \( \nu \) -- of degree \( v_j + w_j -1\) -- unless \( v_j = w_j = 0\), in which case it is \( \frac{1}{\nu_j}\). However, in this case we have the operator \( \mc{E}_{-\nu_j}(X\nu_j) \), which must commute to the left, and will always yield some factor \( \varsigma (\nu_j x) \) in the commutator. Hence, the entire term
\begin{equation}
[X^p]\prod_{j=1}^n \frac{(\nu_j+v_j -1)!}{\mu_j (\nu_j-w_j)!} [\vec{y}^v \vec{z}^w] \prod_{j=1}^n \frac{\mc{S}(y_j)^{\nu_j-1}}{\mc{S}(z_j)^{\nu_j+1}} \bigg\< \prod_{i=1}^m \mc{E}_{\mu_i}(0) \prod_{j=1}^n \mc{E}_{-\nu_j} (X \nu_j +y_j+z_j) \bigg \>
\end{equation}
is polynomial in \( \mu \) and \( \nu \).\par
To calculate the coefficient of \( X^pY^qZ^r \) in \cref{eq:mixedhurwitzgen}, we take a \emph{finite} sum over such polynomials, where the number of summand is independent of \( \mu \) and \( \nu \), as the sum runs over non-negative \( \{ v_i,w_i \mid 1 \leq i \leq n\} \) such that \( \sum_i v_i = q \) and \( \sum_i w_i = r\).\par
The maximal degree of this polynomial is then
\begin{equation}
(2p+q+r+1) + \sum_{j=1}^n (v_j+w_j -1 ) - m = 2(p+q+r)+1-n-m = 4g-3+m+n\,,
\end{equation}
which proves the theorem.
\end{proof}

The lower bound of the polynomial corresponds to the power of $X$ that we choose in the polynomial $h_{g;\mu,\nu}^{<, \le, (2)}(X,Y,Z)$, since the powers of $X$ do not come from any other expansion.


\section{Wall-crossing formulae}\label{sec:wall}

In the previous sections, we have given an explicit way of computing polynomials representing strictly and weakly monotone and simple Hurwitz numbers, or any mix of the three, within a chamber of the hyperplane arrangement. In this section, we show how these different polynomials are connected via wall-crossing formulas, expressing the difference between generating functions in adjacent chambers recursively as a product of two generating functions of Hurwitz numbers of similar kind.

\subsection{Wall-crossing formulae for dessins d'enfant and monotone Hurwitz numbers}
In this section, we study the wall-crossing behaviour of the Hurwitz numbers $h^{\le}_{g;\mu,\nu}$ and \( h^<_{g;\mu,\nu}\). We write \( h_{g;\mu, \nu}^{\bullet} \) in the following to mean either of them, and similarly for related quantities. Let $\mf{c}_1$ and $\mf{c}_2$ be two chambers in the hyperplane arrangement given by $\mathcal{W}$ that are separated by the wall $\delta\coloneq \mu_I-\nu_J=0$. Without loss of generality, we assume that $\delta>0$ on $\mf{c}_2$ and $\delta<0$ on $\mf{c}_1$. Let $p^{\mf{c}_i}_{g;\mu,\nu}$ be the polynomial expressing $h_{g;\mu,\nu}^{\bullet}$ in $\mf{c}_i$. The goal of this section is to compute the wall-crossing at $\delta=0 $ between \( \mf{c}_1 \) and \( \mf{c}_2 \)
\begin{equation}
WC_{\delta}:=p^{\mf{c}_2}_{g;\mu,\nu}-p^{\mf{c}_1}_{g;\mu,\nu}\in\mathbb{Q}[\mu,\nu]\,.
\end{equation}
Our approach to the wall-crossing is motivated by the expression of $h_{g;\mu,\nu}^{\bullet}$ in \cref{theorem:Joh_alg}. 

\begin{notation}
For a partition $\mu$, a subset $I\subset\{1,\dots,m\}$, and a wall $\delta =0$, we denote the partition $(\mu_i)_{i\in I}$ by $\mu^I$ and the partition $(\mu,\delta)$ by $\mu+\delta$, whereas the notation $\mu_I$ is still reserved for $\sum_{i \in I} \mu_i$. Moreover, for a collection of variables $\underline{u}=u_1,\dots,u_n$ and a subset $J\subset\{1,\dots,n\}$, we denote the collection $(u_j)_{j\in J}$ by $u^J$.
\end{notation}

\begin{definition}
Let $\mu,\nu$ be ordered partitions of the same natural number. We define the \textit{refined monotone generating series} as
\begin{equation}
\mathcal{H}^{\le}_{\mu,\nu}(\underline{u},\underline{z})=\sum_{v_1,\dots,v_n=0}^{\infty}u_1^{v_1}\cdots u_n^{v_n}\prod_{j=1}^n\frac{(v_j+\nu_j-1)!}{(\nu_j-1)!}\prod_{j=1}^n\mathcal{S}(z_j)^{\nu_j-1}\frac{\left\langle\prod_{i=1}^{m}\mathcal{E}_{\mu_i}(0)\prod_{j=1}^n\mathcal{E}_{-\nu_j}(z_j)\right\rangle}{\prod_{i=1}^m\mu_i\prod_{j=1}^n\nu_j}\,.\label{equ:refined}
\end{equation}
Similarly, we define the \textit{refined Grothendieck dessins d'enfant generating series} as
\begin{equation}
\mathcal{H}^{<}_{\mu,\nu}(\underline{u},\underline{z})=\sum_{\substack{v_1,\dots,v_n=0 \\ 0 \leq v_i \leq \nu_i}}^{\infty}u_1^{v_1}\cdots u_n^{v_n}\prod_{j=1}^n\frac{\nu_j!}{(\nu_j-v_j)!}\prod_{j=1}^n\mathcal{S}(z_j)^{-\nu_j-1}\frac{\left\langle\prod_{i=1}^{m}\mathcal{E}_{\mu_i}(0)\prod_{j=1}^n\mathcal{E}_{-\nu_j}(z_j)\right\rangle}{\prod_{i=1}^m\mu_i\prod_{j=1}^n\nu_j}\,.
\end{equation}
\end{definition}

The following lemma follows from \cref{eq:tuamadre}.
\begin{lemma}\label{hasrefinedgencoef}
Let $g$ be a non-negative integer, $\mu,\nu$ ordered partitions of the same natural number and $b=2g-2+\ell(\mu)+\ell(\nu)$. Then
\begin{equation}
h^{\bullet}_{g;\mu,\nu}=\sum_{\substack{v_1,\dots,v_n\in\mathbb{Z}_{\ge0}\\ |\vec{v}|=b}}[z_1^{v_1}\cdots z_n^{v_n}] [u_1^{v_1}\cdots u_n^{v_n}]\mathcal{H}^{\bullet}_{\mu,\nu}(\underline{u},\underline{z})\,.
\end{equation}
\end{lemma}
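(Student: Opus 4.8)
The plan is to recognise \cref{hasrefinedgencoef} as a pure repackaging of \cref{eq:tuamadre} (and its strictly monotone analogue \cref{eq:tuasorella}), in which the auxiliary variables $\underline{u}=u_1,\dots,u_n$ serve only to record the summation indices $v_1,\dots,v_n$ by coordinate. The decisive structural observation is that in \cref{equ:refined} the variables $u_j$ enter \emph{only} through the monomial $u_1^{v_1}\cdots u_n^{v_n}$ of the summation, whereas the correlator $\dcor{\prod_{i}\mathcal{E}_{\mu_i}(0)\prod_{j}\mathcal{E}_{-\nu_j}(z_j)}$ and the factors $\mathcal{S}(z_j)^{\nu_j-1}$ depend only on $\underline{z}$. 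Thus the $u$- and $z$-dependence is separable, and extracting coefficients in the two families of variables commutes with each other and with the sum.

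First I would extract $[u_1^{v_1}\cdots u_n^{v_n}]$ from \cref{equ:refined}: since $u_j$ occurs only as $u_j^{v_j}$, this coefficient simply selects the single term of the sum with that multi-index, giving
\begin{equation}
[u_1^{v_1}\cdots u_n^{v_n}]\,\mathcal{H}^{\le}_{\mu,\nu}(\underline{u},\underline{z})=\frac{1}{\prod_{i}\mu_i\prod_{j}\nu_j}\prod_{j=1}^n\frac{(v_j+\nu_j-1)!}{(\nu_j-1)!}\prod_{j=1}^n\mathcal{S}(z_j)^{\nu_j-1}\dcor{\prod_{i=1}^m\mathcal{E}_{\mu_i}(0)\prod_{j=1}^n\mathcal{E}_{-\nu_j}(z_j)}\,.
\end{equation}
Applying $[z_1^{v_1}\cdots z_n^{v_n}]$ next passes the factorial prefactor through unchanged (it carries no $\underline{z}$), and summing over all weak compositions $\vec{v}$ of $b$ into $n$ non-negative parts yields exactly the right-hand side of \cref{eq:tuamadre} after using $\nu_j!=\nu_j\,(\nu_j-1)!$ to reconcile the $\prod_j\nu_j$ between the two factorial conventions. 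To close the identification I would recall that the Johnson decomposition $\tfrac{1}{\varsigma(z_{[n]})}\sum_{P}\prod_{\ell}\bvs{I^P_\ell}{J^P_\ell}{K^P_\ell}{L^P_\ell}$ appearing in \cref{eq:tuamadre} is by construction equal to the correlator $\dcor{\prod_{i}\mathcal{E}_{\mu_i}(0)\prod_{j}\mathcal{E}_{-\nu_j}(z_j)}$ in the chosen chamber $\mathfrak{c}$; substituting it back makes the two expressions literally coincide, so the sum computes $h^{\le}_{g;\mu,\nu}$. The strictly monotone case is verbatim the same argument applied to \cref{eq:tuasorella} and $\mathcal{H}^{<}_{\mu,\nu}$, replacing $\mathcal{S}(z_j)^{\nu_j-1}$ by $\mathcal{S}(z_j)^{-\nu_j-1}$ and $\tfrac{(v_j+\nu_j-1)!}{(\nu_j-1)!}$ by $\tfrac{\nu_j!}{(\nu_j-v_j)!}$, noting that the latter vanishes for $v_j>\nu_j$ so that the range restriction $0\le v_j\le\nu_j$ is automatic under the diagonal extraction.

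The hard part is not analytic but a matter of bookkeeping: reconciling the single grading variable $u$ of \cref{eq:h_as_wedge} and \cref{theorem:Joh_alg}, where $[u^b]$ enforces total degree $b$, with the refined $n$-variable packaging, where each $u_j$ tracks $v_j$ separately. The observation that makes this transparent is homogeneity: in $\prod_j\mathcal{S}(uz_j)^{\nu_j-1}\dcor{\prod_i\mathcal{E}_{\mu_i}(0)\prod_j\mathcal{E}_{-\nu_j}(uz_j)}$ the power of $u$ accompanying the monomial $z_1^{v_1}\cdots z_n^{v_n}$ equals $v_1+\cdots+v_n$, because each $z_j$ occurs only in the combination $uz_j$. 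Consequently extracting $[u^b]$ there is the same as summing $[z_1^{v_1}\cdots z_n^{v_n}]$ over $|\vec{v}|=b$, which is precisely the diagonal constraint $|\vec{v}|=b$ of \cref{hasrefinedgencoef} once the single $u$ has been refined into the coordinatewise $u_j$. Keeping careful track of this correspondence, together with the elementary identity $\nu_j!=\nu_j\,(\nu_j-1)!$, is the only point requiring attention, and it delivers the claimed formula for both $h^{\le}$ and $h^{<}$ simultaneously.
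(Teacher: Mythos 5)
Your proposal is correct and matches the paper's approach: the paper simply asserts that the lemma follows from \cref{eq:tuamadre}, and your argument spells out exactly the routine details behind that assertion (diagonal $u$-coefficient extraction, the $\nu_j!=\nu_j(\nu_j-1)!$ bookkeeping, homogeneity in $u$ versus the refined variables, and the identification of the Johnson decomposition with the correlator). Nothing is missing; you have just made explicit what the paper leaves implicit.
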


By \cref{thm:piecewise}, the polynomial expressing
\begin{equation}
\bigg\langle\prod_{i=1}^{m}\mathcal{E}_{\mu_i}(0)\prod_{j=1}^n\mathcal{E}_{-\nu_j}(z_j)\bigg\rangle\label{equ:locterm}
\end{equation}
in \cref{equ:refined} only depends on the chamber $\mf{c}$ given by $\mathcal{W}$, which motivates the following definition.

\begin{definition}
Let $\mf{c}$ be a chamber induced by the hyperplane arrangement $\mathcal{W}$ and denote by $q^{\mf{c}}(\underline{z})$ the polynomial expressing \cref{equ:locterm} in $\mf{c}$. Then we define
\begin{equation}
\mathcal{H}^{\le}_{\mu,\nu}(\mf{c},\underline{u},\underline{z})=\sum_{v_1,\dots,v_n=0}^{\infty}u_1^{v_1}\cdots u_n^{v_n}\prod_{j=1}^n\frac{(v_j+\nu_j-1)!}{(\nu_j-1)!}\prod_{j=1}^n\mathcal{S}(z_j)^{\nu_j-1}\frac{q^{\mf{c}}(\underline{z})}{\prod_{i=1}^m\mu_i\prod_{j=1}^n\nu_j}.\label{equ:refincham}
\end{equation}
and
\begin{equation}
\mathcal{H}^{<}_{\mu,\nu}(\mf{c},\underline{u},\underline{z})=\sum_{\substack{v_1,\dots,v_n=0 \\ 0 \leq v_i \leq \nu_i}}^{\infty}u_1^{v_1}\cdots u_n^{v_n}\prod_{j=1}^n\frac{\nu_j!}{(\nu_j-v_j)!}\prod_{j=1}^n\mathcal{S}(z_j)^{-\nu_j-1}\frac{q^{\mf{c}}(\underline{z})}{\prod_{i=1}^m\mu_i\prod_{j=1}^n\nu_j}.
\end{equation}
Let $\delta=\mu_I-\nu_J$ for some fixed $I\subset\{1,\dots,m\},J\subset\{1,\dots,n\}$. This defines a wall in $\mathcal{W}$ by $\delta=0$. Let $\mf{c}_1$ and $\mf{c}_2$ be chambers which are seperated by $\delta=0$ and contain $\delta=0$ as a codimension one subspace. Then we define the wall-crossings by
\begin{equation}
\mathcal{WC}^\bullet_{\delta}(\underline{u},\underline{z})=\mathcal{H}^\bullet_{\mu,\nu}(\mf{c}_2,\underline{u},\underline{z})-\mathcal{H}^\bullet_{\mu,\nu}(\mf{c}_1,\underline{u},\underline{z})\,.\label{equ:wacro}
\end{equation}
\end{definition}

The following lemma follows from \cref{hasrefinedgencoef}.
\begin{lemma}
Let $g$ be a non-negative integer, $\mu,\nu$ ordered partitions of the same natural number and $b=2g-2+\ell(\mu)+\ell(\nu)$. Then
\begin{equation}
WC^\bullet_{\delta}=\sum_{\substack{v_1,\dots,v_n\in\mathbb{Z}_{\ge0}\\ |\vec{v}|=b}}[z_1^{v_1}\cdots z_n^{v_n}] [u_1^{v_1}\cdots u_n^{v_n}]\mathcal{WC}^\bullet_{\delta}(\underline{u},\underline{z})\,.
\end{equation}
\end{lemma}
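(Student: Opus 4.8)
The plan is to apply \cref{hasrefinedgencoef} separately in each of the two chambers bordering the wall and then exploit the $\mathbb{Q}$-linearity of coefficient extraction. The whole argument is a bookkeeping reduction to the chamber-wise statement already established.

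First I would fix a single chamber $\mf{c}$ and observe that, for $(\mu,\nu)\in\mf{c}$, the correlator \cref{equ:locterm} equals the polynomial $q^{\mf{c}}(\underline{z})$ by the very definition of $q^{\mf{c}}$. Substituting this into \cref{equ:refined} shows that the genuine refined series and its chamber version coincide on $\mf{c}$, that is, $\mathcal{H}^{\bullet}_{\mu,\nu}(\underline{u},\underline{z})=\mathcal{H}^{\bullet}_{\mu,\nu}(\mf{c},\underline{u},\underline{z})$ for $(\mu,\nu)\in\mf{c}$. Applying \cref{hasrefinedgencoef} then yields
\begin{equation}
h^{\bullet}_{g;\mu,\nu}=\sum_{\substack{v_1,\dots,v_n\in\mathbb{Z}_{\ge0}\\ |\vec{v}|=b}}[z_1^{v_1}\cdots z_n^{v_n}][u_1^{v_1}\cdots u_n^{v_n}]\mathcal{H}^{\bullet}_{\mu,\nu}(\mf{c},\underline{u},\underline{z})
\end{equation}
for every $(\mu,\nu)\in\mf{c}$.

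The next step is to identify the right-hand side with $p^{\mf{c}}_{g;\mu,\nu}$. Since $q^{\mf{c}}(\underline{z})$ is polynomial in $\underline{z}$ and in the entries of $\mu$ and $\nu$, and since the prefactors $\frac{(v_j+\nu_j-1)!}{(\nu_j-1)!}$ (respectively $\frac{\nu_j!}{(\nu_j-v_j)!}$) and the coefficients of $\mathcal{S}(z_j)^{\nu_j-1}$ (respectively $\mathcal{S}(z_j)^{-\nu_j-1}$) are polynomial in $\nu_j$ — as recorded in the proof of \cref{thm:piecewise} — the coefficient extraction above produces a polynomial in the entries of $\mu$ and $\nu$. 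This polynomial agrees with $h^{\bullet}_{g;\mu,\nu}$, hence with $p^{\mf{c}}_{g;\mu,\nu}$, on all of $\mf{c}$; as the chamber is Zariski-dense in its affine span, the two polynomials coincide, giving $p^{\mf{c}}_{g;\mu,\nu}=\sum_{|\vec{v}|=b}[z_1^{v_1}\cdots z_n^{v_n}][u_1^{v_1}\cdots u_n^{v_n}]\mathcal{H}^{\bullet}_{\mu,\nu}(\mf{c},\underline{u},\underline{z})$.

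Finally I would take the difference of this identity over the two chambers $\mf{c}_2$ and $\mf{c}_1$ bordering the wall $\delta=0$. Because the operators $[z^{v}]$ and $[u^{v}]$ are $\mathbb{Q}$-linear they commute with subtraction, so
\begin{equation}
WC^{\bullet}_{\delta}=p^{\mf{c}_2}_{g;\mu,\nu}-p^{\mf{c}_1}_{g;\mu,\nu}=\sum_{\substack{|\vec{v}|=b}}[z_1^{v_1}\cdots z_n^{v_n}][u_1^{v_1}\cdots u_n^{v_n}]\bigl(\mathcal{H}^{\bullet}_{\mu,\nu}(\mf{c}_2,\underline{u},\underline{z})-\mathcal{H}^{\bullet}_{\mu,\nu}(\mf{c}_1,\underline{u},\underline{z})\bigr),
\end{equation}
which is exactly $\sum_{|\vec{v}|=b}[z_1^{v_1}\cdots z_n^{v_n}][u_1^{v_1}\cdots u_n^{v_n}]\mathcal{WC}^{\bullet}_{\delta}(\underline{u},\underline{z})$ by the definition \cref{equ:wacro}. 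The only point requiring genuine care, rather than a real obstacle, is the identification of the coefficient-extracted expression with the chamber polynomial $p^{\mf{c}}$: this rests on the uniqueness of polynomial representatives guaranteed by the Zariski-density of chambers, while everything else is the formal linearity of coefficient extraction.
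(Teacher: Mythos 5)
Your proof is correct and follows essentially the same route as the paper, which simply remarks that the lemma ``follows from'' the chamber-wise statement (\cref{hasrefinedgencoef}): restrict to each chamber where the refined series coincides with its chamber version, identify the extracted polynomial with $p^{\mf{c}}_{g;\mu,\nu}$ by uniqueness of polynomial representatives on a chamber, and subtract using linearity of coefficient extraction. Your explicit treatment of the identification step (via density of chamber lattice points) is exactly the detail the paper leaves implicit.
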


The main result of this subsection is the following theorem.

\begin{theorem}
\label{theorem:wallcrossingmon}
Let $\mu,\nu$ be ordered partitions of the same positive integer and let $\delta=\sum_{i\in I}\mu_i-\sum_{j\in J}\nu_j$. Then we have the following recursive structures
\begin{equation}
\mathcal{WC}^\le_\delta (\underline{u},\underline{z})= \delta^2\frac{\varsigma (z_J)\varsigma (z_{J^c})\varsigma (\delta z_{[n]})}{\varsigma (\delta z_J)\varsigma (\delta z_{J^c} )\varsigma (z_{[n]})} [(u')^0]\mathcal{H}^{\le}_{\mu^I,\nu^J+\delta}(\underline{u}^J,u',\underline{z}^J,0)\mathcal{H}^{\le}_{\mu^{I^c}+\delta,\nu^{J^c}}(\underline{u}^{J^c},\underline{z}^{J^c})
\end{equation}
and
\begin{equation}
\mathcal{WC}^{<}_{\delta}(\underline{u},\underline{z})= \delta^2\frac{\varsigma (z_J)\varsigma (z_{J^c})\varsigma (\delta z_{[n]})}{\varsigma (\delta z_J)\varsigma (\delta z_{J^c} )\varsigma (z_{[n]})} [(u')^0]\mathcal{H}^{<}_{\mu^I,\nu^J+\delta}(\underline{u}^J,u',\underline{z}^J,0)\mathcal{H}^{<}_{\mu^{I^c}+\delta,\nu^{J^c}}(\underline{u}^{J^c},\underline{z}^{J^c})\,.
\end{equation}
Here, the argument \( 0\) is the \( z\)-variable related to \( \delta \) in \( \mc{H}_{\mu^I,\nu^J+\delta} \).
\end{theorem}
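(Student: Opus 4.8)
The plan is to strip away the combinatorial weights, which are chamber-independent, thereby reducing both identities to a single wall-crossing formula for the bare vacuum expectation, and then to establish the latter by comparing Johnson's algorithm on the two sides of the wall.

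\emph{Reduction.} In \cref{equ:refincham} and its strict analogue, the chamber $\mf{c}$ enters only through the polynomial $q^{\mf{c}}(\underline z)$ representing \cref{equ:locterm}; the factors $\frac{(v_j+\nu_j-1)!}{(\nu_j-1)!}\mc{S}(z_j)^{\nu_j-1}$ (resp.\ $\frac{\nu_j!}{(\nu_j-v_j)!}\mc{S}(z_j)^{-\nu_j-1}$) and the normalisation $(\prod_i\mu_i\prod_j\nu_j)^{-1}$ are the same in $\mf{c}_1$ and $\mf{c}_2$. Hence $\mc{WC}^\bullet_\delta$ is obtained by applying these weights to the bare wall-crossing $Q_\delta(\underline z):=q^{\mf{c}_2}(\underline z)-q^{\mf{c}_1}(\underline z)$. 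Writing out the two series on the right-hand side, using $\mc{S}(0)=1$ and noting that $[(u')^0]$ sets the multiplicity of the new part $\delta$ to zero (so its weight is $1$), while the two new parts of size $\delta$ produce exactly the denominator cancelled by $\delta^2$, one checks that both claimed identities are equivalent to the single statement
\[
Q_\delta(\underline z)=\frac{\varsigma (z_J)\varsigma (z_{J^c})\varsigma (\delta z_{[n]})}{\varsigma (\delta z_J)\varsigma (\delta z_{J^c} )\varsigma (z_{[n]})}\,S_I\,S_{I^c},
\]
where $S_I:=\bigl\langle\prod_{i\in I}\mc{E}_{\mu_i}(0)\prod_{j\in J}\mc{E}_{-\nu_j}(z_j)\,\mc{E}_{-\delta}(0)\bigr\rangle$ and $S_{I^c}:=\bigl\langle\mc{E}_{\delta}(0)\prod_{i\in I^c}\mc{E}_{\mu_i}(0)\prod_{j\in J^c}\mc{E}_{-\nu_j}(z_j)\bigr\rangle$ are the sub-correlators attached to the data $(\mu^I,\nu^J+\delta)$ and $(\mu^{I^c}+\delta,\nu^{J^c})$. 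Both the monotone and the strictly monotone case thus reduce to this one bare factorisation.

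\emph{Bare wall-crossing.} I would apply \cref{eq:Joh_decomp} in both chambers and subtract. Because $\mu_{[m]}=\nu_{[n]}=d$, crossing the wall $\delta=\mu_I-\nu_J=0$ flips the sign of exactly the two complementary combined energies $\mu_I-\nu_J=\delta$ and $\mu_{I^c}-\nu_{J^c}=-\delta$, while every other $\mu_{I'}-\nu_{J'}$ keeps its sign in a neighbourhood of the wall. Consequently a commutation pattern common to $CP^{\mf{c}_1}$ and $CP^{\mf{c}_2}$ contributes the same product of $\varsigma'$-factors to $q^{\mf{c}_1}$ and $q^{\mf{c}_2}$ and cancels in $Q_\delta$; the patterns in the symmetric difference are precisely those whose continuation depends on the sign of $\delta$, i.e.\ those that at some stage form the operator $\mc{E}'(I,J)=\mc{E}_\delta(z_J)$. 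For such a pattern: in $\mf{c}_2$ (where $\delta>0$) this operator has positive energy and is commuted rightwards, giving a non-zero contribution, whereas in $\mf{c}_1$ (where $\delta<0$) it has negative energy, becomes the left-most negative operator and annihilates the covacuum $\langle 0|$, giving $0$. Thus $Q_\delta$ equals the $\mf{c}_2$-value of this $\delta$-channel.

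\emph{Factorisation.} In the $\delta$-channel the $(I,J)$-operators merge into $\mc{E}_\delta(z_J)$ with a total coefficient $M_I$, independently of the complementary operators, which merge into $\mc{E}_{-\delta}(z_{J^c})$ with coefficient $M_{I^c}$; the two are joined by the single contraction $\bigl\langle\mc{E}_\delta(z_J)\mc{E}_{-\delta}(z_{J^c})\bigr\rangle=\varsigma(\delta z_{[n]})/\varsigma(z_{[n]})$ coming from \cref{Ecomm}, so that $Q_\delta=\frac{\varsigma(\delta z_{[n]})}{\varsigma(z_{[n]})}M_IM_{I^c}$. It then remains to identify these merge coefficients with the sub-correlators through the auxiliary identity $S_I=M_I\,\bigl\langle\mc{E}_\delta(z_J)\mc{E}_{-\delta}(0)\bigr\rangle=M_I\,\varsigma(\delta z_J)/\varsigma(z_J)$, and symmetrically for $S_{I^c}$: running the algorithm on $S_I$, the inserted operator $\mc{E}_{-\delta}(0)$ can contract to energy zero only after the whole $(I,J)$-block has merged through the single $\delta$-channel. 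Substituting $M_I=\frac{\varsigma(z_J)}{\varsigma(\delta z_J)}S_I$ and $M_{I^c}=\frac{\varsigma(z_{J^c})}{\varsigma(\delta z_{J^c})}S_{I^c}$ gives the bare factorisation, and re-dressing with the monotone (resp.\ strict) weights yields both formulae at once.

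\emph{Main obstacle.} The delicate part is the bookkeeping of the last two steps: showing that the difference localises on the $\delta$-channel (which relies on no combined energy other than $\pm\delta$ changing sign across the wall) and controlling the $\varsigma$-prefactors in the channel factorisation, in particular the auxiliary identity relating $M_I$ to $S_I$. This is exactly Johnson's wall-crossing argument of \cite{johnson2015}, but carried out with general variables $z_j$ in place of the simple-Hurwitz specialisation $z_j=u\nu_j$; it is this extra generality that makes the same computation apply verbatim to the monotone, strictly monotone and, through \cref{eq:mixedhurwitzgen}, the mixed cases.
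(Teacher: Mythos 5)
Your proposal is correct and follows essentially the same route as the paper's proof: your reduction to the bare difference \(q^{\mathfrak{c}_2}(\underline{z})-q^{\mathfrak{c}_1}(\underline{z})\), the localisation of that difference on commutation patterns forming \(\mathcal{E}_{\delta}(z_J)\), and the identification of your merge coefficients \(M_I\), \(M_{I^c}\) with the sub-correlators via the auxiliary insertions of \(\mathcal{E}_{-\delta}(0)\) and \(\mathcal{E}_{\delta}(0)\) (using that these can only enter the very last commutation in a chamber bordering the wall) are precisely the paper's computation of its quantities \(T_1\) and \(T_2\). The only notable differences are cosmetic: you handle the monotone and strictly monotone cases uniformly from the start, and where you assert that the \(\delta\)-channel contributes zero in \(\mathfrak{c}_1\), the paper makes this rigorous by commuting \(\mathcal{E}_{\delta}(z_J)\) leftwards in both chambers so that all commutator terms cancel pairwise and only the term reaching the covacuum survives the subtraction—exactly the bookkeeping you flag as the main obstacle.
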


\begin{proof}
Both formulae are derived by similar calculations, so we only prove the recursive structure for $\mathcal{WC}^{\le}_{\delta}$. The strategy of the proof consists of comparing the generating series $\mathcal{WC}^{\le}_{\delta}$ and $\mathcal{H}^{\le}_{\mu^I,\nu^J+\delta}(\underline{u}^J,\underline{z}^J,z')\mathcal{H}^{\le}_{\mu^{I^c}+\delta,\nu^J}(\underline{u}^{J^c},\underline{z}^{J^c})$, using Johnson's algorithm. We start by studying $\mathcal{WC}^{\le}_{\delta}$. Substituting \cref{equ:refincham} into \cref{equ:wacro}, we obtain
\begin{equation}
\mathcal{WC}^{\le}_{\delta}=\sum_{v_1,\dots,v_n=0}^{\infty}u_1^{v_1}\cdots u_n^{v_n}\prod_{j=1}^n\frac{(v_j+\nu_j-1)!}{(\nu_j-1)!}\prod_{j=1}^n\mathcal{S}(z_j)^{\nu_j-1}\frac{q^{\mf{c}_2}(\underline{z})-q^{\mf{c}_1}(\underline{z})}{\prod_{i=1}^m\mu_i\prod_{j=1}^n\nu_j}\,.\label{equ:diff}
\end{equation}
Let us compute the difference $q^{\mf{c}_2}(z_1,\dots,z_n)-q^{\mf{c}_1}(z_1,\dots,z_n)$. This quantity is almost the same as the one appearing in the proof of the wall-crossing formula for double Hurwitz numbers in \cite[Section 4.2]{johnson2015}. We follow the idea of that proof, making the required adjustments. The main difference is that the vacuum expection 
\begin{equation}
\bigg\langle\prod_{i=1}^m\mathcal{E}_{\mu_i}(0)\prod_{j=1}^n\mathcal{E}_{-\nu_j}(z_j)\bigg\rangle \label{equ:wallterm}
\end{equation}
we consider depends on several variables $z_j$ (one for each entry of $\nu$), whereas the vacuum expectation in \cite{johnson2015}
\begin{equation}
\bigg\langle\prod_{i=1}^m\mathcal{E}_{\mu_i}(0)\prod_{j=1}^n\mathcal{E}_{-\nu_j}(\nu_jz)\bigg\rangle
\end{equation}
only depends on one variable $z$.\par
 Let us first observe that every commutation pattern in which no operator of energy $\delta$ is produced is a summand in both $q^{\mf{c}_2}$ and $q^{\mf{c}_1}$, and therefore contributes trivially to their difference. Thus, it is sufficient to compute the contribution of those commutation patterns producing $\delta$ energy operators. Let us choose the following ordering of operators in the vacuum expectation 
\begin{equation}
\bigg\langle\prod_{i\notin I}\mathcal{E}_{\mu_i}(0)\prod_{i\in I}\mathcal{E}_{\mu_i}(0)\prod_{j\in J}\mathcal{E}_{-\nu_j}(z_j)\prod_{j\notin J}\mathcal{E}_{-\nu_j}(z_J)\bigg\rangle\,.\label{equ:desiredvac}
\end{equation}
If a commutation pattern produces an operator of energy $\delta$, the first vacuum expectation containing that operator must be
\begin{equation}
\bigg\langle\prod_{i\notin I}\mathcal{E}_{\mu_i}(0)\mathcal{E}_{\delta}(z_J)\prod_{j\notin J}\mathcal{E}_{-\nu_j}(z_j)\bigg\rangle\,.\label{equ:wallexp}
\end{equation}
 Let $T_1$ be the product of $\varsigma $-factors the algorithm produces up to \cref{equ:wallexp}. Let us observe that, up until \cref{equ:wallexp}, the algorithm runs identically on $\mf{c}_1$ and $\mf{c}_2$. Therefore, $T_1$ divides $q^{\mf{c}_2} - q^{\mf{c}_1}$. In order to compute the quantity $T_1$, we consider the vacuum expectation
\begin{equation}
\bigg\langle\prod_{i\in I}\mathcal{E}_{\mu_i}(0)\prod_{j\in J}\mathcal{E}_{-\nu_j}(z_j)\mathcal{E}_{-\delta}(0)\bigg\rangle
\end{equation}
inside the chamber $\mf{c}_1$. We claim that the operator $\mathcal{E}_{-\delta}(0)$ cannot be involved in a commutator leading to a non-zero vacuum expectation until the very last commutator. Clearly, the commutator with any negative energy operator is equal to zero. Suppose therefore that $\mathcal{E}_{-\delta}(0)$ is involved in the commutator with some operator 
\begin{equation}
\mathcal{E}_{\mu_K -\nu_L}\big( z_L\big)\,,
\end{equation}
for subsets $K\subset I$ and $L\subset J$, where at least one is a proper subset. Because we are inside a chamber, we have $\mu_K-\nu_L \neq 0$. Hence we assume $\mu_K-\nu_L > 0$. Since we also assumed that the vacuum expectation does not vanish, the commutator must have negative energy. Hence $\mu_K - \nu_L - \delta<0$, which implies 
\begin{equation}
\delta > \mu_K - \nu_L > 0\,.
\end{equation}
This provides a lower bound for $\delta$, contradicting the fact that the chamber $\mf{c}_1$ borders $\delta=0$. We showed that every commutation pattern contributing nontrivially commutes $\mathcal{E}_{-\delta}(0)$ at the very end. Thus all the other commutators must be computed first. Therefore we can compute
\begin{equation}
\bigg\langle \prod_{i\in I}\mathcal{E}_{\mu_i}(0)\prod_{j\in J}\mathcal{E}_{-\nu_j}(z_j)\mathcal{E}_{-\delta}(0) \bigg\rangle =T_1\bigg\langle \mathcal{E}_{\delta}(z_J ) \mathcal{E}_{-\delta}(0) \bigg\rangle =T_1\varsigma(\delta z_J) \bigg\langle\mathcal{E}_0 (z_J) \bigg\rangle =T_1\frac{\varsigma(\delta z_J)}{\varsigma (z_J)}\,.
\end{equation}
Re-arranging the equation, we obtain
\begin{equation}
T_1=\frac{\varsigma (z_J)}{\varsigma(\delta z_J)}\bigg\langle\prod_{i\in I}\mathcal{E}_{\mu_i}(0)\prod_{j\in J}\mathcal{E}_{-\nu_j}(z_j)\mathcal{E}_{-\delta}(0)\bigg\rangle.
\end{equation}
The quantity in \cref{equ:desiredvac} is therefore
\begin{align}
&\bigg\langle\prod_{i\notin I}\mathcal{E}_{\mu_i}(0)\prod_{i\in I}\mathcal{E}_{\mu_i}(0)\prod_{j\in J}\mathcal{E}_{-\nu_j}(z_j)\prod_{j\notin J}\mathcal{E}_{-\nu_j}(z_j)\bigg\rangle=T_1\bigg\langle\prod_{i\notin I}\mathcal{E}_{\mu_i}(0)\mathcal{E}_{\delta}(z_J)\prod_{j\notin J}\mathcal{E}_{-\nu_j}(z_j)\bigg\rangle\\
&\qquad =\frac{\varsigma (z_J)}{\varsigma(\delta z_J)}\bigg\langle\prod_{i\in I}\mathcal{E}_{\mu_i}(0)\prod_{j\in J}\mathcal{E}_{-\nu_j}(z_j) \mathcal{E}_{-\delta}(0)\bigg\rangle\bigg\langle\prod_{i\notin I}\mathcal{E}_{\mu_i}(0)\mathcal{E}_{\delta}(z_J)\prod_{j\notin J}\mathcal{E}_{-\nu_j}(z_j)\bigg\rangle\,.
\end{align}
We will compare the last factor containing an operator of energy $\delta$ with the vacuum expectation
\begin{equation}
\bigg\langle\mathcal{E}_{\delta}(0)\prod_{i\notin I}\mathcal{E}_{\mu_i}(0)\prod_{j\notin J}\mathcal{E}_{-\nu_j}(z_j)\bigg\rangle\,.\label{equ:comparet2}
\end{equation}
Let $T_2$ be the series denoting the difference of the vacuum expectation \cref{equ:wallexp} on $\mf{c}_2$ and $\mf{c}_1$. Applying Johnson's algorithm to \cref{equ:wallexp}, the operator of energy $\delta$ would be commuted into different directions in the very first step. In order to compare the contributions in each chamber, we commute \( \mathcal{E}_{\delta}(z_J) \) to the left in both chambers, even though it has positive energy on $\mf{c}_2$. If this operator is involved in a cancelling term as we move to the left, the algorithm will run as usual in both chambers after this commutator: after the cancellation, we will have an operator \( \mathcal{E}_{\mu_{K\sqcup I}-\nu_{L\sqcup J}}\big(z_{L\sqcup J}\big)\), where at least one the subsets $K$ and $L$ is non-empty. All contributions up to the cancellation coincide in both chambers (since we chose to commute \( \mathcal{E}_{\delta}(z_J) \) to the left) and by the above argument above so do the contributions after the cancellation. Therefore, we have the same contributions in both chambers with the same sign and they cancel in the wall-crossing.\\
The key observation in computing the difference between $\mf{c}_2$ and $\mf{c}_1$ is that, whenever \( \mathcal{E}_{\delta}(z_J) \) reaches the far left, the vacuum expectation vanishes on $\mf{c}_1$ but not on $\mf{c}_2$. Thus, we obtain
\begin{equation}
T_2=\Big\langle\mathcal{E}_{\delta}(z_J)\prod_{i\notin I}\mathcal{E}_{\mu_i}(0	)\prod_{j\notin J}\mathcal{E}_{-\nu_j}(z_j)\Big\rangle\,. \label{equ:t2}
\end{equation}
Comparing \cref{equ:comparet2} and \cref{equ:t2}, the only difference is the operator on the far left. By a similar argument as in our computation of $T_1$, this vacuum expectation vanishes whenever the operator in \( \mathcal{E}_{\delta}(z_J) \) is not only involved in the last commutation. Thus, the last step of the algorithm for \cref{equ:t2} ends with
\begin{align}
\Big\langle\mathcal{E}_{\delta}(z_J) \mathcal{E}_{-\delta}(z_{J^c})\Big\rangle=\frac{\varsigma (\delta z_{[n]})}{\varsigma (z_{[n]})}
\end{align}
instead of the last step for \cref{equ:comparet2}, which ends with
\begin{align}
\Big\langle\mathcal{E}_{\delta}(0)\mathcal{E}_{-\delta}(z_{J^c})\Big\rangle=\frac{\varsigma (\delta z_{J^c})}{\varsigma (z_{J^c})}\,.
\end{align}
Therefore the following equality holds for $T_2$:
\begin{align}
T_2=\frac{\varsigma (z_{J^c})\varsigma (\delta z_{[n]})}{\varsigma (\delta z_{J^c})\varsigma (z_{[n]})}\bigg\langle\mathcal{E}_{\delta}(0)\prod_{i\notin I}\mathcal{E}_{\mu_i}(0)\prod_{j\notin J}\mathcal{E}_{-\nu_j}(z_j)\bigg\rangle\,.
\end{align}
Substituting $q^{\mf{c}_2}(z_1,\dots,z_n)-q^{\mf{c}_1}(z_1,\dots,z_n)=T_1T_2$ into \cref{equ:diff}, we obtain
\begin{align}
\begin{split}
\mathcal{WC}^{\le}_{\delta}=&\sum_{v_1,\dots,v_n=0}^{\infty}u_1^{v_1}\cdots u_n^{v_n}\prod_{j=1}^n\frac{(v_j+\nu_j-1)!}{(\nu_j-1)!}\prod_{j=1}^n\mathcal{S}(z_j)^{\nu_j-1} \frac{\varsigma (z_J)\varsigma (z_{J^c})\varsigma (\delta z_{[n]})}{\varsigma (\delta (z_J))\varsigma (\delta z_{J^c}) \varsigma (z_{[n]})}\\
&\frac{\left\langle\prod_{i\notin I}\mathcal{E}_{\mu_i}(0)\prod_{j\notin J}\mathcal{E}_{-\nu_j}(z_j)\mathcal{E}_{-\delta}(0)\right\rangle\left\langle\mathcal{E}_{\delta}(0)\prod_{i\notin I}\mathcal{E}_{\mu_i}(0)\prod_{j\notin J}\mathcal{E}_{-\nu_j}(z_j)\right\rangle}{\prod_{i=1}^m\mu_i\prod_{j=1}^n\nu_j}\,.
\end{split}
\end{align}
Comparing this extension to the following extension of the product $\mathcal{H}_{\mu^I,\nu^J+\delta}\mathcal{H}_{\mu^{I^c},\nu^{J^c}+\delta}$,
\begin{align}
\begin{split}
&\mathcal{H}^{\le}_{\mu^I,\nu^J+\delta}(\underline{u}^J,u',\underline{z}^J,0)\mathcal{H}^{\le}_{\mu^{I^c}+\delta,\nu^J}(\underline{u}^{J^c},\underline{z}^{J^c})=\sum_{v_1,\dots,v_n,v'=0}^{\infty}u_1^{v_1}\cdots u_n^{v_n}u'^{v'}\\
&\prod_{j=1}^n\frac{(v_j+\nu_j-1)!}{(\nu_j-1)!}\frac{(v'+\delta-1)!}{(\delta-1)!}\prod_{j=1}^n\mathcal{S}(z_j)^{\nu_j-1}\mc{S}(0)^{\delta-1}\\
&\frac{\left\langle\prod_{i\in I}\mathcal{E}_{\mu_i}(0)\prod_{j\in J}\mathcal{E}_{-\nu_j}(z_j)\mathcal{E}_{-\delta}(0)\right\rangle\left\langle\mathcal{E}_{\delta}(0)\prod_{i\notin I}\mathcal{E}_{\mu_i}(0)\prod_{j\notin J}\mathcal{E}_{-\nu_j}(z_j)\right\rangle}{\prod_{i=1}^m\mu_i\prod_{j=1}^n\nu_j\delta^2}\,,
\end{split}
\end{align}
we see immediately that 
\begin{equation}
\mathcal{WC}^{\le}_{\delta}(\underline{u},\underline{z})=\delta^2 \frac{\varsigma (z_J) \varsigma (z_{J^c}) \varsigma (\delta z_{[n]})}{\varsigma (\delta z_J) \varsigma(\delta z_{J^c})\varsigma (z_{[n]})} [u'^0]\mathcal{H}^{\le}_{\mu^I,\nu^J+\delta}(\underline{u}^J,u',\underline{z}^J,0)\mathcal{H}^{\le}_{\mu^{I^c}+\delta,\nu^J}(\underline{u}^{J^c},\underline{z}^{J^c})\,,
\end{equation}
as desired.
\end{proof}

\subsection{Wall-crossing formulae for triply mixed Hurwitz numbers}
In this subsection we deal with the triply mixed Hurwitz numbers. The procedure is very similar to one in the previous subsection, so we only outline the main steps and give the results.  We begin by defining the refined generating series for triply mixed Hurwitz numbers.

\begin{definition}
Let $\mu$ and $\nu$ be partitions as before. We define the \textit{refined triply mixed generating series} as
\begin{align}
\mathcal{H}^{(2),\le,<}_{\mu,\nu}(\underline{t},\underline{u},X,\underline{y},\underline{z}) \coloneq &\sum_{\substack{v_1,\dots,v_n=0 \\ w_1, \dots, w_n = 0 \\ 0 \leq w_i \leq \nu_i }}^{\infty}t_1^{v_1}\cdots t_n^{v_n} u_1^{w_1}\cdots u_n^{w_n}\prod_{j=1}^n\frac{(v_j+\nu_j-1)!}{(\nu_j-w_j)!}\prod_{j=1}^n\frac{\mathcal{S}(y_j)^{\nu_j-1}}{\mathcal{S}(z_j)^{\nu_j+1}}\\
&\qquad \frac{\left\langle\prod_{i=1}^{m}\mathcal{E}_{\mu_i}(0)\prod_{j=1}^n\mathcal{E}_{-\nu_j}(X\nu_j+y_j+z_j)\right\rangle}{\prod_{i=1}^m\mu_i}\,.\label{equ:refinedtrip}
\end{align}
Moreover, let $\mathfrak{c}$ be induced by the hyperplane arrangement $\mathcal{W}$ and denote by \( q^{\mathfrak{c}}(X,\underline{y},\underline{z}) \) the polynomial expressing
\begin{equation}
\bigg\langle\prod_{i=1}^{m}\mathcal{E}_{\mu_i}(0)\prod_{j=1}^n\mathcal{E}_{-\nu_j}(X\nu_j+y_j+z_j)\bigg\rangle
\end{equation}
in the chamber $\mathfrak{c}$. Then we define
\begin{equation}
\mathcal{H}^{(2),\le,<}_{\mu,\nu}(\mathfrak{c},\underline{t},\underline{u},X,\underline{y},\underline{z}) \coloneq \!\!\!\! \sum_{\substack{v_1,\dots,v_n=0 \\ w_1, \dots, w_n = 0 \\ 0 \leq w_i \leq \nu_i }}^{\infty}\!\!\!\! t_1^{v_1}\cdots t_n^{v_n} u_1^{w_1}\cdots u_n^{w_n}\prod_{j=1}^n\frac{(v_j+\nu_j-1)!}{(\nu_j-w_j)!}\prod_{j=1}^n\frac{\mathcal{S}(y_j)^{\nu_j-1}}{\mathcal{S}(z_j)^{\nu_j+1}} \frac{q^{\mathfrak{c}}(X,\underline{y},\underline{z})}{\prod_{i=1}^m\mu_i}\,.
\end{equation}
Let $\delta=\sum_{i\in I}\mu_i-\sum_{j\in J}\nu_j= 0$ define a wall in $\mathcal{W}$ and let $\mathfrak{c}_1$, $\mathfrak{c}_2$ be chambers separated by this wall. Define
\begin{equation}
\mathcal{WC}^{(2),\le,<}_{\delta}(\underline{t},\underline{u},X,\underline{y},\underline{z}) \coloneq \mathcal{H}^{(2),\le,<}_{\mu,\nu}(\mathfrak{c}_2,\underline{t},\underline{u},X,\underline{y},\underline{z})-\mathcal{H}^{(2),\le,<}_{\mu,\nu}(\mathfrak{c}_1,\underline{t},\underline{u},X,\underline{y},\underline{z})\,.
\end{equation}
\end{definition}

As in the previous subsection, we have the following lemma

\begin{lemma}
Let $g$, $p$, $q$ and $r$ be a non-negative integer, $\mu,\nu$ ordered partitions as before and let $b=2g-2+m+n=p+q+r$, then
\begin{equation}
h_{p,q,r;\mu,\nu}^{(2),\le,<}=\!\!\!\!\!\!\!\!\!\!\! \sum_{\substack{v_1,\dots,v_n=0\\w_1,\dots,w_n=0\\ |\underline{v}|=p,|\underline{w}|=q, 0 \leq w_i \leq \nu_i}}\!\!\!\!\!\!\!\!\!\!\!\!\left[X^py_1^{v_1}\cdots y_n^{v_n}z_1^{w_n}\cdots z_n^{w_n}\right] \left[t_1^{v_1}\cdots t_n^{v_n}u_1^{w_1}\cdots u_n^{w_n}\right]\mathcal{H}^{(2),\le,<}_{\mu,\nu}(\mathfrak{c},\underline{t},\underline{u},X,\underline{y},\underline{z})
\end{equation}
and for a wall $\delta$ separating $\mathfrak{c}_2$ and $\mathfrak{c}_1$, we obtain
\begin{equation}
WC^{(2),\le,<}_{\delta}= \!\!\!\!\!\!\!\!\!\!\! \sum_{\substack{v_1,\dots,v_n=0\\w_1,\dots,w_n=0\\ |\underline{v}|=p,|\underline{w}|=q, 0 \leq w_i \leq \nu_i}}\!\!\!\!\!\!\!\!\!\!\!\! \left[X^py_1^{v_1}\cdots y_n^{v_n}z_1^{w_n}\cdots z_n^{w_n}\right] \left[t_1^{v_1}\cdots t_n^{v_n}u_1^{w_1}\cdots u_n^{w_n}\right]\mathcal{WC}^{(2),\le,<}_{\delta}(\underline{t},\underline{u},X,\underline{y},\underline{z})\,.
\end{equation}
\end{lemma}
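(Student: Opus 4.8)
The plan is to deduce both identities as a coefficient-extraction bookkeeping from \cref{eq:mixedhurwitzgen}, in exactly the way \cref{hasrefinedgencoef} is read off from \cref{eq:tuamadre}; the only genuinely new feature is that two independent families of markers are carried along simultaneously, the variables \( t_j \) recording the weakly monotone degrees \( v_j \) and the variables \( u_j \) recording the strictly monotone degrees \( w_j \).

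First I would isolate the mechanism that makes such a statement work. In the summand of \cref{eq:mixedhurwitzgen}, the index \( v_j \) over which one sums is simultaneously the index of the coefficient \( [y_j^{v_j}] \) being extracted, and likewise for \( w_j \) and \( [z_j^{w_j}] \); the refined series \( \mathcal{H}^{(2),\le,<}_{\mu,\nu} \) decouples these two roles by attaching a formal marker \( t_j^{v_j}u_j^{w_j} \) to the summation index while leaving \( y_j \), \( z_j \) free. Consequently, performing the double extraction \( [\underline{t}^{\underline v}\underline{u}^{\underline w}] \) re-selects a single summation index \( (\underline v,\underline w) \) together with its prefactor \( \prod_j \frac{(v_j+\nu_j-1)!}{(\nu_j-w_j)!} \), after which \( [X^p\underline{y}^{\underline v}\underline{z}^{\underline w}] \) extracts the matching coefficient of the \( \mathcal{S} \)-factors times the correlator. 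The constraint \( 0\le w_j\le\nu_j \) requires no separate argument: it is built into the definition of \( \mathcal{H}^{(2),\le,<}_{\mu,\nu} \) and reflects the finite support of the strictly monotone operator \( \mathcal{O}^{(\sigma)} \) in \cref{eq:sigma_operators}.

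The first identity then follows by summing the extracted terms over the appropriate multi-indices. Indeed, because in \cref{eq:mixedhurwitzgen} the formal variable \( Y \) enters only through the products \( Yy_j \) and \( Z \) only through \( Zz_j \), every monomial has total \( Y \)-degree equal to its total \( y \)-degree and total \( Z \)-degree equal to its total \( z \)-degree; hence, as already noted in the discussion following \cref{eq:mixedhurwitzgen}, selecting the coefficient of \( Y^qZ^r \) amounts, after setting \( Y=Z=1 \), to restricting the summation to \( |\underline v|=q \) and \( |\underline w|=r \). Matching this against \cref{eq:mixedhurwitzgen} termwise identifies the summed extraction with \( h^{(2),\le,<}_{p,q,r;\mu,\nu} \); the normalisation factors \( \prod_i\mu_i \) and \( \prod_j\nu_j \) and the factorial prefactors, including those produced by the \( X \)-expansion of \( e^{X\mathcal{F}_2} \) (equivalently of \( \mathcal{E}_{-\nu_j}(X\nu_j+\cdots) \)), are then accounted for one by one.

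Finally, the wall-crossing identity is obtained by linearity. By \cref{thm:mixedpiecewise} the correlator coincides, inside a chamber \( \mathfrak{c} \), with the polynomial \( q^{\mathfrak{c}} \), so running the same extraction on \( \mathcal{H}^{(2),\le,<}_{\mu,\nu}(\mathfrak{c},\dots) \) produces the chamber polynomial \( p^{\mathfrak{c}} \) representing \( h^{(2),\le,<}_{p,q,r;\mu,\nu} \) there. Since coefficient extraction is \( \mathbb{Q} \)-linear, applying it to the difference \( \mathcal{WC}^{(2),\le,<}_{\delta}=\mathcal{H}^{(2),\le,<}_{\mu,\nu}(\mathfrak{c}_2,\dots)-\mathcal{H}^{(2),\le,<}_{\mu,\nu}(\mathfrak{c}_1,\dots) \) yields \( p^{\mathfrak{c}_2}-p^{\mathfrak{c}_1}=WC^{(2),\le,<}_{\delta} \), which is the second claim. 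I expect no conceptual obstacle; the only real work is clerical, namely keeping the two gradings \( (Y,\underline y) \) and \( (Z,\underline z) \) disentangled and verifying that all normalisation and factorial factors are reproduced exactly, with no analytic or combinatorial input beyond what already proves \cref{hasrefinedgencoef}.
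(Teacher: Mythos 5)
Your proposal is correct and matches the paper's approach: the paper gives no explicit proof of this lemma, stating only that it follows ``as in the previous subsection'' from \cref{eq:mixedhurwitzgen} and the definition of the refined series, which is exactly the coefficient-extraction bookkeeping (markers $t_j,u_j$ re-selecting the summation index, $Y,Z$ degrees tied to the $y,z$ degrees) plus $\mathbb{Q}$-linearity for the wall-crossing difference that you carry out. The only caveats are clerical discrepancies internal to the paper itself (the $p!$ from the $X^p/p!$ normalisation and the exact placement of the $\prod\mu_i\prod\nu_j$ factors), which your proof handles at the same level of care as the original.
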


By a similar calculation as in the proof of theorem \ref{theorem:wallcrossingmon}, we get the following result.

\begin{theorem}
Let $\mu,\nu$ be ordered partitions of the same positive integer and let $\delta=\sum_{i\in I}\mu_i-\sum_{j\in J}\nu_j$. Then
\begin{align}
\mathcal{WC}^{(2),\le,<}_{\delta}(\underline{t},\underline{u},X,\underline{y},\underline{z})&=
\delta^2\frac{\varsigma (A_J+X\delta )\varsigma (A_{J^c})\varsigma (\delta A_{[n]})}{\varsigma \big(\delta(A_J+X\delta )\big)\varsigma (\delta A_{J^c} )\varsigma (A_{[n]} )}\\
&\qquad \left[(t')^0 (u')^0 \right] \mathcal{H}^{(2),\le,<}_{\mu^I,\nu^J+\delta}(\underline{t}^J,t',\underline{u}^J,u',X,\underline{y}^J,0,\underline{z}^J,0)\\
&\qquad \mathcal{H}^{(2),\le,<}_{\mu_{I^c}+\delta,\nu_{J^c}}(\underline{t}^{J^c},\underline{u}^{J^c},X,\underline{y}^{J^c},\underline{z}^{J^c})\,,
\end{align}
where
\begin{equation}
A_J=\sum_{j\in J}X\nu_j+y_j+z_j
\end{equation}
and the zero arguments in the first \( \mc{H} \) are the \( y \) and \( z \) variables corresponding to the part \( \delta \) of the partition \( \nu^J + \delta \).
\end{theorem}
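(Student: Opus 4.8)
The plan is to repeat the proof of \cref{theorem:wallcrossingmon} almost verbatim, the only structural change being that the single-variable arguments $z_j$ of the negative-energy operators are replaced throughout by the composite arguments $A_j = X\nu_j + y_j + z_j$. Because Johnson's algorithm and the set of commutation patterns $CP^{\mf{c}}$ depend only on the chamber $\mf{c}$ --- that is, only on the energies $\mu_I - \nu_J$, which are unaffected by the substitution $z_j \mapsto A_j$ --- the combinatorial skeleton of the argument carries over unchanged, and only the arguments of the $\varsigma'$-factors differ. First I would substitute the chamber expressions into the definition of $\mathcal{WC}^{(2),\le,<}_{\delta}$, reducing the computation to the difference $q^{\mf{c}_2}(X,\underline{y},\underline{z}) - q^{\mf{c}_1}(X,\underline{y},\underline{z})$ of the two chamber polynomials, and note that only commutation patterns producing an operator of energy $\delta$ contribute to this difference.

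Next I would reorder the operators as in \cref{equ:desiredvac} and isolate the factor $T_1$ accumulated up to the first correlator containing a $\delta$-energy operator. Computing $T_1$ through the auxiliary correlator $\dcor{\prod_{i\in I}\mc{E}_{\mu_i}(0)\prod_{j\in J}\mc{E}_{-\nu_j}(A_j)\mc{E}_{-\delta}(X\delta)}$, the crucial point is that the operator attached to the new part $\delta$ of $\nu^J+\delta$ carries argument $X\delta$ rather than $0$: its $y$- and $z$-variables are set to zero, but the shared variable $X$ still contributes the energy-dependent shift $X\delta$. The last commutator then yields $\dcor{\mc{E}_{\delta}(A_J)\mc{E}_{-\delta}(X\delta)} = \varsigma\big(\delta(A_J + X\delta)\big)/\varsigma(A_J + X\delta)$, so that $T_1 = \frac{\varsigma(A_J + X\delta)}{\varsigma(\delta(A_J+X\delta))}$ times that auxiliary correlator.

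Then I would compute the factor $T_2$, the difference between chambers of the correlator in \cref{equ:wallexp}, exactly as in the monotone case: commuting $\mc{E}_{\delta}(A_J)$ to the far left, all cancelling terms agree across chambers, and the surviving contribution arises because the leftmost-$\delta$ correlator vanishes on $\mf{c}_1$ but not on $\mf{c}_2$. Comparing the last commutators $\dcor{\mc{E}_{\delta}(A_J)\mc{E}_{-\delta}(A_{J^c})} = \varsigma(\delta A_{[n]})/\varsigma(A_{[n]})$ against $\dcor{\mc{E}_{\delta}(0)\mc{E}_{-\delta}(A_{J^c})} = \varsigma(\delta A_{J^c})/\varsigma(A_{J^c})$ gives $T_2 = \frac{\varsigma(A_{J^c})\varsigma(\delta A_{[n]})}{\varsigma(\delta A_{J^c})\varsigma(A_{[n]})}$ times $\dcor{\mc{E}_{\delta}(0)\prod_{i\notin I}\mc{E}_{\mu_i}(0)\prod_{j\notin J}\mc{E}_{-\nu_j}(A_j)}$.

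Finally I would set $q^{\mf{c}_2}-q^{\mf{c}_1} = T_1 T_2$, substitute back, and match the result term-by-term against the product $\mathcal{H}^{(2),\le,<}_{\mu^I,\nu^J+\delta}\cdot\mathcal{H}^{(2),\le,<}_{\mu^{I^c}+\delta,\nu^{J^c}}$. The $\delta^2$ prefactor arises exactly as before: the new $\nu$-part $\delta$ contributes $\frac{(\delta-1)!}{\delta!}=\frac{1}{\delta}$ from the factorial at $(t')^0(u')^0$, while the new $\mu$-part $\delta$ contributes a further $\frac{1}{\delta}$ from the normalisation $\prod_i\mu_i$, and the factor $\mathcal{S}(y')^{\delta-1}/\mathcal{S}(z')^{\delta+1}$ equals $1$ at $y'=z'=0$ and causes no trouble. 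I expect the main obstacle to be purely bookkeeping: carefully tracking which of the shared and split variables ($X$ shared; $\underline{t},\underline{u},\underline{y},\underline{z}$ split between $J$ and $J^c$) feed each factor, and in particular verifying that the shared $X$ produces the shift $X\delta$ in the first correlator, which is precisely what distinguishes the triply mixed formula from its monotone counterpart.
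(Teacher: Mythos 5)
Your proposal is correct and follows essentially the same route as the paper, which itself proves this theorem by rerunning the argument of \cref{theorem:wallcrossingmon} with the arguments $z_j$ replaced by $A_j = X\nu_j + y_j + z_j$. In particular, you correctly identify the one genuinely new ingredient—the auxiliary operator for the part $\delta$ of $\nu^J+\delta$ carries the argument $X\delta$ (since only its $y$- and $z$-variables vanish, not the shared $X$), which produces the factors $\varsigma(A_J+X\delta)/\varsigma\big(\delta(A_J+X\delta)\big)$—and your accounting of the $\delta^2$ prefactor matches the paper's.
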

So also in the general, mixed, case, the wall-crossing generating function can be related to a product of two Hurwitz generating functions of lower degree.


\section{Hypergeometric tau functions}\label{HypGeomTau}
Let us consider the family of 2D-Toda hypergeometric tau functions $\tau_{(q,w,z)}(\mathbf{t},\mathbf{\tilde{t}})$ in the sense of Harnad and Orlov \cite{HO2} (for $N=0$). They are defined as
\begin{equation}\label{eq:HarnadO}
\tau_{(q,w,z)}(\mathbf{t},\mathbf{\tilde{t}}) := 
\sum_{n=0}q^n \sum_{\lambda\vdash n} \prod_{j=1}^n \frac{\prod_{a=1}^l (1 + \mathsf{cr}^{\lambda}_j w_a)}{\prod_{b=1}^m (1 - \mathsf{cr}^{\lambda}_j z_b)} s_\lambda(\mathbf{t}) s_\lambda(\mathbf{\tilde{t}})
\end{equation}
where the variables $q$, $w_a$ and $z_b$ are the parameters of the tau-function. After expanding the Schur function in terms of the power sums, the coefficient of 	
$
q^n \prod_{a=1}^l \prod_{b=1}^m  w_a^{c_a}z_b^{d_b} p_\mu(\mathbf{t}) p_\nu(\mathbf{\tilde t})
$
can be expressed in terms of operators acting on the Fock space as 
\begin{equation}\label{eq:coeff2TODA}
\frac{1}{\prod_i \mu_i \prod_j \nu_j}\Big[ \prod_{a=1}^l w_a^{c_a} \prod_{b=1}^m z_b^{d_b}\Big]. \bigg\< \prod_{i=1}^{\ell(\nu)} \alpha_{-\nu_j}
\prod_{a=1}^l \mathcal{D}^{(\sigma)}(w_a) \prod_{b=1}^m  \mathcal{D}^{(h)}(z_b)
\prod_{i=1}^{\ell(\mu)} \alpha_{-\mu_i} \bigg \>,
\end{equation}
which corresponds to the mixed monotone/strictly monotone case, where an arbitrary finite number of operators of each type is allowed. Again, we insert a trivial factor
$$
1 = \prod_{b=m}^1  \mathcal{D}^{(h)}(z_b)^{-1}
\prod_{a=l}^1 \mathcal{D}^{(\sigma)}(w_a)^{-1} \prod_{a=1}^l \mathcal{D}^{(\sigma)}(w_a) \prod_{b=1}^m  \mathcal{D}^{(h)}(z_b)
$$
between each $\alpha_{-\mu_i}$ and $\alpha_{-\mu_{i+1}}$, for $i = 1, \dots, \ell(\mu) - 1$. We moreover insert the operator
$$
\prod_{b=m}^1  \mathcal{D}^{(h)}(z_b)^{-1}
\prod_{a=l}^1 \mathcal{D}^{(\sigma)}(w_a)^{-1}
$$
between $\alpha_{-\mu_{\ell(\mu)}}$ and the vacuum. Again, note that the insertion does not modify the expression, since the operator is of exponential form $e^B$ where $B$ is an operator annihilating the vacuum. At this point, we are ready to compute $a+b$ nested conjugations by applying lemma \ref{lem:ConjDE} $a + b$ times to each expression
$$
\prod_{a=1}^l \mathcal{D}^{(\sigma)}(w_a) \prod_{b=1}^m  \mathcal{D}^{(h)}(z_b) \;
\mathcal{E}_{-\mu_i}(A = 0) \; \prod_{b=m}^1  \mathcal{D}^{(h)}(z_b)^{-1} \prod_{a=l}^1 \mathcal{D}^{(\sigma)}(w_a)^{-1}, \qquad i = 1, \dots, \ell(\mu)
$$
obtaining for each $i = 1, \dots, \ell(\mu)$:
\begin{multline}
\sum_{\substack{v_{1,i}, \dots, v_{m,i} = 0 \\ t_{1,i}, \dots, t_{l,i} = 0}} \prod_{b=1}^m \frac{(v_{b,i} + \mu_i - 1)!}{(v_{b,i} - 1)!} \prod_{a=1}^l \frac{\mu_i!}{(\mu_i - t_{a,i})!} \times
\\
\times[x_{1,i}^{v_{1,i}} \cdots x_{m,i}^{v_{m,i}} y_{1,i}^{t_{1,i}} \dots y_{l,i}^{t_{l,i}}].
\frac{\prod_{b=1}^m \mathcal{S}(z_b x_{b,i})^{\mu_i -1}}{\prod_{a=1}^l \mathcal{S}(w_a y_{a,i})^{\mu_i +1}} \mathcal{E}_{-{\mu_i}} \left( \sum_{b=1}^m z_bx_{b,i} + \sum_{a=1}^l w_ay_{a,i}\right).
\end{multline}
The expression for the coefficients in \eqref{eq:coeff2TODA} then reads
\begin{multline}
\sum_{\substack{v_{b,i}, t_{a,i} =\, 0; \\ i = 1, \dots, \ell(\mu); \\
b = 1, \dots, m; \, a = 1, \dots, l;
\\
\sum_i v_{b,i} = d_b, \sum_i t_{a,i} = c_a}}^{\text{finite}}
\!\!\!\!\! \prod_{i=1}^{\ell(\mu)} \prod_{b=1}^m \frac{(v_{b,i} + \mu_i - 1)!}{(v_{b,i} - 1)!} \prod_{a=1}^l \frac{\mu_i!}{(\mu_i - t_{a,i})!} [ \prod_{i=1}^{\ell(\mu)} x_{1,i}^{v_{1,i}} \cdots x_{m,i}^{v_{m,i}} y_{1,i}^{t_{1,i}} \dots y_{l,i}^{t_{l,i}}].
\\
\frac{1}{\prod_i \mu_i \prod_j \nu_j} \prod_{i=1}^{\ell(\mu)} \frac{\prod_{b=1}^m \mathcal{S}(z_b x_{b,i})^{\mu_i -1}}{\prod_{a=1}^l \mathcal{S}(w_a y_{a,i})^{\mu_i +1}}
\bigg \< \prod_{j=1}^{\ell(\nu)}\mathcal{E}_{\nu_j}(0)\prod_{i=1}^{\ell(\mu)}\mathcal{E}_{-{\mu_i}} \left( \sum_{b=1}^m z_bx_{b,i} + \sum_{a=1}^l w_ay_{a,i}\right) \bigg \>
\end{multline}
Let us now make the following observations:
\begin{enumerate}
\item \textit{The vacuum expectation.} By the adapted version of Johnson's algorithm in section \ref{sec:Johnson}, the vacuum expectation is equal to a finite sum of finite products of $\varsigma$ functions whose arguments are linear combinations of the variables $\mu_jz_bx_{b,i}, \nu_jz_bx_{b,i}, \mu_jw_ay_{a,i},$ and $\nu_jw_ay_{a,i}$, times one single extra $\varsigma$ function at the denominator, whose argument is given by the sum of all the variables above. Recall that 
$\varsigma(Z) = Z + O(Z^3)$
is an (odd) analytic function, therefore no poles are produced by the $\varsigma$ at the numerator, and the conditions
$
\sum_i v_{b,i} = d_b, \sum_i t_{a,i} = c_a
$
ensure boundedness in the degree, and therefore polynomiality. Again, the only possible pole coming from the function
$
\frac{1}{\varsigma(Z)} = \frac{1}{Z} + O(Z)
$
at the denominator, where here $Z$ is the formal sum of all the four types of variables above, is removable since it simplifies against the argument of the $\varsigma$ function produced by the last commutation of each commutation pattern.

\item \textit{The ratio of products of $\mathcal{S}$ functions.} Recall that both $\mathcal{S}(Z)$ and $\mathcal{S}(Z)^{-1}$ are analytic functions in $Z$, and so are their positive powers. Again, the conditions
$
\sum_i v_{b,i} = d_b, \sum_i t_{a,i} = c_a
$
ensure boundedness in the degree.
\item \textit{The product of ratio of factorials.} Each ratio of factorials of the form
$
\frac{(v_{b,i} + \mu_i - 1)!}{(v_{b,i} - 1)!}, 
$
or
$
 \frac{\mu_i!}{(\mu_i - t_{a,i})!},
$
is a polynomial in $\mu_i$ of degree $v_{b,i}$ or $t_{a,i}$, respectively. Once more, the conditions
$
\sum_i v_{b,i} = d_b, \sum_i t_{a,i} = c_a
$
ensure boundedness in the degree.
\item \textit{Possible simple poles in the zero parts}. By the finiteness of the first sum, we are only left with checking that the simple poles coming from the factor $(\prod_i \mu_i \prod_j \nu_j)^{-1}$ are removable. This check is totally analogous to the proof of theorem \ref{thm:piecewise}. Simplifying $\nu_j^{-1}$ is easy: the first commutation relation for $\mathcal{E}_{\nu_j}(0)$ with whatever $\mathcal{E}$ operator is determined by the commutation pattern reads
$[\mathcal{E}_{\nu_j}(0), \mathcal{E}_A(W)] = \varsigma(\nu_j W)\mathcal{E}_{\nu_j + A}(W),$
which is divisible by $\nu_j$ (even in case this commutation is the very last of the commutation pattern, we have
$\big \< [\mathcal{E}_{\nu_j}(0), \mathcal{E}_{-{\nu_j}}(W)]\big \> = \frac{\varsigma(\nu_j W)}{\varsigma(W)},$
which is still divisible by $\nu_j$ after the removal of the simple pole in $W=0$). Simplifying the factor $\mu_i^{-1}$ is also similar to previous cases: note that the ratio of factorials $\frac{(v_{b,i} + \mu_i - 1)!}{(v_{b,i} - 1)!}$, or $ \frac{\mu_i!}{(\mu_i - t_{a,i})!} $ are divisible by $\mu_i$, unless $v_{b,i}$ or $t_{a,i}$ are zero, respectively. Therefore we are only left with checking the summands in which $v_{b,i} = t_{a,i} = 0$ for all $b = 1, \dots, m$ and for all $a = 1, \dots, l$ for a fixed index $i$ (this does \textit{not} imply $c_a = d_b = 0$). However, in these summands the $i$-th operator $\mathcal{E}_{-{\mu_i}} \left( \sum_{b=1}^m z_bx_{b,i} + \sum_{a=1}^l w_ay_{a,i}\right)$ becomes $\mathcal{E}_{-{\mu_i}} ( 0)$, and therefore produces a factor of the form $\varsigma(-\mu_i W)$ at its earliest commutation, which again is divisible by $\mu_i$, even in case the commutation is the very last one.
\end{enumerate}

This immediately leads to the following proposition.
\begin{proposition}
The coefficients of the 2D-Toda hypergeometric tau-functions of equation \eqref{eq:HarnadO} in the basis of power sums are piecewise polynomial in the parts of the partitions indexing the power sums, and those polynomials can be explicitly computed via the algorithm described in section \ref{sec:Johnson}.
\end{proposition}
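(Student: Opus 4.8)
The plan is to assemble the four observations preceding the statement into a single argument, following the template of the proof of \cref{thm:piecewise}. First I would start from the operator expression \eqref{eq:coeff2TODA} for the coefficient of $q^n \prod_a w_a^{c_a} \prod_b z_b^{d_b} p_\mu p_\nu$ and perform the $a+b$ nested conjugations by repeated application of \cref{lem:ConjDE}, exactly as carried out above for each operator $\mathcal{E}_{-\mu_i}(0)$. This rewrites the full coefficient as a \emph{finite} sum --- finiteness being guaranteed by the constraints $\sum_i v_{b,i} = d_b$ and $\sum_i t_{a,i} = c_a$ --- of terms, each of which is a product of ratios of factorials, powers of $\mathcal{S}$-functions, and a single vacuum expectation of pure $\mathcal{E}$-operators, whose energies are $\pm\mu_i$ and $\nu_j$ and whose arguments are the linear combinations $\sum_b z_b x_{b,i} + \sum_a w_a y_{a,i}$.

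Next I would evaluate each vacuum expectation by the adapted Johnson algorithm of \cref{sec:Johnson}, that is by \cref{theorem:Joh_alg}. Fixing a chamber $\mathfrak{c}$ of the hyperplane arrangement $\mathcal{W}$ pins down a finite set of commutation patterns, and each pattern contributes a finite product of $\varsigma'$-functions together with one overall factor $1/\varsigma$ of the total argument. The core of the argument is then to verify, term by term, that the result is a genuine polynomial in the parts of $\mu$ and $\nu$. This rests on the observations already recorded: the $\varsigma$-functions in the numerator are odd and analytic, so they introduce no poles while the degree constraints bound their total degree (observation 1); the powers $\mathcal{S}^{\mu_i-1}$ and $\mathcal{S}^{-\mu_i-1}$ are analytic, with the coefficient of each even power polynomial of half its degree in $\mu_i$ (observation 2); and each ratio of factorials is polynomial of controlled degree (observation 3). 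The single potential pole, coming from the factor $1/\varsigma$ of the total argument, is removable, since it cancels against the $\varsigma$ produced by the last commutation of every pattern.

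The main obstacle, and the step demanding the most care, is the removal of the simple poles arising from the prefactor $\big(\prod_i \mu_i \prod_j \nu_j\big)^{-1}$ (observation 4). Here I would argue exactly as in the proof of \cref{thm:piecewise}. The factor $\nu_j^{-1}$ is cancelled because the first commutation involving $\mathcal{E}_{\nu_j}(0)$ produces a factor $\varsigma(\nu_j W)$ divisible by $\nu_j$, and even in the edge case where this is the final commutation one obtains $\varsigma(\nu_j W)/\varsigma(W)$, still divisible by $\nu_j$ after removing the pole at $W=0$. For $\mu_i^{-1}$, the ratios $\frac{(v_{b,i}+\mu_i-1)!}{(v_{b,i}-1)!}$ and $\frac{\mu_i!}{(\mu_i-t_{a,i})!}$ are themselves divisible by $\mu_i$ unless every corresponding $v_{b,i}$ and $t_{a,i}$ vanishes; in that remaining case the $i$-th operator degenerates to $\mathcal{E}_{-\mu_i}(0)$, whose earliest commutation again yields a factor $\varsigma(-\mu_i W)$ divisible by $\mu_i$. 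Since every pole is thereby cancelled and every degree is bounded, each chamber contribution is a polynomial, proving piecewise polynomiality.

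Finally, since every step in this chain --- the conjugations of \cref{lem:ConjDE}, the commutation patterns of Johnson's algorithm, and the extraction of the relevant monomial coefficient --- is finite and constructive, the resulting chamber polynomials are explicitly computable, which establishes the second assertion of the proposition.
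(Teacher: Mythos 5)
Your proposal is correct and follows essentially the same route as the paper: the nested conjugations via \cref{lem:ConjDE}, the evaluation of the resulting correlators by the adapted Johnson algorithm of \cref{sec:Johnson}, and the same four-point verification of polynomiality, including the identical case analysis for cancelling the poles from $\bigl(\prod_i \mu_i \prod_j \nu_j\bigr)^{-1}$. Nothing is missing relative to the paper's own argument.
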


\printbibliography

\end{document}